\providecommand{\norm}[1]{\left\lVert#1\right\rVert}
\newcommand{\weak}{\rightharpoonup}
\newcommand{\be}{\begin{equation}}
\newcommand{\ee}{\end{equation}}
\newcommand{\ba}{\begin{eqnarray}}
\newcommand{\ea}{\end{eqnarray}}
\newcommand{\beq}{\begin{equation}}
\newcommand{\eeq}{\end{equation}}
\renewcommand{\leq}{\leqslant}
\renewcommand{\le}{\leqslant}
\renewcommand{\geq}{\geqslant}
\def \R {\mathbb{R}}
\def \dis {\displaystyle}
\def\beq{\begin{equation}}
\def\eeq{\end{equation}}
\def\ecart{\noalign{\medskip}}
\def\ba{\begin{array}}
\def\ea{\end{array}}
\DeclareMathOperator*{\argmin}{argmin}
\DeclareMathOperator*{\argmax}{argmax}
\numberwithin{equation}{section}
\newtheorem{theo}{Theorem}[section]
\newtheorem{prop}{Proposition}[section]
\newtheorem{cor}{Corollary} [section]
\newtheorem{lemma}{Lemma}[section]
\newtheorem{defi}{Definition}[section]
\newtheorem{hypo}{Hypothesis}[section]
\newtheorem{rem}[defi]{Remark}
\journal{Journal of Differential Equations}
\begin{document}

\begin{frontmatter}



\title{Smooth approximation of feedback laws for infinite horizon control problems with non-smooth value functions}
\author[RICAM,Graz]{Karl Kunisch}
\author[RICAM]{Donato Vásquez-Varas}

\affiliation[RICAM]{organization={Radon Institute for Computational and Applied Mathematics, Austrian Academy of Sciences},
             addressline={Altenbergerstraße 69},
             city={Linz},
             postcode={4040},
             state={Upper Austria},
             country={Austria}}
\affiliation[Graz]{organization={Institute of Mathematics and Scientific Computing, University of Graz},
             addressline={Heinrichstraße 36},
             city={Graz},
             postcode={8010},
             state={Styria},
             country={Austria}}


\begin{abstract}
 In this work the synthesis of  approximate optimal and smooth feedback laws for infinite horizon optimal control problems is addressed. In this regards,  $L^{p}$ type error bounds of the approximating smooth feedback laws are derived, depending on either the $C^1$ norm of the value function or its semi-concavity. These error bounds combined with the existence of a Lyapunov type function are used to prove the existence of an approximate optimal sequence of smooth feedback laws. Moreover, we extend this result to the Hölder continuous case by a diagonalization argument combined with the Moreau envelope. It is foreseen that these error bounds could be applied to study the  convergence of synthesis of feedback laws via data driven machine learning methods. Additionally, we provide an example of an infinite horizon optimal control problem for which the value functions is non-differentiable but Lipschitz continuous. We point out that in this example no restrictions on either the controls or the trajectories are assumed.
\end{abstract}


\begin{keyword}
optimal feedback control and its approximation, Hamilton-Jacobi-Bellman equation, viscosity solutions,  non-differentiability, infinite horizon control problem.
\end{keyword}

\end{frontmatter}

{\em{AMS classification:}}
49L12,   
49J15,   
49N35, 
68Q32,   	
93B52,   	

\section{Introduction}
The construction of feedback laws for infinite horizon control problems is a hard task. The classical approach depends on  solving the Hamilton Jacobi Bellman equation (HJB). Among the wellknown methods for solving the HJB equation we mention finite difference schemes \cite{Bonnans}, semi-Lagrangian schemes \cite{Falcone}, and policy iteration \cite{Alla, Beard, Puterman, Santos}. However, it is known that in general these approaches suffer from the \emph{curse of dimensionality}. In the last years  significant  efforts have been dedicated to overcome this difficulty by using machine learning and deep learning techniques. For instance we can mention the following contributions:  representation formulas \citep{Chow1,Chow2,Chow3,DarbonOsher}, approximating the HJB equation by neural networks \citep{Han,Darbon,Nusken,Onken,Ito,KW1,Ruthotto,chen2023deep,zhao2023offline}, data driven approaches \citep{Nakamura1,Nakamura2,AzKaKK,Kang,Albi,ehring}, max-plus methods \citep{Akian,Gaubert,Dower}, polynomial approximation \citep{Kalise1,Kalise2}, tensor decomposition
methods \citep{Horowitz,Stefansson,Gorodetsky,Dolgov,Oster,Oster2}, POD methods \citep{Alla2,KunischVolk}, tree structure algorithms \citep{Alla1}, and sparse grids techniques\citep{BokanowskiGarckeGriebelPo, Garcke, KangWilcox,BokaWarinProst}, see also the proceedings volume \citep{KaliseKuRa}. In general, there is no  proof concerning the fact that feedback laws constructed via data driven approaches or the resolution of HJB by neural networks provide optimal controls, unless the value function some smoothness of the value function is required as is the case in \cite{ehring}, where the value functions is supposed to be an element of a reproducing kernel Hilbert space. There is a group of techniques, based  on training feedback laws involving the value function, by  minimizing the average of the objective function of the control problem over a set of initial conditions \cite{KuVaWal,KW1,KW2,KuVa,BokaWarinProst}. For these approaches, convergence results have been provided under the assumption that the value function of the control problem is $C^{1,1}$ and the optimal states are bounded. However, these assumptions can be weakened by assuming the existence of a sequence of smooth feedback's converging  in such a manner  that the states generated by these feedback laws remain bounded. In this work, we provide conditions which ensure the existence of such sequences of feedback laws. We distinguish four cases depending on whether the value function is $C^{1}$, semi-convex, semi-concave, or $\alpha$-Hölder continuous with $\alpha\in \left(\frac{1}{2},1\right]$. For each of them we also assume the existence of an appropriately chosen  Lyapunov function. For this purpose  we derived $L_p$ type error bounds with respect to the optimality of smooth feedback laws which could also be used to study the convergence of feedback laws provided by data driven approaches.

The structure of this work is as follows. In \Cref{Sec:ControlTheory} we present the control problem and some important concept of the dynamical programming approach for synthesizing feedback laws. In \Cref{Sec:PrelResult} we introduce  preliminary results on semi-concave functions, viscosity solutions, and their regularization by means of the  Moreau envelope. In \Cref{Sec:DefHyp} we introduce  necessary definitions and we present the main assumption about the stabilizability   by feedback control.
  In \Cref{MainResults} we state our main results which are the construction of a sequence of feedback laws such that the state given by this approximation is bounded and the evaluation of the objective function in the obtained controls converges to the value function. For the proofs we refer to later sections, mainly to section \ref{Convergence result}.
\Cref{Sec:ErrorEs} is devoted to obtaining estimates on the error between the value function and the evaluation of the objective function of the control problem, assuming that the state is bounded. In \Cref{Sec:StabilityEst}, a result for the stability of the trajectories of approximations of feedback laws constructed by using an approximation of the value function is provided.   The final section is devoted to the detailed description of an example for an optimal control problem which admits at least two globally optimal solutions for initial conditions in an appropriately chosen subset of the state space. As a consequence it will follow that the value function is not $C^1$ on this set of initial conditions. However, all the assumption for the applicability of the presented results can still be verified.

\section{Control theory and dynamic programming}
\label{Sec:ControlTheory}
We consider the following control problem
\beq
 (P) \quad\min_{u} J(y_0,u):=\int_{0}^{\infty}\left(\ell(y)+\frac{\beta}{2}|u|^{2}\right) dt\label{ControlProblem}\eeq
\beq s.t.\quad \dot{y}(t)=f(y(t))+B(y(t))u(t),\quad y(0)=y_0,  \label{OpenLoop}
\eeq
where $T>0$, $\ell \in Lip_{loc}(\R^{d})$ is bounded from below by 0, with $\ell(0)=0$, $y_0\in \R^{d}$, $\beta>0$, $u\in L^2((0,\infty); \R^m)$, $f\in Lip_{loc}(\R^{d};\R^{d})$ with $f(0)=0$, and $B\in Lip_{loc}(\R^{d};\R^{d\times m})$. We define the associated value function by
\beq V(y_0)=\min_{u\in L^{2}((0,\infty);\R^{m})} J(y_0,u).\eeq
It is known that $V$ satisfies the dynamic programming principle, i.e., for all $T>0$:
\beq
V(y_0)=\min_{u\in L^{2}((0,T);\R^{m})} \left\{\int_{0}^{T}\left(\ell(y(\cdot;y_0))+\frac{\beta}{2}|u|^2\right)dt+V(y(T;y_0))\right\},
\label{DynamicProgrammic}
\eeq
where $y(\cdot;y_0)$ is the solution of \eqref{OpenLoop}. Further, if problem \eqref{ControlProblem} has a solution for all $y_0\in \Omega$ with $\Omega\subset \R^d$ open, then $V$ satisfies the Hamilton-Jacobi-Bellman equation $\mathcal{H}(y,\nabla v(y))=0$ in the viscosity sense,
where
$$\mathcal{H}(y,p)= \max_{u\in \R^{M}} H(y,p,u),
$$
and
\beq H(y,p,u)= \left\{-p\cdot (f(y)+B(y)u)-\ell(y)-\frac{\beta}{2}|u|^2\right\}. \label{Hamiltonian}\eeq
Thus, in the case that the cost is quadratic  in $u$ and the control enters into \eqref{OpenLoop} in an affine manner,  the HJB equation becomes
\beq
\mathcal{H}(y,\nabla v(y))=-\ell(y)+\frac{1}{2\beta}|B^{\top}(y)\nabla v(y)|^2-\nabla v(y)^{\top}f(y)=0\mbox{ in }\Omega. \label{HJBeq}\eeq
The following properties of $H$  and $\mathcal{H}$ will be important in the upcoming sections:
 \beq
 \begin{array}{l}
      \dis \left|H(y,p_1,u_1)-H(y,p_2,u_2)\right|\leq \frac{\beta}{2}|u_1+u_2|\cdot|u_1-u_2|+|p_1|\cdot|B(y)|\cdot|u_1-u_2|+|f(y)|\cdot |p_1-p_2| \\
      \ecart\dis +|u_2|\cdot |B(y)|\cdot |p_1-p_2|,
 \end{array}
 \label{Hamiltonian:prop}\eeq
  \beq
 \begin{array}{l}
      \dis \left|\mathcal{H}(x,p_1)-\mathcal{H}(y,p_1)\right|\leq \left(\norm{\ell}_{Lip(\Omega)}+\frac{1}{\beta}\norm{B}_{Lip(\Omega;\R^{m\times d})}^2|p_1|^2+\norm{f}_{Lip(\Omega;\R^{d})}|p_1|\right)|x-y|,
 \end{array}
 \label{Hamiltonian:prop:2}\eeq

 for all $u_1,u_2\in\R^{M}$, $ p_1,p_2\in\R^{d}$, and $x,y\in \Omega$, where we use the following definition for $\norm{B}_{Lip(\Omega;\R^{m\times d})}$
 \beq \norm{B}_{Lip(\Omega;\R^{m\times d})}=\sup_{\begin{array}{c}
      x,y\in\Omega,\\ i\in 1,\ldots m,\\ j\in 1,\ldots, d
 \end{array}}\frac{|B_{i,j}(x)-B_{i,j}(y)|}{|x-y|}+|B_{i,j}(x)-B_{i,j}(y)|\eeq

 Additionally, if $V$ is smooth, then $V$ is a classical solution of the  HJB equation and this allows us to construct a feedback law. Namely, if for an initial condition $y_0\in \Omega$ the optimal solution of \eqref{ControlProblem} is such that the respective state $y^{*}$satisfies $y^{*}(t)\in\Omega$ for all $t\in(0,\infty)$, then we can define a feedback law by
\beq
u^{*}(y)= \argmax_{u\in R^{M}} H(y,\nabla V(y),u)\label{OptimalFeedbackEq}
\eeq
along $y=y^*(t)$,
where $y^{*}$ solves the closed loop problem
\beq \frac{d}{dt}y^{*}(t)=f(y^{*}(t))+B(y^{*}(t))u^{*}(y^{*}(t)),\quad y^{*}(t_0)=y_0. \label{closedloop1}\eeq
This implies that once we have solved the HJB equation, we can obtain an optimal feedback law by using \eqref{OptimalFeedbackEq}. However, in general the value function is not differentiable.
 But we will see in the following sections that it is possible to construct a feedback such that $J$ evaluated along the controls provided by this feedback is close to the value function.
We will explain this in more detail in \Cref{Sec:DefHyp}. First we need some preliminary results which are given in the next section. Throughout $V$ denotes the value function of \eqref{ControlProblem}. We assume that it is a continuous function in $\R^d$.

\section{Preliminary results}
\label{Sec:PrelResult}
In this section we provide  results that we will need throughout the article. We start by recalling the definition of a viscosity solution and some results concerning its regularity. Then, we present a definition of semi-concave functions together with some of their useful properties. All the results and definitions in this section can be found in \cite[Chapters 1 and 2]{FlemSon}, \cite[Chapters~1, 2 and 3]{Cannarsa2004} and \cite[Chapter~2]{Bardi1997}, except for those concerning the Moreau envelope of a Hölder function. Throughout  $\Omega\subset\R^{d}$ denotes  an open.

\begin{defi}[Viscosity solution, {see \cite[Chapter 2, Definition 1.1]{Bardi1997}}]
\label{ViscositySol}
Let $F\in C(\Omega\times\R^d)$. We say that $v\in C(\Omega)$ is a sub-solution of
\beq F(y,\nabla v(y))=0 \mbox{ in }\Omega \label{ViscositySol:eq1}\eeq
in the viscosity sense if for all $\bar{y}\in \Omega$  and $\phi\in C_{loc}^{1}(\Omega)$ such that $v-\phi$ attains a local maximum at $\bar{y}$ it holds that
\beq  F(\bar{y},\nabla \phi(\bar{y}))\leq 0. \label{ViscositySol:eq2}\eeq
Analogously, we say that $v\in C(\Omega)$ is super-solution of
\eqref{ViscositySol:eq1} in the viscosity sense if for all $\bar{y}\in \Omega$  and $\phi\in C_{loc}^{1}(\Omega)$ such that $v-\phi$ attains a local minimum at $\bar{y}$ it holds that
\beq F(\bar{y},\nabla \phi(\bar{y}))\geq 0. \eeq
We say that $v$ is a viscosity of \eqref{ViscositySol:eq1} if it is a sub and a super solution of \eqref{ViscositySol:eq1}.
\end{defi}

\begin{prop}[{see \cite[Chapter 2, Proposition 1.9]{Bardi1997}}]
    Let $F\in C(\Omega\times\R^d)$. If $v$ is a viscosity solution of \eqref{ViscositySol:eq1}
    which is differentiable at $\bar{y}$, then $F(\bar y,\nabla v(\bar y))=0$. Further, if $v$ is locally Lipschitz in $\Omega$, then
     $$ F(y,\nabla v(y))=0 \mbox{ a.e. in }\Omega.$$
\end{prop}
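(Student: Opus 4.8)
The plan is to reduce the first assertion to the construction, at the point of differentiability $\bar y$, of admissible $C^1$ test functions whose gradient at $\bar y$ equals $\nabla v(\bar y)$, and then to invoke the two halves of \Cref{ViscositySol} separately. Concretely, I would establish $F(\bar y,\nabla v(\bar y))\le 0$ from the sub-solution property and $F(\bar y,\nabla v(\bar y))\ge 0$ from the super-solution property, and combine these into equality. The second assertion then follows at once from the first together with Rademacher's theorem.

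For the upper bound, introduce the first-order remainder
\[ R(y)=v(y)-v(\bar y)-\nabla v(\bar y)\cdot(y-\bar y), \]
which by differentiability satisfies $R(y)=o(|y-\bar y|)$; equivalently, the nondecreasing modulus $\omega(r)=\sup_{0<|y-\bar y|\le r}|R(y)|/|y-\bar y|$ tends to $0$ as $r\to 0^{+}$. The key step is to build a $C^1$ radial correction dominating $R$. I would choose a continuous nondecreasing $\eta$ with $\eta(0)=0$ and $\eta(s)\ge 2\omega(2s)$ (a routine continuous majorant of the monotone modulus), set $g(r)=\int_0^r\eta(s)\,ds$, and define
\[ \phi(y)=v(\bar y)+\nabla v(\bar y)\cdot(y-\bar y)+g(|y-\bar y|). \]
Then $\phi\in C^1_{loc}(\Omega)$, and $\nabla\phi(\bar y)=\nabla v(\bar y)$ because $g'(0)=\eta(0)=0$. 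Moreover, writing $r=|y-\bar y|$, monotonicity of $\eta$ gives $g(r)\ge \tfrac{r}{2}\,\eta(r/2)\ge r\,\omega(r)\ge R(y)$ for $r$ small, so that $v-\phi$ attains a local maximum at $\bar y$. The sub-solution part of \Cref{ViscositySol} then yields $F(\bar y,\nabla\phi(\bar y))=F(\bar y,\nabla v(\bar y))\le 0$. Applying the same construction to $-R$ produces a $C^1$ test function touching $v$ from below with the same gradient, so the super-solution part gives $F(\bar y,\nabla v(\bar y))\ge 0$; combining the two inequalities proves $F(\bar y,\nabla v(\bar y))=0$.

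For the final statement, a locally Lipschitz $v$ is differentiable at every point of $\Omega$ outside a set of Lebesgue measure zero, by Rademacher's theorem. At each such point the first assertion gives $F(y,\nabla v(y))=0$, hence the equation holds almost everywhere in $\Omega$.

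I expect the main obstacle to be the construction of the $C^1$ test function with prescribed gradient $\nabla v(\bar y)$: differentiability supplies only the pointwise estimate $R(y)=o(|y-\bar y|)$, and upgrading this to a genuinely $C^1$ (not merely Lipschitz) radial majorant with vanishing derivative at the origin requires smoothing the monotone modulus $\omega$ while preserving the domination $g(r)\ge r\,\omega(r)$. Everything else — the symmetric passage to $-R$ for the lower bound and the measure-theoretic conclusion — is routine once this construction is in place.
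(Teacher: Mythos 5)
The paper does not prove this proposition; it is quoted verbatim from the cited reference (Bardi--Capuzzo-Dolcetta, Ch.~2, Prop.~1.9), so there is no in-paper argument to compare against. Your proof is correct and is essentially the standard one underlying that reference: the explicit $C^1$ radial majorant $g(|y-\bar y|)$ built from a continuous monotone envelope of the $o(|y-\bar y|)$ remainder is exactly how one shows that $\nabla v(\bar y)$ is attained by admissible test functions touching $v$ from above and from below, after which the sub- and super-solution inequalities and Rademacher's theorem finish the argument as you describe.
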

We now give a definition of a semi-concave functions which is suitable for what we are intending. However, it is possible to find more general definitions in the literature.
\begin{defi}[Semi-concave function, {see \cite[Chapter 2, Section 4.2]{Bardi1997}}]
    Let $\Omega$ be convex. A function $v\in C(\Omega)$ is semi-concave in $\Omega$, if there exists a constant $C>0$ such that $x\in\Omega\mapsto v(x)-\frac{C}{2}|x|^{2}$ is concave. If $-v$ is semi-concave we say that $v$ is semi-convex.
\end{defi}

There are several interesting properties of semi-concave functions, however, we only need the following:
\begin{prop}
\label{Simconvexity:c11}
    Let $\Omega$ be convex and $v$ be semi-concave in $\Omega$. Then,
    \begin{enumerate}
        \item (see \cite[Chapter 2, Theorem 2.3.1]{Cannarsa2004}) $v$ is locally Lipschitz continuous in $\Omega$ and $\nabla v\in BV_{loc}(\overline{\Omega})^{d}$.
        \item (see  \cite[Chapter 3, Theorem 3.3.7]{Cannarsa2004}) If additionally $v$ is semi-convex, then $v\in C^{1,1}(\Omega)$.
    \end{enumerate}
\end{prop}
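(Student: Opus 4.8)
The plan is to reduce both assertions to the structure theory of concave functions. From the definition, write $v(x) = g(x) + \frac{C}{2}|x|^{2}$ on the convex set $\Omega$, where $g$ is concave and the quadratic term is smooth. Since local Lipschitz continuity, membership of the gradient in $BV_{loc}$, and $C^{1,1}$ regularity are all stable under adding a $C^{\infty}$ function, it suffices to establish each property for the concave part $g$ (equivalently, one may argue directly with $v$ through the two-sided second-difference estimates recorded below).

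For the first assertion I would invoke two classical facts about a concave function on an open convex set. First, $g$ is locally Lipschitz in the interior: concavity lets one sandwich $g$ on a small ball between affine functions built from its values at finitely many nearby points, which bounds the difference quotients. Second, the gradient of $g$ (which exists a.e.\ by Rademacher) is a monotone decreasing vector field, so that the distributional Hessian $D^{2}g$ is a symmetric matrix of non-positive Radon measures. Since $\nabla g \in BV_{loc}$ means precisely that $D(\nabla g) = D^{2}g$ is a matrix of Radon measures, this yields $\nabla g \in BV_{loc}$. Adding $Cx = \nabla\!\big(\frac{C}{2}|x|^{2}\big)$, which is smooth, gives that $v$ is locally Lipschitz and $\nabla v \in BV_{loc}(\overline{\Omega})^{d}$.

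For the second assertion, semi-convexity supplies the complementary one-sided control. Unwinding both definitions on the convex set $\Omega$ and using $|x+h|^{2}+|x-h|^{2}=2|x|^{2}+2|h|^{2}$ gives, for all $x$ and all small $h$ with $x\pm h\in\Omega$, the two-sided second-difference bound
$$-C'\,|h|^{2} \;\leq\; v(x+h)+v(x-h)-2v(x)\;\leq\; C\,|h|^{2},$$
equivalently $-C' I \leq D^{2}v \leq C I$ in the sense of distributions. The key step is to promote this distributional Hessian bound to an $L^{\infty}$ bound: mollifying $v$ by $v_{\varepsilon}=v*\rho_{\varepsilon}$ preserves the pointwise inequality $-C' I \leq D^{2}v_{\varepsilon} \leq C I$, so the smooth Hessians $D^{2}v_{\varepsilon}$ are uniformly bounded in $L^{\infty}_{loc}$; passing to the limit identifies $D^{2}v\in L^{\infty}_{loc}$, that is, $v\in W^{2,\infty}_{loc}(\Omega)$. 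On the convex domain $\Omega$ this is equivalent to $v\in C^{1,1}(\Omega)$, since a bounded distributional Hessian forces $\nabla v$ to be Lipschitz along segments.

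I expect the main obstacle to be the regularity statement $\nabla g \in BV_{loc}$ for a concave function, whose justification rests on the monotonicity of the gradient and the resulting Hessian measure; and, in the second part, the passage from the distributional inequality $-C' I \leq D^{2}v \leq C I$ to a genuine $L^{\infty}$ Hessian, which is where the mollification argument does the real work. Both points are standard but not entirely routine, and they are exactly the ones covered by the cited results in \cite{Cannarsa2004}.
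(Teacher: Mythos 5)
Your argument is correct, but note that the paper itself offers no proof of this proposition: it is quoted verbatim from the literature, with part 1 attributed to \cite[Chapter 2, Theorem 2.3.1]{Cannarsa2004} and part 2 to \cite[Chapter 3, Theorem 3.3.7]{Cannarsa2004}. Your reduction to the concave part $g=v-\frac{C}{2}|x|^2$, the monotonicity of $\nabla g$ yielding a matrix-valued Radon measure Hessian (hence $\nabla v\in BV_{loc}$), and the promotion of the two-sided distributional bound $-C'I\leq D^2v\leq CI$ to a uniform $L^\infty$ Hessian bound via mollification are exactly the standard arguments underlying those cited theorems, so there is nothing to reconcile with the paper beyond the observation that the authors chose to cite rather than reprove. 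The only cosmetic gap is that you establish $\nabla v\in BV_{loc}(\Omega)^d$ rather than the $BV_{loc}(\overline{\Omega})^d$ written in the statement, but since the semi-concavity constant is global on $\Omega$ the same Hessian-measure bound extends up to the boundary of any compactly contained subdomain, so this is not a substantive issue.
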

It will be necessary to regularize semi-concave function. For this purpose, it is useful to recall the definition of a smooth mollifier and the mollification of a function.
\begin{defi}
    A compactly supported function $\rho:\R^{d}\to \R_{+}$ is a smooth mollifier if it is infinite differentiable and it satisfies
$$\int_{\R^{d}}\rho(x)dx=1,\quad \lim_{\varepsilon\to 0^{+}}\int_{\R^{d}}\phi(x)\rho(x/\varepsilon)\varepsilon^{-d}dx=\phi(0) \mbox{ for all }\phi\in C(\R^{d}).$$
For $\varepsilon>0$ and $\phi\in L_{loc}^{1}(\R^{d})$ we say that $\phi_{\varepsilon}=\phi\ast\rho_{\varepsilon}$ is a mollification of $\phi$, where $\rho_{\varepsilon}(x)=\frac{1}{\varepsilon}\rho(x/\varepsilon)$ and $\rho$ a smooth mollifier.
\end{defi}
The following  property of a mollification of a semi-concave function is a consequence of Proposition 1.3.3 in \cite{Cannarsa2004}:
\begin{prop}
\label{prop:hessbound}
    Let $\Omega$ be convex and bounded, and $v\in C(\Omega)$ be a semi-concave function with constant $C>0$. Let $v_{\varepsilon}$ be a mollification of $v$ and set $\Omega_{\varepsilon}=\{x\in\Omega: dist(x,\partial\Omega)> \varepsilon\}$. Then $v_{\varepsilon}$ is semi-concave with constant $C>0$ in $\Omega_{\varepsilon}$ and  it satisfies
    $$ p^{\top}\nabla^{2} v_{\varepsilon}(x)p\leq C|p|^2\mbox{ for all }x\in \Omega_{\varepsilon}\mbox{ and for all }p\in\R^{d}.$$
\end{prop}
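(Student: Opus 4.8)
The plan is to exploit the definition of semi-concavity directly. Writing $g(x) = v(x) - \frac{C}{2}|x|^{2}$, the hypothesis says precisely that $g$ is concave on $\Omega$, so $v = g + \frac{C}{2}|\cdot|^{2}$ splits into a concave part and a pure quadratic. Since mollification is linear, on $\Omega_\varepsilon$ we have $v_\varepsilon = g\ast\rho_\varepsilon + \big(\tfrac{C}{2}|\cdot|^{2}\big)\ast\rho_\varepsilon$, and the whole argument reduces to two facts: that convolution with $\rho_\varepsilon$ preserves concavity, and that it does not alter the second-order part of the quadratic term.

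First I would show that $g\ast\rho_\varepsilon$ is concave on $\Omega_\varepsilon$. Restricting to $\Omega_\varepsilon$ serves two purposes: it guarantees that for $x\in\Omega_\varepsilon$ the integrand $y\mapsto g(x-y)\rho_\varepsilon(y)$ samples $g$ only at points of $\Omega$ (so $g\ast\rho_\varepsilon$ is well defined and, as $g$ is concave hence locally Lipschitz and locally integrable, smooth there), and, since $\Omega$ is convex, $\Omega_\varepsilon$ is itself convex, so the segment joining any $x_1,x_2\in\Omega_\varepsilon$ stays in $\Omega_\varepsilon$. Then for $\lambda\in[0,1]$, invoking the concavity of $g$ pointwise inside the integral and using $\rho_\varepsilon\geq 0$ with $\int\rho_\varepsilon = 1$,
\beq
(g\ast\rho_\varepsilon)(\lambda x_1 + (1-\lambda)x_2) = \int g\big(\lambda(x_1-y)+(1-\lambda)(x_2-y)\big)\rho_\varepsilon(y)\,dy \geq \lambda (g\ast\rho_\varepsilon)(x_1) + (1-\lambda)(g\ast\rho_\varepsilon)(x_2).
\eeq
Hence $g\ast\rho_\varepsilon$ is concave and smooth on $\Omega_\varepsilon$, which yields $p^{\top}\nabla^{2}(g\ast\rho_\varepsilon)(x)\,p\leq 0$ for every $x\in\Omega_\varepsilon$ and $p\in\R^{d}$.

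Next I would treat the quadratic term. As $\rho_\varepsilon$ is smooth and compactly supported, differentiation passes under the integral, and since $\nabla_{x}^{2}\big(\tfrac{C}{2}|x-y|^{2}\big) = C\,I$ does not depend on $y$,
\beq
\nabla^{2}\Big(\tfrac{C}{2}|\cdot|^{2}\ast\rho_\varepsilon\Big)(x) = \int C\,I\,\rho_\varepsilon(y)\,dy = C\,I \qquad\text{for all } x\in\Omega_\varepsilon.
\eeq
Equivalently, expanding $|x-y|^{2}$ shows that $\big(\tfrac{C}{2}|\cdot|^{2}\big)\ast\rho_\varepsilon$ differs from $\tfrac{C}{2}|x|^{2}$ only by an affine function of $x$. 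Adding the two contributions gives $\nabla^{2}v_\varepsilon(x) = \nabla^{2}(g\ast\rho_\varepsilon)(x) + C\,I$, so $p^{\top}\nabla^{2}v_\varepsilon(x)\,p \leq C|p|^{2}$, which is the claimed bound; moreover, since $v_\varepsilon - \tfrac{C}{2}|\cdot|^{2} = g\ast\rho_\varepsilon + (\text{affine})$ is concave on the convex set $\Omega_\varepsilon$, the function $v_\varepsilon$ is semi-concave there with the same constant $C$.

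The computations are all routine; the only points requiring genuine care are geometric and measure-theoretic bookkeeping rather than hard analysis: verifying that $\Omega_\varepsilon$ is convex and that the support condition on $\rho_\varepsilon$ keeps every sampled point $x-y$ inside $\Omega$ (so that the concavity of $g$ may legitimately be invoked pointwise under the integral), and justifying the interchange of $\nabla^{2}$ with the integral. These are exactly the ingredients packaged in Proposition~1.3.3 of \cite{Cannarsa2004}, so the main task is to assemble them around the explicit quadratic decomposition above.
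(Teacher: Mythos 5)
Your argument is correct and complete: the decomposition $v=g+\tfrac{C}{2}|\cdot|^{2}$ with $g$ concave, the preservation of concavity under convolution on the convex set $\Omega_{\varepsilon}$, and the observation that mollifying the quadratic changes it only by an affine function together give both the semi-concavity of $v_{\varepsilon}$ with the same constant and the Hessian bound. The paper itself gives no proof of this proposition --- it is stated as a consequence of Proposition~1.3.3 in \cite{Cannarsa2004} --- and what you have written is precisely the standard argument behind that cited result, so there is nothing to reconcile.
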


We will derive  error bounds depending  on regularity and structural properties of the value function. We start with the case that $V$ is $C^1$ regular. In the case that the value function is not $C^1$ but semi-convex,
we shall employ a Moreau approximation which is  $C^{1,1}$ regular, so that the
the $C^1$ error bound can be used.
In the semi-concave case we shall make use a mollification, which in view of  Proposition \ref{prop:hessbound} preserves the semi-concavity. For the non semi-concave case  we shall use again mollification  combined with the Moreau envelope.
 In this  way we shall be able to control the blow-up of the semi-concavity constant.

Let us start by recalling  the definition of the Moreau envelope.

\begin{defi}
    \label{MoreauEnv}
     For $\phi\in C(\Omega)\cap L^{\infty}(\Omega)$ and $\lambda>0$ the Moreau envelope of $\phi$ is defined by
    $$ \mathcal{M}_{\lambda}\phi(y)=\inf_{x\in\Omega} \phi(x)+\frac{1}{2\lambda}|x-y|^{2}.$$
\end{defi}
The next proposition summarizes some important features of $\mathcal{M}_{\lambda}$ for $\phi$ bounded and continuous, including that $\mathcal{M}_{\lambda}\phi$ is semi-concave. The results can be found in  \cite{Bardi1997} except for   \eqref{MoreauEnvProp:item0:2} which is contained  in  \cite{Lasry1986}, and \eqref{MoreauEnvProp:item00:1}, which is verified below.
\begin{prop}
\label{MoreauEnvProp}
 Let $\phi\in C(\Omega)\cap L^{\infty}(\Omega)$. For $\lambda >0$ we denote
 $$\Omega^{\lambda}_{\phi}=\left\{x\in \Omega: dist(x,\partial\Omega)\geq 2\lambda^{\frac{1}{2}}\norm{\phi}_{L^{\infty}(\Omega)}^{\frac{1}{2}} \right\}$$
 and for $x\in \Omega$
 $$ \mathcal{A}_{\lambda}\phi(x)=\argmin_{y\in \Omega} \phi (y)+\frac{1}{2\lambda}|x-y|^2.$$
 Then the following statements hold:
\begin{enumerate}[(a)]
    \item (see  \cite[Chapter 2, Lemma 4.11 and Lemma 4.12]{Bardi1997})\label{MoreauEnvProp:item00} For all $x\in \Omega^{\lambda}_{\phi}$ we have $\mathcal{A}_{\lambda}\phi(x)\neq \emptyset$  and for all $y\in \mathcal{A}_{\lambda}\phi(x)$ we have
    \beq |x-y|\leq 2\lambda^{\frac{1}{2}}\norm{\phi}_{L^{\infty}(\Omega)} \label{MoreauEnvProp:item00:0}\eeq
    In addition, if $\phi$ is $\alpha$-Hölder continuous in $\overline{\Omega}$, i.e., there exist $\alpha\in (0,1]$ and $C>0$ such that
    $$ |\phi(x_1)-\phi(x_2)|\leq C|x_1-x_2|^{\alpha}\mbox{ for all }x_{1},x_2\in \overline{\Omega},$$
    then
    \beq |x-y|\leq (2C\lambda)^{\frac{1}{2-\alpha}}.\label{MoreauEnvProp:item00:1}\eeq
    \item (see  \cite[Chapter 2, Lemma 4.12]{Bardi1997})\label{MoreauEnvProp:item0} The function $\mathcal{M}_{\lambda}\phi$ converges to $\phi$ in $C_{loc}(\Omega)$ as $\lambda\to 0^{+}$ and
    \beq\lim_{\lambda \to 0^{+}}\sup_{y\in \mathcal{A}_{\lambda }(x)}\frac{|x-y|^{2}}{2\lambda }=0\mbox{ in }C_{loc}(\Omega).\label{MoreauEnvProp:item0:1}\eeq
    Moreover, if  $\phi$ is $\alpha$-Hölder continuous we have
    \beq |\mathcal{M}_{\lambda}\phi(x)-\phi(x)|\leq C^{\frac{2}{2-\alpha}}2^{\frac{\alpha}{2-\alpha}}\lambda^{\frac{\alpha}{2-\alpha}} \mbox{ for all }x\in \Omega^{\lambda }\phi\label{MoreauEnvProp:item0:2}\eeq
    \item (see  \cite[Chapter 2, Lemma 4.11 and Lemma 4.12]{Bardi1997}) \label{MoreauEnvProp:item1} For every $\lambda>0$ the mapping $\mathcal{M}_{\lambda}\phi$ is  semi-concave  with constant $\frac{1}{\lambda}$ in $\Omega^{\lambda}_\phi$. In particular, for almost every $x\in \Omega^{\lambda}_{\phi}$ we have that $\mathcal{A}_{\lambda}\phi(x)$ is a singleton and
    $$ \nabla \mathcal{M}_{\lambda}\phi(x)=\frac{1}{\lambda}(x-y)\mbox{ where } \mathcal{A}_{\lambda}\phi(x)=\{y\}.$$
    \item (see  \cite[Chapter 2, Proposition 4.13]{Bardi1997})\label{MoreauEnvProp:item2} Consider $F\in C(\Omega\times\R^{d})$ such that for every $R>0$ there exists $h\in C([0,\infty); [0,\infty))$ satisfying $h(0)=0$ and
    $$ |F(y_{1},p)-F(y_{2},p)|\leq h(|y_{1}-y_{2}|\cdot(1+|p|))\mbox{ for all }y_{1},y_{2}\in \Omega\cap B(0,R),\ p\in \R^{d}.$$
    If $\phi$ is a viscosity super-solution of $F(y,\nabla \phi (y))=0$, then
    $$ F(y,\nabla \mathcal{M}_{\lambda}\phi (y))+g_{\lambda}(y)\geq 0 \mbox{ almost everywhere in }\Omega^{\lambda}_{\phi},$$
    where
    $$ g_{\lambda}(x)=\sup_{y\in \mathcal{A}_{\lambda}\phi (x)}  h\left(|x-y|\left(1+\frac{|x-y|}{\lambda}\right)\right).$$
    Further, it holds that
    $$ \lim_{\lambda\to 0^{+}}g_{\lambda}(x)=0\mbox{ in }C_{loc}(\Omega).$$
    \item (see \cite[Chapter 3, Theorem 3.5.3]{Cannarsa2004})\label{MoreauEnvProp:item3} If $\phi$ is semi-convex with constant $C>0$, then $\mathcal{M}_{\lambda}\phi$ is  C$^{1,1}(\Omega^{\lambda}_{\phi})$ and convex for all $\lambda\in \left(0,\frac{1}{C}\right)$.
\end{enumerate}
\end{prop}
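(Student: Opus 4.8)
The only part of Proposition \ref{MoreauEnvProp} that requires an argument is the Hölder refinement \eqref{MoreauEnvProp:item00:1}, since the remaining items are quoted verbatim from the cited references. The plan is to exploit the defining variational property of the minimizer together with the observation that $x$ is itself an admissible competitor in the infimum defining the Moreau envelope.

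First I would fix $x\in\Omega^{\lambda}_{\phi}$ and $y\in\mathcal{A}_{\lambda}\phi(x)$, so that by definition of the Moreau envelope
$$\mathcal{M}_{\lambda}\phi(x)=\phi(y)+\frac{1}{2\lambda}|x-y|^{2}.$$
Since $x\in\Omega^{\lambda}_{\phi}\subset\Omega$, the point $z=x$ is admissible in the infimum, which yields the comparison $\mathcal{M}_{\lambda}\phi(x)\le\phi(x)$. Combining the two facts gives
$$\frac{1}{2\lambda}|x-y|^{2}\le\phi(x)-\phi(y).$$
Next I would invoke the $\alpha$-Hölder continuity of $\phi$ on $\overline{\Omega}$ to bound the right-hand side by $C|x-y|^{\alpha}$, obtaining
$$\frac{1}{2\lambda}|x-y|^{2}\le C|x-y|^{\alpha}.$$
If $x=y$ the claimed estimate holds trivially; otherwise I would divide by $|x-y|^{\alpha}>0$ to get $|x-y|^{2-\alpha}\le 2C\lambda$, and since $2-\alpha>0$ the bound \eqref{MoreauEnvProp:item00:1} follows by taking the $(2-\alpha)$-th root.

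There is essentially no serious obstacle here: the argument is a short consequence of the variational characterization of $\mathcal{A}_{\lambda}\phi$ and the Hölder estimate. The only points requiring mild care are verifying that $x$ is indeed an admissible competitor (guaranteed since $x\in\Omega^{\lambda}_{\phi}\subset\Omega$) and treating the degenerate case $x=y$ separately before dividing. It is worth remarking that this refinement interpolates the bound \eqref{MoreauEnvProp:item00:0} obtained under mere boundedness of $\phi$: the exponent $\tfrac{1}{2-\alpha}$ degenerates to $\tfrac12$ as $\alpha\to 0^{+}$ and improves to $1$ as $\alpha\to 1^{-}$, reflecting the additional regularity that Hölder continuity provides.
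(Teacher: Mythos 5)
Your proof of \eqref{MoreauEnvProp:item00:1} is correct and is essentially identical to the paper's own argument: both use that $x$ is an admissible competitor to get $\mathcal{M}_{\lambda}\phi(x)\le\phi(x)$, deduce $\frac{1}{2\lambda}|x-y|^{2}\le\phi(x)-\phi(y)\le C|x-y|^{\alpha}$, and solve for $|x-y|$. Your additional care about the degenerate case $x=y$ and the remark on how the exponent interpolates with \eqref{MoreauEnvProp:item00:0} are fine but not needed beyond what the paper records.
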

As announced above, to verify \eqref{MoreauEnvProp:item00:1} let $x\in\Omega$ and $y\in \mathcal{A}_\lambda \phi(x)$, where we make use of the fact that $\mathcal{A}_\lambda \phi(x)$ is nonempty. Then
\begin{equation*}
\frac{1}{2\lambda}|x-y|^2 = \mathcal M_\lambda \phi(x) - \phi(y) \le \phi (x) - \phi (y) \le C|x-y|^\alpha,
\end{equation*}
from which the desired estimate follows.
The result in \Cref{MoreauEnvProp} (\ref{MoreauEnvProp:item2}) is slightly restrictive. For instance, this result can not be applied to a super-solution of \eqref{HJBeq}, unless we assume Lipschitz continuity of the value function. In the following result we extend this to non-Lipschitz functions by assuming Hölder continuity with an exponent large enough. It will used in the context
approximating the HJB equation as well as for the use of a Lyapunov function to approximate the escape time from the domain of the approximation of the HJB equation.

\begin{prop}
    \label{MoreauEnvProp:HolderConv}
    Let us assume that $\Omega$ is convex and let $F\in C(\Omega\times\R^{d})$ and $q\in [1,\infty)$ be such that for every $R>0$, there exists $h_i\in C([0,\infty);[0,\infty))$, satisfying $h_i(0)=0$, for $i\in \{1,2,3\}$,  and
    \beq  |F(y_1,p)-F(y_2,p)|\leq h_1(|y_1-y_2|)|p|^{q}+h_2(|y_1-y_2|)|p|+h_3(|y_1-y_2|))\mbox{ for all }y_1,y_2\in\Omega\cap B(0,R),\ p\in\R^{d}.\label{MoreauEnvProp:HolderConv:eq1}\eeq
    If $\phi$ is a viscosity super-solution of $F(y,\nabla \phi (y))=0$, then
    \beq F(y,\nabla \mathcal{M}_{\lambda}\phi (y))+g_{\lambda}(y)\geq 0 \mbox{ a.e. in }\Omega^{\lambda}_{\phi},\label{MoreauEnvProp:HolderConv:eq2}\eeq
    where
    \beq g_{\lambda}(x)=\sup_{y\in \mathcal{A}_{\lambda}\phi (x)}  h_1(|x-y|)\frac{|x-y|^{q}}{\lambda^q}+h_2(|x-y|)\frac{|x-y|}{\lambda}+h_3(|x-y|).\label{MoreauEnvProp:glambda}\eeq
    If additionally, $\phi$ is $\alpha-$Hölder continuous  with $\alpha\in \left(1-\frac{1}{q},1\right]$ and there exists $C>0$ such that
    \beq \ h_i(s)\leq Cs,\label{MoreauEnvProp:HolderConv:hhyp} \mbox{ for }s\geq 0 \mbox{ and }i\in \{1,2,3\},\eeq
    then
    \beq \lim_{\lambda\to 0^{+}}g_{\lambda}(x)=0 \mbox{ in }C_{loc}(\Omega).\label{MoreauEnvProp:HolderConv:eq3}\eeq
\end{prop}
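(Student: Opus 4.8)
The plan is to mirror the structure of the classical statement in \Cref{MoreauEnvProp}\,(\ref{MoreauEnvProp:item2}), replacing the single modulus of continuity there by the three-term bound \eqref{MoreauEnvProp:HolderConv:eq1}. The inequality \eqref{MoreauEnvProp:HolderConv:eq2} will be obtained by a pointwise test-function argument that uses \emph{neither} the Hölder hypothesis \emph{nor} \eqref{MoreauEnvProp:HolderConv:hhyp}; these two assumptions enter only in the exponent bookkeeping needed for the limit \eqref{MoreauEnvProp:HolderConv:eq3}.

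First I would establish \eqref{MoreauEnvProp:HolderConv:eq2}. By item (\ref{MoreauEnvProp:item1}) of \Cref{MoreauEnvProp}, $\mathcal{M}_{\lambda}\phi$ is semi-concave on $\Omega^{\lambda}_{\phi}$, hence locally Lipschitz and therefore differentiable at almost every $x\in\Omega^{\lambda}_{\phi}$; fix such a point $x$. By the same item, $\mathcal{A}_{\lambda}\phi(x)=\{y\}$ is a singleton and $\nabla\mathcal{M}_{\lambda}\phi(x)=\frac{1}{\lambda}(x-y)$. By the very definition of $y$, the map $z\mapsto\phi(z)+\frac{1}{2\lambda}|x-z|^{2}$ attains its minimum over $\Omega$ at $y$; equivalently, $\phi-\psi$ has a local minimum at $y$ for the $C^{1}$ test function $\psi(z)=-\frac{1}{2\lambda}|x-z|^{2}$, whose gradient is $\nabla\psi(y)=\frac{1}{\lambda}(x-y)=\nabla\mathcal{M}_{\lambda}\phi(x)$. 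Since $\phi$ is a viscosity super-solution, the super-solution inequality applied at $y$ gives $F(y,\nabla\mathcal{M}_{\lambda}\phi(x))\ge 0$. Setting $p=\nabla\mathcal{M}_{\lambda}\phi(x)$, so that $|p|=|x-y|/\lambda$, and applying \eqref{MoreauEnvProp:HolderConv:eq1} to pass from the base point $y$ to $x$ (the supremum in \eqref{MoreauEnvProp:glambda} reducing to a single value because $\mathcal{A}_{\lambda}\phi(x)$ is a singleton here) yields
\beq F(x,p)\ge F(y,p)-\left(h_1(|x-y|)\frac{|x-y|^{q}}{\lambda^{q}}+h_2(|x-y|)\frac{|x-y|}{\lambda}+h_3(|x-y|)\right)\ge -g_{\lambda}(x), \eeq
which is exactly \eqref{MoreauEnvProp:HolderConv:eq2}.

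For the convergence \eqref{MoreauEnvProp:HolderConv:eq3}, fix a compact $K\subset\Omega$; since $\Omega^{\lambda}_{\phi}$ exhausts $\Omega$ as $\lambda\to 0^{+}$, we have $K\subset\Omega^{\lambda}_{\phi}$ for $\lambda$ small, and for $x\in K$, $y\in\mathcal{A}_{\lambda}\phi(x)$ the Hölder estimate \eqref{MoreauEnvProp:item00:1} gives $|x-y|\le(2L\lambda)^{\frac{1}{2-\alpha}}$, with $L$ the Hölder constant of $\phi$. Combining this with the linear bound \eqref{MoreauEnvProp:HolderConv:hhyp}, each term of $g_{\lambda}(x)$ is controlled by an explicit power of $\lambda$, uniformly in $x\in K$: the third term is $O(\lambda^{\frac{1}{2-\alpha}})$, the second is $O(\lambda^{\frac{2}{2-\alpha}-1})=O(\lambda^{\frac{\alpha}{2-\alpha}})$ (positive exponent since $\alpha>0$), and the first is $O(\lambda^{\frac{q+1}{2-\alpha}-q})$. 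The crux — and the only place the Hölder restriction is genuinely used — is to check that the exponent of this first term is positive; after simplification, $\frac{q+1}{2-\alpha}-q>0$ is equivalent precisely to $\alpha>1-\frac{1}{q}$, which is the assumption imposed in the statement. Once these three uniform bounds are in hand, taking the supremum over $x\in K$ gives $g_{\lambda}\to 0$ uniformly on $K$, i.e. in $C_{loc}(\Omega)$. I expect the exponent arithmetic for the quadratic-type first term to be the main (and essentially the only) substantive point, the remainder being a transcription of the classical super-solution argument.
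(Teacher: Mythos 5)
Your proposal is correct and follows essentially the same route as the paper's proof: the viscosity super-solution inequality is tested at the minimizer $y\in\mathcal{A}_{\lambda}\phi(x)$ with the quadratic test function, giving $F\left(y,\tfrac{1}{\lambda}(x-y)\right)\geq 0$, which is transferred to $x$ via \eqref{MoreauEnvProp:HolderConv:eq1} and combined with the a.e.\ gradient formula for $\mathcal{M}_{\lambda}\phi$; the limit \eqref{MoreauEnvProp:HolderConv:eq3} then follows from \eqref{MoreauEnvProp:item00:1} and the same exponent check $\frac{q+1}{2-\alpha}-q>0\iff\alpha>1-\frac{1}{q}$. The only cosmetic difference is that the paper first derives the inequality for all $y\in\mathcal{A}_{\lambda}\phi(x)$ and only afterwards restricts to points of differentiability, whereas you restrict at the outset; this changes nothing of substance.
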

\begin{proof}
    Let $\lambda>0$ and  $x\in \Omega^{\lambda}_{\phi}$. Then by \eqref{MoreauEnvProp:item00} we have {$\mathcal{A}_{\lambda}\phi(x)\neq\emptyset$}. Since $\phi$ is a super solution of $F(y,\nabla\phi (y))=0$ and $y\mapsto \phi(x)+\frac{1}{2\lambda}|x-y|^{2} $ attains its minimum at $y\in \mathcal{A}_{\lambda}\phi(x)$, we find
    $$F\left(y,\frac{1}{\lambda}(x-y)\right)\geq 0\mbox{ for all }y\in\mathcal{A}_{\lambda}\phi(x).$$
    By using \eqref{MoreauEnvProp:HolderConv:eq1} we have for all $y\in\mathcal{A}_{\lambda}\phi(x)$ the following
    $$ \left|F\left(x,\frac{1}{\lambda}(x-y)\right)-F\left(y,\frac{1}{\lambda}(x-y)\right)\right|\leq   h_1(|x-y|)\frac{|x-y|^{q}}{\lambda^q}+h_2(|x-y|)\frac{|x-y|}{\lambda}+h_3(|x-y|).$$
    These last two inequalities imply that
    \beq  F\left(x,\frac{1}{\lambda}(x-y)\right)+g_{\lambda }(x)\geq 0  \mbox{ for all }y\in\mathcal{A}_{\lambda}\phi(x).\label{MoreauEnvProp:HolderConv:eq4}\eeq
    If $\mathcal{M}_{\lambda }\phi$ is differentiable at $x$, then by \Cref{MoreauEnvProp}(\ref{MoreauEnvProp:item1}) we have that $\mathcal{A}_{\lambda}\phi(x)=\{y\}$ is a singleton and  $\nabla \mathcal{M}_{\lambda }(x)=\frac{1}{\lambda}(x-y)$. Combining this, the almost everywhere differentiability in $\Omega^{\lambda}_{\phi}$ of $\mathcal{M}_{\lambda}\phi$,  and \eqref{MoreauEnvProp:HolderConv:eq4} we get \eqref{MoreauEnvProp:HolderConv:eq2}.

     For \eqref{MoreauEnvProp:HolderConv:eq3}, we notice that if $\phi$ is $\alpha$-Hölder continuous, then by \eqref{MoreauEnvProp:item00:1} there exists a constant $C>0$ such that
    $$ |x-y|\left(1+\frac{|x-y|^{q}}{\lambda^{q}}+\frac{|x-y|}{\lambda}\right)\leq C\left(\lambda^{\frac{1}{2-\alpha}}+\lambda^{\frac{q+1}{2-\alpha}-q}+\lambda^{\frac{\alpha}{2-\alpha}}\right) \mbox{ for all }y\in \mathcal{A}_{\lambda}\phi(x),$$
    where the constant does not depend on $x$. Consider now $\omega\subset\Omega$ compact. There exists $\lambda_{0}>0$ such that for all $\lambda\in (0,\lambda_0)$ we have $\omega\subset\Omega^{\lambda}_{\phi}$. Then, the previous inequality and the fact that $\frac{q+1}{2-\alpha}-q>0$ for $\alpha >1-\frac{1}{q}$, imply that
    $$\lim_{\lambda\to 0^{+}}\sup_{x\in\omega,y\in\mathcal{A}_{\lambda}\phi(x)}|x-y|\left(1+
    \frac{|x-y|^{q}}{\lambda^{q}}+\frac{|x-y|}{\lambda}\right)=0.$$
    This together with condition \eqref{MoreauEnvProp:HolderConv:hhyp} concludes  the proof of \eqref{MoreauEnvProp:HolderConv:eq3}.
\end{proof}

\begin{rem}\label{rem:MoreauEnv:HJB}
We apply Proposition \ref{MoreauEnvProp:HolderConv} with  $F$ given by  the Hamiltonian in $ \mathcal{H}(y,\nabla v(y))=0\mbox{ in }\Omega$, where $v$ will be the
 value function $V\in L^\infty(\Omega)\cap C(\Omega)$ associated to \eqref{ControlProblem} which is a viscosity solution to this equation.
By \eqref{Hamiltonian:prop:2} assumption \eqref{MoreauEnvProp:HolderConv:eq1} is satisfied with $q=2$, $h_1(s)=\frac{1}{\beta}\norm{B}_{Lip(\Omega;\R^{d\times m})}s$, $h_2(s)=\norm{f}_{Lip(\Omega;\R^{d})}s$ and $h_3(s)=\norm{\ell}_{Lip(\Omega)}s$ independently of $R$.
Therefore Proposition \ref{MoreauEnvProp:HolderConv} implies that $\mathcal{M}_{\lambda}V$ is a super-solution of
    \beq \mathcal{H}(y,\nabla v(y))+h_{\lambda }(y)= 0 \mbox{ in }\Omega_V^\lambda, \label{rem:MoreauEnv:HJB:approxeq}\eeq
    with
\beq h_{\lambda }(x) =\sup_{y\in\mathcal{A}_{\lambda}V(x)} \left(\norm{\ell}_{Lip(\Omega)}+\frac{1}{\beta}\norm{B}_{Lip(\Omega;\R^{m\times d})}^2\frac{|x-y|^2}{\lambda^2}+\norm{f}_{Lip(\Omega;\R^{d})}\frac{|x-y|}{\lambda}\right)|x-y|.
\label{rem:MoreauEnv:HJB:g}\eeq
Moreover, assuming that $V$ is $\alpha$-Hölder continuous  with $\alpha\in \left(\frac{1}{2},1\right]$ in $\Omega$,  by \eqref{MoreauEnvProp:item00:1} we can bound $h_{\lambda }$ by
$$h_{\lambda }(x)\leq C\left(\norm{\ell}_{Lip(\Omega)}\lambda^{\frac{1 }{2-\alpha }} +\frac{1}{\beta}\norm{B}_{Lip(\Omega;\R^{m\times d})}^2\lambda^{\frac{2\alpha-1 }{2-\alpha }}+\norm{f}_{Lip(\Omega;\R^{d})}\lambda^{\frac{\alpha }{2-\alpha }}\right)\mbox{ for all }x\in \Omega^{\lambda}_{V},$$
where $C$ only depends on $\alpha$ and the Hölder constant of $V$ on $\Omega$. Additionally, if $\lambda\in (0,1]$, the last inequality implies
$$h_{\lambda }(x)\leq C\left(\norm{\ell}_{Lip(\Omega)} +\frac{1}{\beta}\norm{B}_{Lip(\Omega;\R^{m\times d})}^2+\norm{f}_{Lip(\Omega;\R^{d})}\right)\lambda^{\frac{2\alpha-1 }{2-\alpha }}.$$
In particular $h_\lambda$ tends to $0$ on $\Omega$ with the specified rate.
\end{rem}
\section{Statement of the problem and structural hypotheses}
\label{Sec:DefHyp}
The main purpose of this work is the construction of a sequence of approximating optimal feedback laws and to analyze their convergence. Here we provide the exact problem formulation and state the main structural hypotheses that will be needed.
In the following $\Subset$ denotes strict inclusion.
\begin{hypo}
\label{hypo:scapetime}
    Consider a locally Lipschitz continuous function $u:\Omega\mapsto \R^{m}$,
    $\omega\Subset\Omega$ open and $T>0$. For all $y_0\in\omega$, the triplet {$(u,\omega,T)$} satisfies that  $y(t;y_0,u)\in \Omega$ for all $t\in [0,T]$, where $y(\cdot;y_0,u)$ is the solution of
    \beq
    y'(t)=f(y(t))+B(y(t))u(y(t)),\quad y(0)=y_0.
    \label{closedloopproblem}
    \eeq
\end{hypo}

\begin{defi}
     For $(u,\omega,T)$ satisfying \Cref{hypo:scapetime}, we define $\mathcal{V}_{u,T}$ by
    $$y_0\in\omega\mapsto \mathcal{V}_{u,T}(y_0)=\int_{0}^{T}\left(\ell(y(t;y_0,u))+\frac{\beta}{2}|u(y(t;y_0,u))|^{2}\right)dt.$$
\end{defi}
With this definition, our objective is to prove the existence of a sequence of controls  $u_{n}\in C^{1}(\Omega;\R^{m})$ in feedback form,  and times $T_{n}$ tending to infinity such that \beq \lim_{n\to\infty} \norm{\mathcal{V}_{u_n,T_n}-V}_{L^{p}(\omega)}=0
    \label{rem:valueConv}\eeq for some $p\in [1,\infty]$ depending on the regularity of the value function $V$. One of the motivations to include the time horizon $T$ into the approximation framework, rather  than setting the time horizon to $(0,\infty)$ is motivated, besides intrinsic interest, by the fact that in the context of numerical approximation techniques for the value function $V$, many strategies rely on data for $V$ at sample points $x_i \in \Omega$. These values for $V(x_i)$ would require to solve infinite horizon optimal control problems, which is unfeasible, whereas approximations by finite horizon problems can be obtained.

    The following definition characterizes the choice of the feedback functions which will be made.

\begin{defi}
     For a function $v$ in $C^{1}(\Omega)$, we define $u_v\in C(\Omega;\R^{m})$ by
    \beq u_v(y_0)=-\frac{1}{\beta}B(y_0)^{\top}\nabla v(y_0)\mbox{, for }y_0\in\Omega. \label{uv:def}\eeq
\end{defi}
\begin{rem}
    \label{rem1}
    For $v\in C^{1}(\Omega)$, we notice that \eqref{uv:def} is equivalent to
    \beq u_v(y_0)\in \argmax_{u\in \R^{d}} H(y_0,\nabla v(y_0),u)\mbox{, for }y_0\in\Omega.\label{uv:def:rem}\eeq
\end{rem}

For such a control equation \eqref{OpenLoop} admits a local solution if  $v\in C^{1}(\Omega)$ which is unique if moreover $v \in C^{1,1}(\Omega)$.

To achieve \eqref{rem:valueConv}, we will need a stability hypothesis on the optimal trajectories of \eqref{ControlProblem}-\eqref{OpenLoop}. We express this stability condition in the viscosity sense and make use of it in sections \Cref{Sec:StabilityEst} and  \Cref{Sec:Approx}.

\begin{hypo}\label{Lyapunov}
Let $\omega\Subset\Omega$ be open and $\phi\in C(\Omega)$. There exist $\delta>0$, $w\in C^{1}(\Omega)$, and $g\in C(\Omega)$, bounded from below by 0,  such that $\omega$ is strictly contained in
 $$\omega_{\delta}:=\{y\in \Omega: w(y) < \sup_{y_0\in \omega} w(y_0)+\delta\}.$$
Further, $\omega_\delta$ is strictly contained in $\Omega$, $\partial \omega_\delta$ is of class $C^1$,
and $\phi$ is a viscosity super solution of
$$ -\nabla w(y)^{\top}(f(y)+B(y)u_\phi(y))+ g(y)=0 \mbox{ in }\omega_{\delta},$$
i.e. for every $\bar{y}\in\omega_\delta$ and every $h\in C_{loc}^{1}(\Omega)$ such that $\phi-h$ attains a local minimum at $\bar{y} $ the inequality
$$ \nabla w(\bar{y})^{\top}(f(\bar{y})+B(\bar{y})u_h(\bar{y}))\leq  g(\bar{y})$$
holds.
\end{hypo}

The existence of a Lyapunov-like function $w$ as demanded by Hypothesis \ref{Lyapunov} is discussed in \cite{CLARKE199869}. The condition will be used with $\phi=V$ or an approximation thereof.  The set $\omega_\delta$ is the region where we approximate the value function.
 The regularity  assumption for the boundary  of $\omega_\delta$ holds if $\nabla w \neq 0$ on  $\partial \omega_\delta$.

In the analysis of approximating the escape time from $\omega_\delta$, the following quantities will be required.
 For $\varepsilon>0$ we set
\beq  \sigma^{1}_{\varepsilon}=\sup_{x\in \omega_{\delta},y\in B(x,\varepsilon)} \left|g(y)-\nabla w(y)^{\top} f(y)+\nabla w(x)^{\top} f(x)\right|\label{LyapunovStabLemma:sigma1}\eeq
and
\beq  \sigma^{2}_{\varepsilon}=\sup_{x\in \omega_{\delta},y\in B(x,\varepsilon)} \left|B(x)^{\top}\nabla w(x)-B(y)^{\top}\nabla w(y)\right|.\label{LyapunovStabLemma:sigma2}\eeq
For $g=0$ these quantities tend to zero with $\varepsilon$. We also need
\beq
\sigma^{1}_{\varepsilon,\lambda}=\sup_{x\in \omega_{\delta},y\in B(x,\varepsilon)} \left|g_\lambda(y)-\nabla w(y)^{\top} f(y)+\nabla w(x)^{\top} f(x)\right|\label{LyapunovStabLemma:sigma1,l}
\eeq
where
\beq \begin{array}{l}
     \dis  g_{\lambda}(x)=g(x)+
\sup_{y\in \mathcal{A}_{\lambda}\phi (x)}\Big\{ h_{w}(|x-y|)\left(\norm{f}_{L^{\infty}(\Omega;\R^{d})}+\frac{1}{\beta}\frac{|x-y|}{\lambda}\norm{ B}^2_{L^{\infty}(\Omega;\R^{d\times m})}\right)\\
\ecart\dis+\norm{f}_{Lip(\Omega;\R^{d})}\norm{\nabla w}_{L^{\infty}(\Omega;\R^{d})}|x-y|+\frac{2}{\beta}\frac{|x-y|^{2}}{\lambda}\norm{B}_{Lip(\Omega);\R^{d\times M}}^2\norm{\nabla w}_{L^{\infty}(\Omega;\R^{d})}\Big\},
\end{array}\label{LyapunovStabLemma:glambda}\eeq
and $h_{w}$ is the modulus of continuity of $\nabla w$ in $\omega_{\delta}$, i.e.
$$ |\nabla w(x_{1})-\nabla w(x_{2})|\leq h_{w}(|x_{1}-x_{2}|) \mbox{ for all }x_1,x_2\in\omega_{\delta}.$$

In Remark \ref{rem:lyapunov:conv} we shall provide a sufficient condition guaranteeing that $\lim _{\lambda\to 0^+}g_{\lambda}=g$.

\section{Main results: approximation of viscosity solutions}
\label{MainResults}

Here we state the approximation results which are part of the main  contributions of this paper, under various assumptions on the regularity of the value function $V$.  The proofs will be given in the following sections. We start with $V\in C^1(\Omega)$.

\begin{theo}
\label{C1ConvergenceTheo}
 Let $w\in C^{1}(\overline{\Omega})$, $\omega\Subset\Omega$ open, and $\delta>0$ satisfying \Cref{Lyapunov} with $\phi=V \in C^{1}(\Omega)$ the value function of \eqref{ControlProblem} and $g=0$. Further let $u_{\varepsilon}\in Lip(\Omega)$ be a family of funtions  such that
 \beq\lim_{\varepsilon\to 0^{+}}\norm{B(u_{\varepsilon}-u_V)}_{C(\omega_{\delta};\R^{M})}=0.\label{C1ConvergenceTheo:eq1}\eeq
Choose $\kappa:(0,\infty)\to(0,\infty)$  and $s_0 >0$ such that
 \beq \lim_{s\to 0^{+}}\kappa(s)=\infty \mbox{ and } \kappa(s)s\leq \frac{\delta}{\norm{\nabla w}_{L^{\infty}(\omega_\delta;\R^d)}}\mbox{ for all }s\in (0,s_0),\label{C1ConvergenceTheo:eq2}\eeq
 and  set $\tau_{\varepsilon}=\kappa(\norm{B(u_{\varepsilon}-u_V)}_{C(\omega_{\delta};\R^{M})})$. Then, there exists $\varepsilon_{0}$ such that for all $\varepsilon\in (0,\varepsilon_0)$ the triplet $(u_{\varepsilon},\omega_{\delta},\tau_{\varepsilon})$ satisfies \Cref{hypo:scapetime},
 \beq \norm{\mathcal{V}_{u_{\varepsilon},\tau_{\varepsilon}}+V\circ y(\tau_{\varepsilon};\cdot,u_\varepsilon) -V}_{L^{\infty}(\omega)}\leq \beta\kappa(\norm{B(u_{\varepsilon}-u_V)}_{C(\omega_{\delta};\R^{M})})\norm{u_{\varepsilon}-u_V}_{C(\omega_{\delta};\R^{M})}^{2},\label{C1ConvergenceTheo:eq3}\eeq
 and consequently
 \beq \lim_{\lambda\to 0^{+}}\tau_{\varepsilon}=\infty,\mbox{ and }    \lim_{\varepsilon\to 0^{+}}\norm{\mathcal{V}_{u_{\varepsilon},\tau_{\varepsilon}}+V\circ y(\tau_{\varepsilon};\cdot,u_\varepsilon) -V}_{L^{\infty}(\omega)}=0.
 \label{C1ConvergenceTheo:eq4}
 \eeq
\end{theo}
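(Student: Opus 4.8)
The plan is to reduce everything to one exact identity for the error, obtained by differentiating $V$ along the closed loop and substituting the HJB equation, and to use the Lyapunov function $w$ solely to keep the trajectory inside the region $\omega_\delta$ on which this computation is valid.

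First I fix $y_0\in\omega$ and write $y(t)=y(t;y_0,u_\varepsilon)$ for the closed-loop trajectory, which exists and is unique locally by standard ODE theory since $u_\varepsilon$ is Lipschitz and $f,B\in Lip_{loc}$. Because $V\in C^{1}(\Omega)$ it is a \emph{classical} solution of $\mathcal H(y,\nabla V)=0$, and the maximizer defining $\mathcal H(y,p)=\max_u H(y,p,u)$ is attained exactly at $u_V(y)=-\frac{1}{\beta}B(y)^{\top}\nabla V(y)$. Writing $\mathcal H(y,\nabla V(y))=H(y,\nabla V(y),u_V(y))=0$ out yields the two pointwise identities $\ell(y)+\nabla V(y)^{\top}f(y)=\frac{\beta}{2}|u_V(y)|^{2}$ and $\nabla V(y)^{\top}B(y)=-\beta\,u_V(y)^{\top}$. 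Differentiating $t\mapsto V(y(t))$ and integrating, I obtain
\[ \mathcal{V}_{u_\varepsilon,\tau_\varepsilon}(y_0)+V(y(\tau_\varepsilon))-V(y_0)=\int_0^{\tau_\varepsilon}\!\Big(\ell(y)+\tfrac{\beta}{2}|u_\varepsilon(y)|^{2}+\nabla V(y)^{\top}(f(y)+B(y)u_\varepsilon(y))\Big)\,dt, \]
and inserting the two identities the integrand collapses, after completing the square, to $\frac{\beta}{2}|u_\varepsilon(y(t))-u_V(y(t))|^{2}\ge0$. This exact representation is the heart of the proof.

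To justify integrating over all of $[0,\tau_\varepsilon]$ I must verify \Cref{hypo:scapetime}. Since $V\in C^{1}$, the viscosity super-solution statement of \Cref{Lyapunov} with $\phi=V$ and $g=0$ becomes the pointwise inequality $\nabla w(y)^{\top}(f(y)+B(y)u_V(y))\le0$ on $\omega_\delta$. Hence, as long as $y(t)\in\omega_\delta$ (all terms evaluated at $y(t)$),
\[ \frac{d}{dt}w(y(t))=\nabla w^{\top}(f+Bu_V)+\nabla w^{\top}B(u_\varepsilon-u_V)\le \norm{\nabla w}_{L^{\infty}(\omega_\delta;\R^d)}\,\norm{B(u_\varepsilon-u_V)}_{C(\omega_\delta;\R^M)}=:\norm{\nabla w}_{L^{\infty}(\omega_\delta;\R^d)}E_\varepsilon. \]
Integrating gives $w(y(t))\le \sup_{y_0\in\omega}w(y_0)+\norm{\nabla w}_{L^{\infty}(\omega_\delta;\R^d)}E_\varepsilon\,t$; a standard continuity (bootstrap) argument, combined with the calibration $\norm{\nabla w}_{L^{\infty}(\omega_\delta;\R^d)}E_\varepsilon\tau_\varepsilon\le\delta$ furnished by \eqref{C1ConvergenceTheo:eq2} (recall $\tau_\varepsilon=\kappa(E_\varepsilon)$ and $\kappa(s)s\le\delta/\norm{\nabla w}_{L^{\infty}(\omega_\delta;\R^d)}$), then traps $y(t)$ in $\overline{\omega_\delta}\subset\Omega$ for all $t\in[0,\tau_\varepsilon]$. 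It suffices to take $\varepsilon_0$ so small that $E_\varepsilon<s_0$ for $\varepsilon<\varepsilon_0$; this proves \Cref{hypo:scapetime}.

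Bounding the integrand of the exact identity by its supremum over $\overline{\omega_\delta}$ yields $\frac{\beta}{2}\tau_\varepsilon\norm{u_\varepsilon-u_V}_{C(\omega_\delta;\R^M)}^{2}\le\beta\kappa(E_\varepsilon)\norm{u_\varepsilon-u_V}_{C(\omega_\delta;\R^M)}^{2}$, which is \eqref{C1ConvergenceTheo:eq3}. The limit $\tau_\varepsilon\to\infty$ in \eqref{C1ConvergenceTheo:eq4} is immediate from $E_\varepsilon\to0$ and $\kappa(s)\to\infty$. For the error limit I feed $\kappa(E_\varepsilon)\le\delta/(\norm{\nabla w}_{L^{\infty}(\omega_\delta;\R^d)}E_\varepsilon)$ into \eqref{C1ConvergenceTheo:eq3} to obtain the bound $\frac{\beta\delta}{\norm{\nabla w}_{L^{\infty}(\omega_\delta;\R^d)}}\,\norm{u_\varepsilon-u_V}_{C(\omega_\delta;\R^M)}^{2}/E_\varepsilon$. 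The main obstacle is precisely the tension visible here: a large horizon $\tau_\varepsilon$ is needed so the finite-horizon surrogate captures the full cost, yet the error grows like $\tau_\varepsilon\norm{u_\varepsilon-u_V}^{2}$, so $\kappa$ must diverge strictly slower than $\norm{u_\varepsilon-u_V}^{-2}$ while the confinement budget pins $\kappa$ to $1/E_\varepsilon$. The decisive point is that the running cost is measured in $|u_\varepsilon-u_V|$ whereas the state (hence confinement) is measured in $|B(u_\varepsilon-u_V)|$; the limit closes once $\norm{u_\varepsilon-u_V}^{2}=o(E_\varepsilon)$, for instance whenever $B$ is uniformly bounded below so that the two norms are equivalent and the right-hand side is controlled by $\mathrm{const}\cdot E_\varepsilon\to0$.
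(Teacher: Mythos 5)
Your argument is essentially the paper's own proof with its two ingredients inlined. The exact identity you obtain by differentiating $V$ along the closed loop and completing the square is precisely the computation behind \Cref{lemma:main:estimate} and \Cref{ConvTheo1}, specialized to the case where the super-solution is the classical solution $V$ (so the inequality becomes an equality, with the sharper constant $\beta/2$ in place of $\beta$, and nonnegativity of the error comes for free rather than from the dynamic programming principle). Your confinement argument via $\frac{d}{dt}w(y(t))\le\norm{\nabla w}_{L^{\infty}(\omega_\delta;\R^d)}\norm{B(u_\varepsilon-u_V)}_{C(\omega_\delta;\R^M)}$ is exactly the proof of \Cref{LyapunovStabLemma}(\ref{LyapunovStabLemma:a}), which the paper invokes to get $\tilde T_\varepsilon\ge\delta/(\norm{B(u_\varepsilon-u_V)}\norm{\nabla w})\ge\tau_\varepsilon$; so \eqref{C1ConvergenceTheo:eq3} and \Cref{hypo:scapetime} are established the same way in both texts. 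The tension you flag at the end is genuine, and it is not resolved in the paper either: the paper derives \eqref{C1ConvergenceTheo:eq3} and then asserts \eqref{C1ConvergenceTheo:eq4} with a bare ``consequently''. Since the calibration \eqref{C1ConvergenceTheo:eq2} only yields $\kappa(E_\varepsilon)\le\delta/(\norm{\nabla w}E_\varepsilon)$ with $E_\varepsilon=\norm{B(u_\varepsilon-u_V)}_{C(\omega_\delta;\R^M)}$, the right-hand side of \eqref{C1ConvergenceTheo:eq3} is only controlled by a constant times $\norm{u_\varepsilon-u_V}^2_{C(\omega_\delta;\R^M)}/E_\varepsilon$, which tends to zero only under an additional relation such as $\norm{u_\varepsilon-u_V}^2=o(\norm{B(u_\varepsilon-u_V)})$; hypothesis \eqref{C1ConvergenceTheo:eq1} alone controls $B(u_\varepsilon-u_V)$ and says nothing about $u_\varepsilon-u_V$. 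So your closing caveat identifies a gap in the theorem's statement and the paper's proof rather than in your own argument; the natural repairs are the one you name (a uniform lower bound on $B$ making the two norms equivalent) or restating \eqref{C1ConvergenceTheo:eq1}, \eqref{C1ConvergenceTheo:eq2} and $\tau_\varepsilon$ in terms of $\norm{u_\varepsilon-u_V}$ itself, in which case $\kappa(s)s^2\le \left(\delta/\norm{\nabla w}_{L^{\infty}(\omega_\delta;\R^d)}\right)s\to0$ closes the limit.
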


The following result holds under a semi-convexity assumption on $V$.

\begin{theo}
\label{SemiConvexConvergenceTheo}
 Let $\Omega\subset\R^{d}$ be convex and bounded, $\omega\Subset\Omega$ open, $\delta>0$, and $w\in C^{1}(\overline{\Omega})$, such that they satisfy \Cref{Lyapunov} with $\phi=V$ the value function of \eqref{ControlProblem} and $g=0$. Assume that $V$ is semi-convex with constant $C>0$. For $\lambda>0$ and $\varepsilon>0$, set $ V_{\varepsilon,\lambda}=\rho_{\varepsilon}\ast \mathcal{M}_{\lambda}V$ with $\rho$ a smooth  mollifier. Then there exists $\lambda_0>$ such that for all $\lambda\in (0,\lambda_0)$ there exists $\varepsilon(\lambda)$ such that
 \beq
 \norm{\nabla V_{\varepsilon(\lambda),\lambda}-\nabla \mathcal{M}_{\lambda}V}^2_{L^{\infty}(\omega_{\delta;\R^{d}})}\leq \lambda,\mbox{ and }\omega_{\delta}+B(0,\varepsilon(\lambda))\subset\Omega^{\lambda}_{V}.
 \label{SemiConvexConvergenceTheo:eq1}
 \eeq
 Set $u_{\lambda}=u_{V_{\varepsilon(\lambda),\lambda}}$, choose $\kappa:(0,\infty)\to(0,\infty)$ such that
 \beq \lim_{s\to 0^{+}}\kappa(s)=\infty\mbox{ and }\lim_{s\to 0^{+}}\kappa(s)s=0, \label{SemiConvexConvergenceTheo:eq2}\eeq
 and  set \beq \tau_{\lambda}=\min\left\{\delta\left(\sigma^{1}_{\varepsilon(\lambda),\lambda}+\sigma_{\varepsilon(\lambda)}^{2}\frac{\norm{B^{\top}\nabla \mathcal{M}_{\lambda}V}_{L^{\infty}(\omega_{\delta};\R^{m})}}{\beta}\right)^{-1}, \, \kappa(\lambda)\right\}\label{SemiConvexConvergenceTheo:eq3},\eeq
 where $\sigma^{1}_{\varepsilon(\lambda),\lambda}$ and $\sigma_{\varepsilon(\lambda)}^{2}$ were defined in \eqref{LyapunovStabLemma:sigma1,l} and \eqref{LyapunovStabLemma:sigma2}, respectively.
 Then, for all $\lambda\in (0,\lambda_0)$ the triplet $(u_\lambda,\omega,\tau_\lambda$ satisfies Hypothesis \ref{hypo:scapetime}, and
 \beq \lim_{\lambda\to 0^{+}}\tau_{\lambda}=\infty,\mbox{ and }\lim_{\lambda\to 0^{+}}\norm{\mathcal{V}_{u_{\lambda},\tau_{\lambda}}+V\circ y(\tau_\lambda;\cdot,u_\lambda) -V}_{L^{\infty}(\omega)}=0.\label{SemiConvexConvergenceTheo:eq4}\eeq

\end{theo}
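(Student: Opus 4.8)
The plan is to transfer the $C^{1}$ argument behind \Cref{C1ConvergenceTheo} to the merely semi-convex setting by replacing the non-differentiable $V$ with its Moreau envelope $\mathcal{M}_{\lambda}V$, which is $C^{1,1}$ and convex for $\lambda\in(0,1/C)$ by \Cref{MoreauEnvProp}\,(\ref{MoreauEnvProp:item3}), and then mollifying to produce a genuinely $C^{1}$ (indeed $C^{\infty}$) feedback. First I would fix $\lambda_{0}\le 1/C$ small enough that $\omega_{\delta}\Subset\Omega^{\lambda}_{V}$ for $\lambda\in(0,\lambda_{0})$; since $\mathcal{M}_{\lambda}V\in C^{1,1}$, the mollifications $V_{\varepsilon,\lambda}=\rho_{\varepsilon}\ast\mathcal{M}_{\lambda}V$ satisfy $\nabla V_{\varepsilon,\lambda}\to\nabla\mathcal{M}_{\lambda}V$ uniformly on $\overline{\omega_{\delta}}$ as $\varepsilon\to0^{+}$, so a choice $\varepsilon(\lambda)$ realizing \eqref{SemiConvexConvergenceTheo:eq1} exists. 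Because $V$ is semi-convex it is locally Lipschitz, hence $1$-Hölder on the bounded convex $\Omega$, and \Cref{rem:MoreauEnv:HJB} applies with $\alpha=1$: $\mathcal{M}_{\lambda}V$ is a super-solution of $\mathcal{H}(y,\nabla v)+h_{\lambda}=0$ on $\Omega^{\lambda}_{V}$ with $\sup_{\omega_{\delta}}h_{\lambda}\le C\lambda$, and as $\mathcal{M}_{\lambda}V\in C^{1,1}$ this inequality holds pointwise and may be used along trajectories.

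The heart of the argument is the optimality estimate. Writing $u_{\lambda}=u_{V_{\varepsilon(\lambda),\lambda}}=-\tfrac1\beta B^{\top}\nabla V_{\varepsilon(\lambda),\lambda}$ and differentiating $\mathcal{M}_{\lambda}V$ along the closed-loop trajectory $y(\cdot)=y(\cdot;y_{0},u_{\lambda})$, I would insert the super-solution inequality for $\ell$ and complete the square to obtain
\[
\ell(y)+\tfrac\beta2|u_{\lambda}(y)|^{2}+\tfrac{d}{dt}\mathcal{M}_{\lambda}V(y)\le \tfrac{1}{2\beta}\big|B(y)^{\top}(\nabla\mathcal{M}_{\lambda}V(y)-\nabla V_{\varepsilon(\lambda),\lambda}(y))\big|^{2}+h_{\lambda}(y),
\]
in which the terms involving $f$ cancel exactly. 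The remaining square is controlled by the \emph{squared} gradient error in \eqref{SemiConvexConvergenceTheo:eq1}, so the right-hand side is $O(\lambda)$ per unit time; this is precisely why the squared norm (rather than the norm) is prescribed there. Integrating over $[0,\tau_{\lambda}]$ and replacing $\mathcal{M}_{\lambda}V$ by $V$ at the two endpoints gives
\[
\mathcal{V}_{u_{\lambda},\tau_{\lambda}}(y_{0})+V(y(\tau_{\lambda}))-V(y_{0})\le \tau_{\lambda}\Big(\tfrac{1}{2\beta}\norm{B}_{L^{\infty}(\Omega;\R^{d\times m})}^{2}\lambda+\sup_{\omega_{\delta}}h_{\lambda}\Big)+2\norm{\mathcal{M}_{\lambda}V-V}_{L^{\infty}(\omega_{\delta})}.
\]
The matching lower bound $\mathcal{V}_{u_{\lambda},\tau_{\lambda}}(y_{0})+V(y(\tau_{\lambda}))\ge V(y_{0})$ is free from the dynamic programming principle \eqref{DynamicProgrammic}, since $u_{\lambda}$ generates an admissible control on $[0,\tau_{\lambda}]$. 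As $\tau_{\lambda}\le\kappa(\lambda)$ with $\kappa(\lambda)\lambda\to0$ by \eqref{SemiConvexConvergenceTheo:eq2}, and $\norm{\mathcal{M}_{\lambda}V-V}_{L^{\infty}(\omega_{\delta})}\to0$ by \Cref{MoreauEnvProp}\,(\ref{MoreauEnvProp:item0}), the right-hand side tends to $0$ uniformly in $y_{0}$, which is the second limit in \eqref{SemiConvexConvergenceTheo:eq4}.

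It remains to verify \Cref{hypo:scapetime} and $\tau_{\lambda}\to\infty$, which is where \Cref{Lyapunov} (with $\phi=V$, $g=0$) enters. Propagating the Moreau envelope through the Lyapunov inequality shows that $\mathcal{M}_{\lambda}V$ is a super-solution of $-\nabla w^{\top}(f+Bu_{\mathcal{M}_{\lambda}V})+g_{\lambda}=0$ with $g_{\lambda}$ as in \eqref{LyapunovStabLemma:glambda}; comparing the closed-loop field $f+Bu_{\lambda}$ with $f+Bu_{\mathcal{M}_{\lambda}V}$ and absorbing the spatial variation into the moduli \eqref{LyapunovStabLemma:sigma1,l} and \eqref{LyapunovStabLemma:sigma2} yields $\tfrac{d}{dt}w(y(t))\le \sigma^{1}_{\varepsilon(\lambda),\lambda}+\sigma^{2}_{\varepsilon(\lambda)}\norm{B^{\top}\nabla\mathcal{M}_{\lambda}V}_{L^{\infty}(\omega_{\delta})}/\beta$. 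Integrating, $w$ stays below $\sup_{\omega}w+\delta$ for $t$ up to the first term of the $\min$ in \eqref{SemiConvexConvergenceTheo:eq3}, so the trajectory remains in $\omega_{\delta}\Subset\Omega$ throughout $[0,\tau_{\lambda}]$ and \Cref{hypo:scapetime} holds. Finally $\tau_{\lambda}\to\infty$ because both arguments of the $\min$ diverge: $\kappa(\lambda)\to\infty$, while $\sigma^{2}_{\varepsilon(\lambda)}\to0$ and $\sigma^{1}_{\varepsilon(\lambda),\lambda}\to0$ (using $g=0$, that $g_{\lambda}\to0$, and that $\norm{B^{\top}\nabla\mathcal{M}_{\lambda}V}_{L^{\infty}(\omega_{\delta})}$ stays bounded by $\norm{B}_{L^{\infty}(\Omega;\R^{d\times m})}$ times the Lipschitz constant of $V$ on $\omega_{\delta}$).

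The main obstacle is reconciling the two competing demands on $\tau_{\lambda}$: the escape estimate forces $\tau_{\lambda}$ below the Lyapunov rate $\delta/(\sigma^{1}_{\varepsilon(\lambda),\lambda}+\cdots)$, while the optimality defect grows like $\tau_{\lambda}\lambda$. These are compatible precisely because the completion of squares turns the $C^{1}$ mollification error into a quantity quadratic in $\nabla\mathcal{M}_{\lambda}V-\nabla V_{\varepsilon(\lambda),\lambda}$, hence $O(\lambda)$ per unit time by \eqref{SemiConvexConvergenceTheo:eq1}; a naive linear estimate would only give $O(\sqrt{\lambda})$ per unit time and would fail to close against $\kappa(s)s\to0$. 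Care is also required that the trajectory stays inside $\Omega^{\lambda}_{V}$, which is guaranteed by the inclusion in \eqref{SemiConvexConvergenceTheo:eq1} together with the escape estimate, so that both the bound on $h_{\lambda}$ and the uniform convergence $\mathcal{M}_{\lambda}V\to V$ remain available along it.
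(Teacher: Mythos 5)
Your proposal is correct and follows essentially the same route as the paper: Moreau envelope to get a $C^{1,1}$ super-solution of the perturbed HJB equation, mollification with a diagonalization to fix $\varepsilon(\lambda)$, the $C^1$-type error estimate along closed-loop trajectories (you re-derive the completion-of-squares inequality of Lemma \ref{lemma:main:estimate} directly, with a slightly sharper constant, where the paper simply invokes Theorem \ref{ConvTheo1}), and Lemma \ref{LyapunovStabLemma}(\ref{LyapunovStabLemma:c}) together with Remark \ref{rem:lyapunov:conv} for the escape-time bound and $\tau_\lambda\to\infty$. The explicit mention of the free lower bound from the dynamic programming principle is a welcome clarification the paper leaves implicit.
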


In the following theorem we replace the $C^{1}$ regularity of $V$  employed in Theorem \ref{C1ConvergenceTheo}  by the semi-concavity of $V$. This implies that $V$ is locally Lipschitz continuous, which in turns tell us by the Rademacher Theorem that it is differentiable almost everywhere. Hence $V$ satisfies \eqref{HJBeq} almost everywhere.

\begin{theo}
\label{SemiConcaveConvergenceTheo}
 Let $\Omega\subset\R^{d}$ be  convex and bounded, $\omega\Subset\Omega$ open, $\delta>0$, and $w\in C^{1}(\overline{\Omega})$, such that they satisfy \Cref{Lyapunov} with $g=0$ and $\phi=V\in C(\Omega)$, which is assumed to be semi-concave in $\Omega$ with constant $C>0$. Set $u_{\varepsilon}=u_{V_{\varepsilon}} $ with $V_{\varepsilon}=\rho_{\varepsilon}\ast V$ a smooth mollification of $V$, and $p\in [1,\infty)$. Let $\kappa:(0,1)\to(0,\infty)$ be such that
 \beq \lim_{s\to 0^{+}}\kappa(s)=\infty\mbox{ and }\lim_{s\to 0^{+}}\kappa(s)^{\frac{p-1}{p}}(e^{K\kappa(s)}-1)^{\frac{1}{p}}s^2=0,\label{SemiConcaveConvergenceTheo:eq3}\eeq
 where
$$ K=\left(\frac{1}{\beta}\left(mdC\norm{B}_{L^{\infty}(\omega_{\delta};\R^{m\times d})}^{2}+md^2\norm{B}^2_{Lip(\omega_{\delta};\R^{d\times m})}\norm{\nabla V}_{Lip(\omega_{\delta};\R^{d})}\right)+d\norm{f}_{Lip(\overline{\Omega};\R^{d})}\right),$$
 and  set \beq \tau_{\varepsilon}=\min\left\{\delta\left(\sigma^1_{\varepsilon}+\frac{\sigma^2_{\varepsilon}}{\beta}\norm{\nabla V}_{L^{\infty}(\omega_{\delta};R^{d})}\right)^{-1},\kappa\left(\norm{u_{\varepsilon}-u_V}_{L^{p}(\omega_{\delta};\R^{m})}\right)\right\},\label{SemiConcaveConvergenceTheo:eq4}\eeq
 where $\sigma^{1}_{\varepsilon}$ and $\sigma_{\varepsilon}^{2}$ were defined in \eqref{LyapunovStabLemma:sigma1} and \eqref{LyapunovStabLemma:sigma2}, respectively. Then there exists $\varepsilon_{0}$ such that for all $\varepsilon\in (0,\varepsilon_0)$ the triplet $(u_{\varepsilon}, \omega, \tau_{\varepsilon})$ satisfies Hypothesis \ref{hypo:scapetime}  and consequently
 \beq
\lim_{\varepsilon\to 0^{+}}\tau_{\varepsilon}=\infty  \mbox{ and }
 \lim_{\varepsilon\to 0^{+}}\norm{\mathcal{V}_{u_{\varepsilon},\tau_{\varepsilon}}+V\circ y(\tau_\varepsilon;\cdot,u_\varepsilon) -V}_{L^{p}(\omega)}=0.\label{SemiConcaveConvergenceTheo:eq5}\eeq
\end{theo}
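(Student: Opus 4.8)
The plan is to assemble two essentially independent ingredients: a Lyapunov/escape-time estimate that keeps the closed-loop trajectories inside $\omega_\delta$ up to time $\tau_\varepsilon$ (thereby verifying \Cref{hypo:scapetime}), and an exact error identity which, after a change of variables, yields the $L^p$ bound. Throughout write $y(t;y_0)=y(t;y_0,u_\varepsilon)$ and let $\Phi^\varepsilon_t\colon y_0\mapsto y(t;y_0)$ be the closed-loop flow. The first ingredient comes from \Cref{Lyapunov} (with $\phi=V$, $g=0$) together with the stability result of \Cref{Sec:StabilityEst}: since $V$ is a viscosity super-solution of $-\nabla w^\top(f+Bu_V)=0$ on $\omega_\delta$, the map $t\mapsto w(y(t;y_0))$ cannot reach the threshold $\sup_\omega w+\delta$ before a time comparable to $\delta\big(\sigma^1_\varepsilon+\tfrac{\sigma^2_\varepsilon}{\beta}\norm{\nabla V}_{L^\infty}\big)^{-1}$, which is exactly the first entry of the minimum defining $\tau_\varepsilon$. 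Hence for all small $\varepsilon$ one has $\Phi^\varepsilon_t(\omega_\delta)\subset\omega_\delta\Subset\Omega$ for $t\in[0,\tau_\varepsilon]$, giving \Cref{hypo:scapetime}. Moreover the lower bound $\mathcal{V}_{u_\varepsilon,\tau_\varepsilon}+V\circ y(\tau_\varepsilon;\cdot)-V\ge 0$ is automatic from the dynamic programming principle \eqref{DynamicProgrammic}, so only a matching upper bound must be produced.

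For the upper bound I would first establish, for a.e. $y_0\in\omega_\delta$, the exact identity
\beq \mathcal{V}_{u_\varepsilon,\tau}(y_0)+V(y(\tau;y_0))-V(y_0)=\frac{\beta}{2}\int_0^\tau \big|u_\varepsilon(y(t;y_0))-u_V(y(t;y_0))\big|^2\,dt. \label{plan:star}\eeq
Since $V_\varepsilon=\rho_\varepsilon\ast V$ is smooth, differentiating $t\mapsto V_\varepsilon(y(t;y_0))$, using $\dot y=f+Bu_\varepsilon$ and the definition \eqref{uv:def} of $u_\varepsilon=u_{V_\varepsilon}$, gives $\mathcal{V}_{u_\varepsilon,\tau}+V_\varepsilon\circ y(\tau)-V_\varepsilon=-\int_0^\tau\mathcal{H}(y,\nabla V_\varepsilon)\,dt$. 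Because $V$ is locally Lipschitz (\Cref{Simconvexity:c11}) it solves \eqref{HJBeq} a.e.; writing $\mathcal{H}(z,p)=H(z,p,u_p)$ with $u_p=-\tfrac1\beta B^\top p$ the maximizer and completing the square in $u$ then yields, a.e.,
\beq -\mathcal{H}(z,\nabla V_\varepsilon(z))=\frac{\beta}{2}\big|u_\varepsilon(z)-u_V(z)\big|^2+(\nabla V_\varepsilon(z)-\nabla V(z))^\top\big(f(z)+B(z)u_\varepsilon(z)\big). \label{plan:resid}\eeq
The key point is that the last term equals $\tfrac{d}{dt}(V_\varepsilon-V)(y(t;y_0))$ along the flow, so it telescopes and its boundary values cancel the discrepancy between $V_\varepsilon\circ y(\tau)-V_\varepsilon$ and $V\circ y(\tau)-V$, leaving \eqref{plan:star}; note that the smooth $V_\varepsilon$ disappears from the final identity, its only roles being to furnish an admissible feedback and to make the chain rule legitimate.

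With \eqref{plan:star} in hand, the $L^p(\omega_\delta)$ estimate follows from Jensen's inequality in $t$, Fubini, and the push-forward $z=\Phi^\varepsilon_t(y_0)$. Here the semi-concavity of $V$ is essential: by \Cref{prop:hessbound} it forces $\nabla^2V_\varepsilon\le C\,I$, which bounds $\mathrm{div}(f+Bu_\varepsilon)=\mathrm{div}f-\tfrac1\beta\,\mathrm{tr}(BB^\top\nabla^2V_\varepsilon)+\dots$ from below by $-K$ \emph{uniformly in} $\varepsilon$, so $\det D\Phi^\varepsilon_t\ge e^{-Kt}$ and the inverse-flow Jacobian is at most $e^{Kt}$; combined with $\Phi^\varepsilon_t(\omega_\delta)\subset\omega_\delta$ this gives
\beq \norm{\mathcal{V}_{u_\varepsilon,\tau}+V\circ y(\tau;\cdot)-V}_{L^p(\omega_\delta)}\le C\,\tau^{\frac{p-1}{p}}\big(e^{K\tau}-1\big)^{\frac1p}\norm{u_\varepsilon-u_V}_{L^p(\omega_\delta;\R^m)}^2, \label{plan:final}\eeq
precisely the expression controlled by \eqref{SemiConcaveConvergenceTheo:eq3}. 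To conclude I would collect the limits: mollification of $\nabla V\in L^\infty_{loc}$ gives $\norm{u_\varepsilon-u_V}_{L^p(\omega_\delta)}\to0$; for $g=0$ one has $\sigma^1_\varepsilon,\sigma^2_\varepsilon\to0$, so the first entry of the minimum in $\tau_\varepsilon$ diverges, and since $\kappa(s)\to\infty$ as $s\to0^+$ also $\tau_\varepsilon\to\infty$; finally, evaluating \eqref{plan:final} at $\tau=\tau_\varepsilon\le\kappa(\norm{u_\varepsilon-u_V}_{L^p})$ and invoking the balance \eqref{SemiConcaveConvergenceTheo:eq3} forces the right-hand side to $0$, giving \eqref{SemiConcaveConvergenceTheo:eq5}. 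I expect the main obstacle to be the rigorous justification of \eqref{plan:star} for the merely Lipschitz $V$: the chain rule $\tfrac{d}{dt}V(y(t))=\nabla V(y(t))^\top\dot y(t)$ can fail on the null set where $V$ is not differentiable, and one must argue — via Fubini in $(t,y_0)$ and the bound $\det D\Phi^\varepsilon_t\ge e^{-Kt}$ — that for a.e. $y_0$ the trajectory meets this set only on a $t$-null set. A secondary point is the exponent bookkeeping, since Jensen produces $\norm{u_\varepsilon-u_V}_{L^{2p}}^2$, which must be returned to the $L^p$-based quantity in \eqref{SemiConcaveConvergenceTheo:eq3} using the uniform bound on $|u_\varepsilon-u_V|$ afforded by the local Lipschitz continuity of $V$.
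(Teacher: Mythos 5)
Your proposal is correct and reaches the theorem through the same overall architecture as the paper --- the escape-time estimate of \Cref{LyapunovStabLemma}(b) to verify \Cref{hypo:scapetime} and get $\tau_\varepsilon\to\infty$, the flow--Jacobian bound $\det D\Phi^\varepsilon_t\ge e^{-Kt}$ driven by the uniform semi-concavity bound $\nabla^2V_\varepsilon\le C\,I$ of \Cref{prop:hessbound}, and a completed-square representation of the residual --- but the central error estimate is obtained by a genuinely different route. The paper does not work with $V$ directly: it invokes \Cref{ConvTheo2}, whose proof approximates the Lipschitz super-solution $v=V$ by $C^1$ functions converging in $W^{1,2p}\cap C$, applies the $C^1$ estimate of \Cref{lemma:main:estimate} to each approximant (an inequality, with a defect term $h_\varepsilon$ that is pushed to zero in $L^p$ via \Cref{LemmaInt}), and passes to the limit; this controls only the positive part, the sign being supplied by dynamic programming exactly as you observe. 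You instead derive an exact identity for $V$ itself, namely $\mathcal{V}_{u_\varepsilon,\tau}+V\circ y(\tau;\cdot)-V=\tfrac{\beta}{2}\int_0^\tau|u_\varepsilon-u_V|^2\,dt$ along a.e.\ trajectory, by using that the locally Lipschitz $V$ solves \eqref{HJBeq} a.e.\ and justifying the chain rule for $V$ along the flow through the same Jacobian bound (the non-differentiability set of $V$ is Lebesgue-null, and Fubini plus $\det D\Phi^\varepsilon_t\ge e^{-Kt}$ shows its preimage under the flow is $(t,y_0)$-null). That step is precisely the technical content the paper's smoothing of $v$ is designed to circumvent, but your argument for it is sound, and your version buys an exact two-sided error representation with the sharper constant $\beta/2$ in place of the paper's one-sided bound with constant $\beta$.

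One bookkeeping point: the change of variables naturally produces $\norm{u_\varepsilon-u_V}^2_{L^{2p}(\omega_\delta;\R^m)}$, as in the paper's own estimate \eqref{SemiConcaveConvergenceTheo:eq6}, whereas $\tau_\varepsilon$ and condition \eqref{SemiConcaveConvergenceTheo:eq3} are phrased in terms of $s=\norm{u_\varepsilon-u_V}_{L^{p}(\omega_\delta;\R^m)}$. Your proposed repair via $\norm{g}^2_{L^{2p}}\le\norm{g}_{L^\infty}\norm{g}_{L^p}$ yields only the first power of $s$, so \eqref{SemiConcaveConvergenceTheo:eq3}, which involves $s^2$, does not literally close the argument; the paper's proof passes from \eqref{SemiConcaveConvergenceTheo:eq6} to the conclusion with the same silent identification of the two norms, so this is a shared wrinkle rather than a defect specific to your route.
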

\begin{rem}
    It might seem that $\kappa$ is difficult to find, but for instance, $s\mapsto -\frac{1}{\alpha}\log(s)$ for $s\in (0,1)$ and $\alpha>\frac{K}{p+1}$, satisfies \eqref{SemiConcaveConvergenceTheo:eq3}. It is also important to observe that this provides an upper bound on the convergence of $\mathcal{V}_{u_{\varepsilon},T_\varepsilon}$ to $V$ in $L^{p}(\omega_{\delta})$.
\end{rem}
We next relax the semi-concavity assumption of Theorem \ref{SemiConcaveConvergenceTheo}  by using a mollification of the Moreau envelope of the value function combined with Lemma \ref{LyapunovStabLemma}(c) concerning the escape time from $\omega_\delta$.
\begin{theo}
\label{HolderConvergenceTheo}
 Let $\Omega\subset\R^{d}$ be convex and bounded, $\omega\Subset\Omega$ open, $\delta>0$, and $w\in C^{1}(\overline{\Omega})$, such that they satisfy \Cref{Lyapunov} with $g=0$ and $\phi=V$, which is assumed to be $\alpha-$Hölder continuous in $\Omega$ with constant $C>0$ and $\alpha\in (\frac{1}{2},1]$. Further, assume $\nabla w$ is $\sigma$-Hölder continuous with $\sigma\in (1-\alpha,1]$. For $\lambda >0$ and $\varepsilon>0$, set $V_{\varepsilon,\lambda}=\rho_{\varepsilon}\ast \mathcal{M}_{\lambda} V$ a mollification of $\mathcal{M}_\lambda V$. Let $p\in [1,\infty)$ be fixed and $\eta:(0,1)\to(0,\infty)$ be such that $\lim_{s\to 0^+}\eta(s)=0$.
 Then there exists $\lambda_0$ such that for all $\lambda \in (0,\lambda_0)$ there exist $\varepsilon(\lambda)>0$ such that
\beq \norm{\nabla V_{\varepsilon(\lambda),\lambda}-\nabla \mathcal{M}_{\lambda} V}_{L^{2p}(\omega_{\delta})}^2\leq \eta (\lambda),\ \varepsilon(\lambda)\leq \lambda^{\frac{1}{2-\alpha}},\mbox{ and }\omega_{\delta}+B(0,\varepsilon(\lambda))\subset \Omega^{\lambda}_{V}.\label{HolderConvergenceTheo:eq0}\eeq
 Set $u_{\lambda }=u_{V_{\varepsilon(\lambda),\lambda}}$ and
let $\kappa:(0,1)\to(0,\infty)$ be such that
 \beq \lim_{s\to 0^{+}}\kappa(s)=\infty\mbox{ and }\lim_{s\to 0^{+}}\kappa(s)^{\frac{p-1}{p}}(e^{K(s)\kappa(s)}-1)^{\frac{1}{p}}\eta(s)+\kappa(s)s^{\frac{2\alpha-1}{2-\alpha}}=0,\label{HolderConvergenceTheo:eq2}\eeq
 where
\beq K(s)=\left(\frac{md(d+1)}{\beta s}\norm{B}_{Lip(\omega_{\delta};\R^{d\times M})}^{2}+d\norm{f}_{Lip(\omega_{\delta};\R^{d})}\right).\label{HolderConvergenceTheo:eq3}\eeq
 Further  set
 \beq \tau_{\lambda}=\min\left\{\delta\left(\sigma^1_{\varepsilon(\lambda),\lambda}+\frac{\sigma^2_{\varepsilon(\lambda)}}{\beta}\norm{\nabla \mathcal{M}_{\lambda}V}_{L^{\infty}(\omega_{\delta};R^{d})}\right)^{-1},\kappa\left(\lambda\right)\right\}.
 \label{HolderConvergenceTheo:eq4}\eeq
 where $\sigma^{1}_{\varepsilon(\lambda),\lambda}$ and $\sigma_{\varepsilon(\lambda)}^{2}$ were defined in \eqref{LyapunovStabLemma:sigma1,l} and \eqref{LyapunovStabLemma:sigma2}, respectively. Then for all $\lambda\in (0,\lambda_0)$ the triplet $(u_\lambda,\omega,\tau_\lambda)$ satisfies Hypothesis \ref{hypo:scapetime}
and
 \beq \lim_{\lambda\to 0^{+}}\tau_{\lambda }=\infty\mbox{ and }\lim_{\lambda\to 0^{+}}\norm{\mathcal{V}_{u_{\lambda},\tau_{\lambda}}+V\circ y(\tau_\lambda;\cdot,u_\lambda) -V}_{L^{p}(\omega)}=0.\label{HolderConvergenceTheo:eq5}\eeq

\end{theo}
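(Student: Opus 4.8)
The plan is to follow the architecture of the proof of Theorem \ref{SemiConcaveConvergenceTheo}, but to regularize $V$ in two stages: first by the Moreau envelope $\mathcal{M}_\lambda V$, which by Proposition \ref{MoreauEnvProp}(\ref{MoreauEnvProp:item1}) is semi-concave with constant $\frac{1}{\lambda}$ on $\Omega^\lambda_V$ and hence locally Lipschitz with $\nabla\mathcal{M}_\lambda V\in L^\infty_{loc}(\Omega^\lambda_V)$, and then by mollification $V_{\varepsilon,\lambda}=\rho_\varepsilon\ast\mathcal{M}_\lambda V$, which by Proposition \ref{prop:hessbound} inherits the same semi-concavity constant $\frac{1}{\lambda}$ on $\Omega^\lambda_V+B(0,\varepsilon)$. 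Since $\omega_\delta\Subset\Omega$ and $\Omega^\lambda_V\nearrow$ (its defining distance $2\lambda^{1/2}\norm{V}_{L^\infty(\Omega)}^{1/2}\to0$) as $\lambda\to0^+$, there is $\lambda_0>0$ with $\omega_\delta\Subset\Omega^\lambda_V$ for all $\lambda\in(0,\lambda_0)$. For each such $\lambda$, standard mollification theory yields $V_{\varepsilon,\lambda}\to\mathcal{M}_\lambda V$ in $W^{1,2p}_{loc}(\Omega^\lambda_V)$ as $\varepsilon\to0^+$, so a sufficiently small diagonal choice $\varepsilon(\lambda)$ produces \eqref{HolderConvergenceTheo:eq0}, i.e. the $L^{2p}(\omega_\delta)$ gradient bound by $\eta(\lambda)$ together with $\varepsilon(\lambda)\le\lambda^{1/(2-\alpha)}$ and the geometric inclusion.

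Next I would verify Hypothesis \ref{hypo:scapetime} via the escape-time estimate of Lemma \ref{LyapunovStabLemma}(c). Combining Hypothesis \ref{Lyapunov} (with $\phi=V$, $g=0$) with the super-solution property of the Moreau envelope in the Lyapunov-type inequality, whose residual is the $g_\lambda$ of \eqref{LyapunovStabLemma:glambda}, shows that along the trajectory generated by $u_\lambda=u_{V_{\varepsilon(\lambda),\lambda}}$ the function $w$ increases at a rate controlled by $\sigma^1_{\varepsilon(\lambda),\lambda}+\sigma^2_{\varepsilon(\lambda)}\beta^{-1}\norm{B^\top\nabla\mathcal{M}_\lambda V}_{L^\infty(\omega_\delta)}$; with $\tau_\lambda$ as in \eqref{HolderConvergenceTheo:eq4}, the increment of $w$ over $[0,\tau_\lambda]$ stays below $\delta$, keeping the trajectory in $\omega_\delta\subset\Omega$. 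The divergence $\tau_\lambda\to\infty$ then requires the bracket in \eqref{HolderConvergenceTheo:eq4} to vanish, and here the two Hölder exponents cooperate. By \eqref{MoreauEnvProp:item00:1} every $y\in\mathcal{A}_\lambda V(x)$ obeys $|x-y|\le(2C\lambda)^{1/(2-\alpha)}$, so $|\nabla\mathcal{M}_\lambda V|\le C\lambda^{(\alpha-1)/(2-\alpha)}$, which blows up; but the $\sigma$-Hölder continuity of $\nabla w$ gives $\sigma^2_{\varepsilon(\lambda)}\le C\lambda^{\sigma/(2-\alpha)}$, whence $\sigma^2_{\varepsilon(\lambda)}\norm{B^\top\nabla\mathcal{M}_\lambda V}_{L^\infty}\le C\lambda^{(\sigma+\alpha-1)/(2-\alpha)}\to0$ \emph{precisely} because $\sigma>1-\alpha$. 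The same exponent condition, applied to the dominant term $h_w(|x-y|)\frac{|x-y|}{\lambda}\le C\lambda^{(\sigma+\alpha-1)/(2-\alpha)}$ in \eqref{LyapunovStabLemma:glambda}, forces $\sigma^1_{\varepsilon(\lambda),\lambda}\to0$ as announced in Remark \ref{rem:lyapunov:conv}, while $\kappa(\lambda)\to\infty$ by \eqref{HolderConvergenceTheo:eq2}.

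For \eqref{HolderConvergenceTheo:eq5} I would invoke the error estimate of \Cref{Sec:ErrorEs} for the semi-concave approximate super-solution $V_{\varepsilon(\lambda),\lambda}$, decomposing the value gap into three contributions. The HJB residual $h_\lambda$ of $\mathcal{M}_\lambda V$, which by Remark \ref{rem:MoreauEnv:HJB} is $O\bigl(\lambda^{(2\alpha-1)/(2-\alpha)}\bigr)$, integrated over $[0,\tau_\lambda]$ is controlled by $\kappa(\lambda)\lambda^{(2\alpha-1)/(2-\alpha)}$. The feedback mismatch $u_{V_{\varepsilon(\lambda),\lambda}}-u_{\mathcal{M}_\lambda V}=-\frac{1}{\beta}B^\top(\nabla V_{\varepsilon(\lambda),\lambda}-\nabla\mathcal{M}_\lambda V)$ propagates along the flow with a Grönwall factor governed by $K(\lambda)$ from \eqref{HolderConvergenceTheo:eq3}; squaring the $L^{2p}$ bound \eqref{HolderConvergenceTheo:eq0} to reach $L^p$ gives the contribution $\kappa(\lambda)^{(p-1)/p}\bigl(e^{K(\lambda)\kappa(\lambda)}-1\bigr)^{1/p}\eta(\lambda)$. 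The pointwise gap $|\mathcal{M}_\lambda V-V|\le C\lambda^{\alpha/(2-\alpha)}$ from \eqref{MoreauEnvProp:item0:2} decays faster than the first term and is absorbed. Summing, the $L^p(\omega_\delta)$ value gap is bounded by the expression whose limit is required to vanish in \eqref{HolderConvergenceTheo:eq2}, which yields \eqref{HolderConvergenceTheo:eq5}.

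I expect the main obstacle to be the balance in the last step. Because $K(\lambda)\sim\frac{C}{\beta\lambda}$ in \eqref{HolderConvergenceTheo:eq3}, the factor $e^{K(\lambda)\kappa(\lambda)}=e^{O(\kappa(\lambda)/\lambda)}$ is astronomically large for small $\lambda$, and no fixed polynomial rate of the gradient error could defeat it. The resolution is exactly the diagonal freedom in choosing $\varepsilon(\lambda)$: for each fixed $\lambda$ the mollification error $\eta(\lambda)$ can be made as small as one wishes simply by mollifying more, so $\eta(\lambda)$ can be driven below $e^{-K(\lambda)\kappa(\lambda)}$, making the product vanish. What makes this feasible is that mollification of $\mathcal{M}_\lambda V$ preserves the semi-concavity constant at $\frac{1}{\lambda}$ (Proposition \ref{prop:hessbound}) rather than letting it deteriorate, which is what keeps $K(\lambda)$ of order $\frac{1}{\lambda}$; together with the condition $\sigma>1-\alpha$ ensuring the stability constants decay, this is the crux of the Hölder case.
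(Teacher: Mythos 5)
Your proposal is correct and follows essentially the same route as the paper's proof: the same two-stage regularization (Moreau envelope to gain semi-concavity constant $\frac{1}{\lambda}$, then mollification preserving it via Proposition \ref{prop:hessbound}), the same diagonal choice of $\varepsilon(\lambda)$, the same escape-time verification through Lemma \ref{LyapunovStabLemma}(c) with the exponent condition $\sigma>1-\alpha$, and the same application of Theorem \ref{ConvTheo2} with $v=\mathcal{M}_\lambda V$ and $\bar v=V_{\varepsilon(\lambda),\lambda}$, splitting the error into the residual $h_\lambda=O(\lambda^{(2\alpha-1)/(2-\alpha)})$, the Grönwall-amplified gradient mismatch, and the Moreau gap. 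Your closing observation that the diagonal freedom in $\eta(\lambda)$ is what defeats the factor $e^{K(\lambda)\kappa(\lambda)}$ is exactly the mechanism the paper relies on.
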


\begin{rem}
    In the previous theorem, an admissible option is choosing $\kappa(s)=\frac{1}{s^{q}}$ with $q\in \left(0,\frac{2\alpha-1}{2-\alpha}\right)$ and $\eta(s) = \exp(-\frac{1}{p}K(s)\kappa(s)-\frac{1}{s^2})$. We notice that in \Cref{HolderConvergenceTheo} more smoothness on the approximation of the value function (i.e. smaller $\varepsilon(\lambda)$) implies a decrement on the bound of the error between $\mathcal{V}_{u_{\lambda},T_{\lambda}}$ and $\mathcal{ M}_{\lambda}V$.
\end{rem}
We close this section with a result concerning the convergence of the trajectories of the closed loop system for a sequence of feedback laws.

\begin{theo}
    \label{ConvTheoTraj}
    Let $\Omega\subset\R^{d}$ be a bounded subset of $\R^{d}$ and choose $y_0\in\Omega$. Consider a sequence of feedback laws $u_{\varepsilon}\in C^{1}(\overline{\Omega};\R^{m})$ and times $T_{\varepsilon}>0$ with $\lim_{\varepsilon\to 0^+}T_{\varepsilon}=\infty$ such that $y([0,T_{\varepsilon}];y_0,u_{\varepsilon})\subset\Omega$ and
    \beq \label{eq:aux5}
    \lim_{\varepsilon\to 0^{+}}\mathcal{V}_{T_{\varepsilon,u_{\varepsilon}}}(y_0)+V(y(T_\varepsilon;y_0,u_{\varepsilon}))=V(y_0),
    \eeq
     and set $y_{\varepsilon}=y(\cdot;y_0,u_{\varepsilon})$. Then, the following statements hold:
     \begin{enumerate}[(a)]
         \item \label{ConvTheoTraj:1}There exists at least one limit point of $(u_{\varepsilon}\circ y_{\varepsilon},y_{\varepsilon})$ converging in the weak topology of $L_{loc}^{2}((0,\infty);\R^{m})\times W^{1,2}_{loc}([0,\infty);{\R^d})$.
         \item \label{ConvTheoTraj:2}For every weak limit point $(u,y)\in L_{loc}^{2}((0,\infty);\R^{m})\times W^{1,2}_{loc}([0,\infty);\R^d)$ of $(u_{\varepsilon}\circ y_{\varepsilon},y_{\varepsilon})$ we have that $u$ is an optimal control of \eqref{ControlProblem} and $y$ is the associated solution of \eqref{OpenLoop}. Consequently,
         \beq \lim_{\varepsilon\to 0^{+}}\mathcal{V}_{T_{\varepsilon,u_{\varepsilon}}}(y_0)=V(y_0)\mbox{ and } \lim_{\varepsilon\to 0^{+}}V(y(T_\varepsilon;y_0,u_{\varepsilon}))=0.
         \label{ConvTheoTraj:cons:eq:1}\eeq

         \item \label{ConvTheoTraj:3}If the solution of \eqref{ControlProblem} is unique, then $u_{\varepsilon}\circ y_{\varepsilon}$ converges to the optimal solution of \eqref{ControlProblem} and $y_{\varepsilon}$ converges weakly to the solution of \eqref{OpenLoop} in $L_{loc}^{2}((0,\infty);\R^{m})$ and $W_{loc}^{1,2}((0,\infty);\R^{d})$, respectively.
     \end{enumerate}
\end{theo}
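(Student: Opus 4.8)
The plan is to treat this as a classical direct-method argument: extract weak limits of the open-loop controls $u_\varepsilon\circ y_\varepsilon$ and of the states $y_\varepsilon$, pass to the limit in the state equation using the \emph{compactness of the states}, and then identify the limit as optimal via weak lower semicontinuity combined with hypothesis \eqref{eq:aux5}. For part (a), I would first derive a uniform energy bound. Since $\ell\ge 0$ and $V\ge 0$, \eqref{eq:aux5} forces $\mathcal V_{u_\varepsilon,T_\varepsilon}(y_0)\le V(y_0)+1$ for all small $\varepsilon$, and hence $\int_0^{T_\varepsilon}\frac{\beta}{2}|u_\varepsilon\circ y_\varepsilon|^2\,dt\le V(y_0)+1$. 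As $T_\varepsilon\to\infty$, this bounds $u_\varepsilon\circ y_\varepsilon$ in $L^2((0,T);\R^m)$ uniformly in $\varepsilon$ on every fixed interval $(0,T)$. Since $y_\varepsilon([0,T_\varepsilon])\subset\Omega$ with $\Omega$ bounded and $f,B$ locally Lipschitz, hence bounded on $\overline\Omega$, the identity $\dot y_\varepsilon=f(y_\varepsilon)+B(y_\varepsilon)(u_\varepsilon\circ y_\varepsilon)$ bounds $\dot y_\varepsilon$ in $L^2(0,T)$, so $y_\varepsilon$ is bounded in $W^{1,2}((0,T);\R^d)$. A diagonal extraction over $T=1,2,\dots$ then yields a subsequence with $u_\varepsilon\circ y_\varepsilon\rightharpoonup u$ in $L^2_{loc}$ and $y_\varepsilon\rightharpoonup y$ in $W^{1,2}_{loc}$; the compact embedding $W^{1,2}(0,T)\hookrightarrow C([0,T])$ upgrades the latter to uniform convergence on compact intervals.

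For part (b), in the integrated state equation $y_\varepsilon(t)=y_0+\int_0^t(f(y_\varepsilon)+B(y_\varepsilon)(u_\varepsilon\circ y_\varepsilon))\,ds$ the only delicate term is the product. Because $y_\varepsilon\to y$ uniformly and $B$ is continuous, $B(y_\varepsilon)\to B(y)$ uniformly and is uniformly bounded, while $u_\varepsilon\circ y_\varepsilon\rightharpoonup u$ weakly in $L^2$; the product of a uniformly convergent, uniformly bounded factor with a weakly convergent factor converges weakly, so $B(y_\varepsilon)(u_\varepsilon\circ y_\varepsilon)\rightharpoonup B(y)u$ in $L^2_{loc}$, and similarly $f(y_\varepsilon)\to f(y)$. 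Passing to the limit identifies $y$ as the solution of \eqref{OpenLoop} driven by $u$. For optimality, fix $T>0$; for $\varepsilon$ with $T_\varepsilon>T$, nonnegativity of the integrand gives $\mathcal V_{u_\varepsilon,T_\varepsilon}(y_0)\ge\int_0^T(\ell(y_\varepsilon)+\frac{\beta}{2}|u_\varepsilon\circ y_\varepsilon|^2)\,dt$. Since $u\mapsto\int_0^T\frac{\beta}{2}|u|^2$ is convex, hence weakly lower semicontinuous, and $\int_0^T\ell(y_\varepsilon)\,dt\to\int_0^T\ell(y)\,dt$ by uniform convergence and continuity of $\ell$, we obtain $\liminf_\varepsilon\mathcal V_{u_\varepsilon,T_\varepsilon}(y_0)\ge\int_0^T(\ell(y)+\frac{\beta}{2}|u|^2)\,dt$. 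On the other hand $\ell,V\ge 0$ and \eqref{eq:aux5} give $\limsup_\varepsilon\mathcal V_{u_\varepsilon,T_\varepsilon}(y_0)\le V(y_0)$. Combining and letting $T\to\infty$ gives $J(y_0,u)\le V(y_0)$ (so in particular $u\in L^2((0,\infty);\R^m)$), while $V(y_0)\le J(y_0,u)$ by definition; hence $J(y_0,u)=V(y_0)$ and $u$ is optimal with associated state $y$.

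For the consequences and part (c): the two-sided bound just obtained shows $\mathcal V_{u_\varepsilon,T_\varepsilon}(y_0)\to V(y_0)$ along the subsequence, and since every subsequence admits a further sub-subsequence with this same limit, the full sequence converges to $V(y_0)$; then \eqref{eq:aux5} forces $V(y(T_\varepsilon;y_0,u_\varepsilon))\to 0$, which is \eqref{ConvTheoTraj:cons:eq:1}. If the optimal solution of \eqref{ControlProblem} is unique, every weak limit point coincides with it, so the standard subsequence principle promotes the convergence to the whole sequence, giving (c). \emph{The main obstacle} is precisely the passage to the limit in the bilinear term $B(y_\varepsilon)(u_\varepsilon\circ y_\varepsilon)$: weak convergence of the controls does not survive products, and the resolution is the compactness of the states supplied by the state equation (a Rellich/Aubin--Lions mechanism), which converts weak convergence of $y_\varepsilon$ into uniform convergence. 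A secondary subtlety is the interchange of the limits $\varepsilon\to 0^+$ and $T\to\infty$, which is handled by the monotone truncation above relying on $\ell\ge 0$.
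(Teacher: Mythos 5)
Your proposal is correct and follows essentially the same route as the paper's proof: a uniform energy bound from \eqref{eq:aux5}, weak compactness plus a diagonal argument over finite horizons, the compact embedding $W^{1,2}\hookrightarrow C$ to pass to the limit in the state equation, weak lower semicontinuity combined with $\ell, V\ge 0$ to conclude $J(y_0,u)\le V(y_0)$, and the subsequence principle for \eqref{ConvTheoTraj:cons:eq:1} and part (c). The only difference is that you spell out the convergence of the bilinear term $B(y_\varepsilon)(u_\varepsilon\circ y_\varepsilon)$ in more detail than the paper, which simply invokes the uniform convergence of $y_\varepsilon$.
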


\begin{cor} \label{Corconv}
  Consider $\omega\subset\Omega$, with $\Omega$  be bounded,  $p\in [1,\infty)$, and  a sequence of feedbacks $u_{\varepsilon}\in C^{2}(\overline{\Omega};\R^{m})$ and times $T_{\varepsilon}>0$ with $\lim_{\varepsilon\to 0^+}T_{\varepsilon}=\infty$. If  we have $ y([0,T_{\varepsilon}];\omega,u_{\varepsilon})\subset\Omega$, for all $\varepsilon$ sufficiently small,  and
     \beq \lim_{\varepsilon\to 0^{+}}\norm{\mathcal{V}_{T_{\varepsilon,u_{\varepsilon}}}+V\circ y(T_\varepsilon;\cdot,u_\varepsilon) -V}_{L^{p}(\omega)}=0,\label{ConvTheoTraj:eq:1}\eeq
     then there exist a sub-sequences of $u_{\varepsilon}$ and $T_{\varepsilon}$ such that \eqref{ConvTheoTraj:1}, \eqref{ConvTheoTraj:2} and \eqref{ConvTheoTraj:3} hold for almost every $y_{0}\in \omega$. Additionally, if $p=\infty$, then \eqref{ConvTheoTraj:1}, \eqref{ConvTheoTraj:2} and \eqref{ConvTheoTraj:3} hold for every $y_0\in \omega$.
\end{cor}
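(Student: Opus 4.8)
The plan is to reduce the corollary to a pointwise application of \Cref{ConvTheoTraj} via the elementary fact that $L^p$ convergence forces almost everywhere convergence along a subsequence. For each $\varepsilon$ small enough that $y([0,T_\varepsilon];\omega,u_\varepsilon)\subset\Omega$, I would introduce the error function
\[
F_\varepsilon(y_0)=\mathcal{V}_{u_\varepsilon,T_\varepsilon}(y_0)+V(y(T_\varepsilon;y_0,u_\varepsilon))-V(y_0),\qquad y_0\in\omega .
\]
Since $u_\varepsilon\in C^1(\overline\Omega;\R^m)$ and $f,B$ are locally Lipschitz, the closed-loop field $f+Bu_\varepsilon$ is locally Lipschitz, so \eqref{closedloopproblem} is well posed and its flow $y_0\mapsto y(\cdot;y_0,u_\varepsilon)$ depends continuously on the initial datum on the set where the trajectory remains in $\Omega$; together with the continuity of $\ell$, $u_\varepsilon$ and $V$ this makes each $F_\varepsilon$ continuous on $\omega$. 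Hypothesis \eqref{ConvTheoTraj:eq:1} states precisely that $\norm{F_\varepsilon}_{L^p(\omega)}\to 0$ as $\varepsilon\to 0^+$.

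For $p\in[1,\infty)$ I would invoke the standard measure-theoretic fact (contained in the proof of the Riesz--Fischer theorem) that a sequence converging in $L^p$ admits a subsequence converging almost everywhere: this yields $\varepsilon_n\to 0^+$ and a null set $N\subset\omega$ with $F_{\varepsilon_n}(y_0)\to 0$ for every $y_0\in\omega\setminus N$. Fixing such a $y_0$, one has $T_{\varepsilon_n}\to\infty$, the inclusion $y([0,T_{\varepsilon_n}];y_0,u_{\varepsilon_n})\subset\Omega$ holds for $n$ large, and $F_{\varepsilon_n}(y_0)\to 0$ is exactly the hypothesis \eqref{eq:aux5} of \Cref{ConvTheoTraj} for the sequence $(u_{\varepsilon_n},T_{\varepsilon_n})$. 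Applying \Cref{ConvTheoTraj} at $y_0$ then gives \eqref{ConvTheoTraj:1}, \eqref{ConvTheoTraj:2} and \eqref{ConvTheoTraj:3} at $y_0$; as this holds for every $y_0\in\omega\setminus N$, it holds for almost every $y_0\in\omega$ along the single subsequence $(u_{\varepsilon_n},T_{\varepsilon_n})$, which is the asserted subsequence.

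For $p=\infty$ no subsequence is needed. Because each $F_\varepsilon$ is continuous on the open set $\omega$, its essential supremum coincides with its supremum, so $\norm{F_\varepsilon}_{L^\infty(\omega)}=\sup_{y_0\in\omega}|F_\varepsilon(y_0)|\to 0$; that is, $F_\varepsilon\to 0$ uniformly and hence pointwise at every $y_0\in\omega$. For each fixed $y_0\in\omega$ the hypotheses of \Cref{ConvTheoTraj} are satisfied along the full family, and the theorem delivers \eqref{ConvTheoTraj:1}, \eqref{ConvTheoTraj:2} and \eqref{ConvTheoTraj:3} at every $y_0\in\omega$.

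The argument is essentially bookkeeping once \Cref{ConvTheoTraj} is available; the only points that require care are (i) producing one subsequence that works for almost every initial condition \emph{simultaneously}, which is exactly what the a.e.-convergent subsequence supplies and which a pointwise-in-$y_0$ extraction could not guarantee, and (ii) the passage from essential supremum to genuine supremum in the case $p=\infty$, which rests on the continuity of $F_\varepsilon$, i.e. on well-posedness and continuous dependence of the closed-loop ODE and on the continuity of $V$.
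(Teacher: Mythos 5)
Your argument is correct and coincides with the paper's own proof: both extract an a.e.-convergent subsequence from the $L^p$ convergence in \eqref{ConvTheoTraj:eq:1} to verify \eqref{eq:aux5} pointwise and then apply \Cref{ConvTheoTraj}, and both handle $p=\infty$ via continuity of $\mathcal{V}_{u_\varepsilon,T_\varepsilon}$ and $V$ to upgrade essential-sup convergence to everywhere-pointwise convergence. Your version merely spells out the continuity of the error function and the single-subsequence bookkeeping in more detail than the paper does.
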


\section{Error estimates}
\label{Sec:ErrorEs}
In this section we provide  estimates on the approximation properties of $\mathcal{V}_{u,T}$ towards a  super-solution of the HJB equation \eqref{HJBeq} or an approximation thereof. That is, we consider super-solutions of the following equation
\beq
    \mathcal{H}(y,\nabla v(y))+g(y)=0, \label{HJBeqApprox}
    \eeq
where $g\in C(\Omega)$. We notice that the Moreau envelope of the value function is a super-solution of an equation of this type, see \eqref{rem:MoreauEnv:HJB:approxeq}, \eqref{rem:MoreauEnv:HJB:g}. Below the results are derived in a general setting, with $v$ playing the role which will eventually be played by $V$ and an approximation  $\bar v$  of $V$.
The results differ by the regularity assumption on  $v$.  We start with an estimate for  super-solutions of class $C^{1}$. For the convenience of the reader we recall that  $u_{\bar v}(y)=-\frac{1}{\beta}B^{\top}(\bar{y})\nabla \bar v(\bar{y})$.

\begin{lemma}
\label{lemma:main:estimate}
     Let $v\in C^{1}(\Omega)$ be a super-solution of \eqref{HJBeqApprox} with $g\in C(\overline{\Omega})$. Consider $\bar{v}\in C^{1,1}(\Omega)$, $y_0\in \Omega$, and $T$ such that $y(\cdot;y_0,u_{\bar{v}})$ exists on $[0,T]$ and satisfies   $y([0,T];y_0,u_{\bar{v}})\subset\Omega$. Then we have
    \beq
    \mathcal{V}_{u_{\bar{v}},T}(y_0)+v(y(T;y_0,u_{\bar{v}}))-v(y_0)\leq \int_{0}^{T}g(y(t;y_0,u_{\bar{v}})) dt+\beta\int_{0}^{T}|u_{v}(y(t;y_0,u_{\bar{v}}))-u_{\bar{v}}(y(t;y_0,u_{\bar{v}}))|^{2}dt
    \label{lemma:main:estimate:eq}
    \eeq
\end{lemma}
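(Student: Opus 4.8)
The plan is to reduce the whole estimate to a \emph{pointwise} inequality for the integrand along the closed-loop trajectory $y(t):=y(t;y_0,u_{\bar v})$ and then integrate. First I would record that, since $\bar v\in C^{1,1}(\Omega)$, the feedback $u_{\bar v}=-\frac1\beta B^{\top}\nabla\bar v$ is locally Lipschitz, so \eqref{closedloopproblem} has a unique $C^1$ trajectory on $[0,T]$, which by assumption stays in $\Omega$. Because $v\in C^1(\Omega)$, the map $t\mapsto v(y(t))$ is then $C^1$, and the chain rule together with the fundamental theorem of calculus gives
\[
v(y(T))-v(y_0)=\int_0^T \nabla v(y(t))\cdot\big(f(y(t))+B(y(t))u_{\bar v}(y(t))\big)\,dt .
\]

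Adding $\mathcal{V}_{u_{\bar v},T}(y_0)=\int_0^T\big(\ell(y)+\tfrac\beta2|u_{\bar v}(y)|^2\big)\,dt$ and comparing with the definition \eqref{Hamiltonian} of $H$, I would observe that the combined integrand is exactly $-H(y(t),\nabla v(y(t)),u_{\bar v}(y(t)))$, so that
\[
\mathcal{V}_{u_{\bar v},T}(y_0)+v(y(T))-v(y_0)=-\int_0^T H\big(y(t),\nabla v(y(t)),u_{\bar v}(y(t))\big)\,dt .
\]
Thus everything hinges on an upper bound for $-H(y,\nabla v(y),u_{\bar v}(y))$ pointwise in $y$.

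The key algebraic step is completing the square in the control variable. Writing $p=\nabla v(y)$ and recalling $u_v(y)=-\frac1\beta B(y)^{\top}p$, the function $H(y,p,\cdot)$ is a strictly concave quadratic maximized at $u=u_v(y)$ with maximal value $\mathcal H(y,p)$, which yields the identity
\[
\mathcal{H}(y,p)-H(y,p,u)=\tfrac\beta2\,|u-u_v(y)|^2\qquad\text{for all }u\in\R^{m}.
\]
Taking $u=u_{\bar v}(y)$ gives $-H(y,\nabla v(y),u_{\bar v}(y))=-\mathcal H(y,\nabla v(y))+\tfrac\beta2|u_{\bar v}(y)-u_v(y)|^2$. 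Finally I would invoke the super-solution property: since $v\in C^1(\Omega)$ is a viscosity super-solution of \eqref{HJBeqApprox}, testing the definition with $\phi=v$ (for which $v-\phi\equiv0$ attains a local minimum everywhere) shows $\mathcal H(y,\nabla v(y))+g(y)\ge0$, i.e. $-\mathcal H(y,\nabla v(y))\le g(y)$. Substituting and using $\tfrac\beta2\le\beta$ produces the integrand bound $-H(y,\nabla v(y),u_{\bar v}(y))\le g(y)+\beta|u_{\bar v}(y)-u_v(y)|^2$, and integrating over $[0,T]$ gives \eqref{lemma:main:estimate:eq}.

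The argument is essentially routine once these three ingredients are aligned; the only points demanding care are the sign conventions in $H$ and $\mathcal H$, the justification of the chain rule (which rests on the $C^{1,1}$ regularity of $\bar v$ to guarantee a genuine $C^1$ trajectory on which $v$ may be differentiated along $t$), and the observation that the pointwise super-solution inequality is available precisely because $v$ is $C^1$. No compactness or finer viscosity-solution machinery is needed beyond this pointwise reading, and I note that the square-completion in fact delivers the sharper constant $\tfrac\beta2$, the stated $\beta$ being a convenient weakening.
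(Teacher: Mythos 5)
Your proof is correct, and it is organized around the same basic strategy as the paper's: derive a pointwise inequality for the integrand along the closed-loop trajectory and integrate, using the chain rule for $t\mapsto v(y(t))$ and the fact that a $C^1$ viscosity super-solution satisfies the inequality pointwise. The algebraic core, however, is genuinely different. The paper first invokes the optimality of $\bar u=u_{\bar v}$ for the Hamiltonian built with $\nabla\bar v$, then substitutes the super-solution property of $v$ at $u=u_v$, and finally manipulates the cross term $(\nabla\bar v-\nabla v)(f+Bu)$ to identify $\beta|u-\bar u|^2$; in doing so it discards the term $\tfrac{\beta}{2}|u-\bar u|^2$ coming from the strict concavity of $H(y,\nabla\bar v,\cdot)$, which is why it lands on the constant $\beta$. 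You instead complete the square once, directly in $H(y,\nabla v,\cdot)$, via the identity $\mathcal H(y,p)-H(y,p,u)=\tfrac{\beta}{2}|u-u_v(y)|^2$, and never touch $\nabla\bar v$ except through $u_{\bar v}$. This is cleaner, avoids the cross-term bookkeeping entirely, and yields the sharper constant $\tfrac{\beta}{2}$ in place of $\beta$ (consistent with the stated bound since $\tfrac{\beta}{2}\le\beta$); it also makes transparent that only $u_{\bar v}$, not the regularity of $\bar v$ beyond what is needed for the trajectory to exist, enters the estimate. One small correction of emphasis: the $C^1$ character of the trajectory follows from the assumed existence of a solution to the ODE with continuous right-hand side; the $C^{1,1}$ regularity of $\bar v$ is only responsible for uniqueness, not for the validity of the chain rule.
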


\begin{proof}
    To abbreviate the notation, we set $\bar{y}(t)=y(t;y_0,u_{\bar{v}})$, $\bar{u}(t)=u_{\bar{v}}(\bar{y}(t))$, and $u(t)=u_{v}(\bar{y}(t))$. In particular, $\bar{y}$ satisfies $\bar{y}'=f(\bar{y})+B(\bar{y})\bar{u}$ and $\bar{y}(0)=y_0$. Here and below the dependence of the state and control variables on $t$ is not indicated. By  \eqref{uv:def:rem} we find
    $$ \nabla \bar{v}(\bar{y})(f(\bar{y})+B(\bar{y})\bar{u}) + \ell (\bar{y})+\frac{\beta}{2}|\bar{u}|^{2}\leq \nabla \bar v(\bar{y})(f(\bar{y})+B(\bar{y})u) + \ell (\bar{y})+\frac{\beta}{2}|u|^{2} \text{ for all } u\in \R^m.$$
    Using that $v$ is a super-solution of \eqref{HJBeqApprox} on the right-hand side of the previous inequality we obtain
    \beq \nabla \bar{v}(\bar{y})(f(\bar{y})+B(\bar{y})\bar{u}) + \ell (\bar{y})+\frac{\beta}{2}|\bar{u}|^{2}\leq (\nabla \bar{v}(\bar{y})-\nabla v(\bar{y}))(f(\bar{y})+B(\bar{y})u)+g(\bar{y}).\label{lemma:main:estimate:proof:1}\eeq
    Here we used that $u=-\frac{1}{\beta}B^{\top}(\bar{y})\nabla v(\bar{y})$. This and $\bar{u}=-\frac{1}{\beta}B^{\top}(\bar{y})\nabla \bar{v}(\bar{y})$, will  be used in the following equality
    $$\begin{array}{l}
         \dis (\nabla \bar{v}(\bar{y})-\nabla v(\bar{y})))(f(\bar{y})+B(\bar{y})u)= \frac{1}{\beta}|B(\bar{y})^{\top}(\nabla v(\bar{y})-\nabla \bar{v}(\bar{y}))|^{2}+ (\nabla \bar{v}(\bar{y})-\nabla v(\bar{y}))(f(\bar{y})+B(\bar{y})\bar{u})\\
        \ecart\dis =\beta|u-\bar{u}|^{2}+ (\nabla \bar{v}(\bar{y})-\nabla v(\bar{y}))(f(\bar{y})+B(\bar{y})\bar{u}).
    \end{array}$$
    Together with \eqref{lemma:main:estimate:proof:1} we arrive at
    $$ \nabla v(\bar{y})(f(\bar{y})+B(\bar{y})\bar{u}) + \ell (\bar{y})+\frac{\beta}{2}|\bar{u}|^{2}\leq \beta|u-\bar{u}|^{2}+g(\bar{y}).$$
    Integrating from $0$ to $T$ in the previous inequality we arrive at \eqref{lemma:main:estimate:eq}.
\end{proof}
The next theorem is a direct consequence of this lemma. It  gives an $L^{\infty}$ estimation of the approximation property of $\mathcal{V}_{T,\bar{v}}$, for $\bar v$ of class $C^{1,1}$,  and a super solution of \eqref{HJBeqApprox} of class $C^1$.

\begin{theo}
\label{ConvTheo1}
Let $ {\Omega_1} \subset \R^d$  be an open and bounded set with $C^1$ boundary, $\omega\Subset\Omega_1$ open, $v\in C^{1}(\Omega_1)$ be a super-solution of \eqref{HJBeqApprox} in $\Omega_1$ with $g\in C(\overline{\Omega_1})$ and let \Cref{hypo:scapetime} hold with $u=u_{\bar{v}}$ for $\bar{v}\in C^{1,1}(\overline{\Omega_1})$. Then we have
$$\sup_{y_0\in\omega}\left\{\mathcal{V}_{u_{\bar{v}},T}(y_0)+v( y(T;\cdot,u_{\bar{v}})(y_0)) -v(y_0)\right\}\leq T\left(\sup_{x\in \Omega_1} g(x)^{+}+ \beta \norm{u_{\bar{v}}-u_v}^{2}_{L^{\infty}(\Omega_1)}\right).$$
\end{theo}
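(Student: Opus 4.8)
The plan is to apply Lemma \ref{lemma:main:estimate} pointwise in $y_0$ and then dominate the two integrals on its right-hand side by their worst-case values over $\Omega$. First I would fix an arbitrary $y_0\in\omega$ and verify the hypotheses of Lemma \ref{lemma:main:estimate}: since $\bar v\in C^{1,1}(\overline{\Omega})$ the closed-loop vector field $y\mapsto f(y)+B(y)u_{\bar v}(y)$ is locally Lipschitz, so $y(\cdot;y_0,u_{\bar v})$ is well defined, and Hypothesis \ref{hypo:scapetime}, invoked with $u=u_{\bar v}$, $\omega$ and $T$, guarantees that this trajectory exists on $[0,T]$ and satisfies $y([0,T];y_0,u_{\bar v})\subset\Omega$. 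Hence the lemma applies and yields
\[
\mathcal{V}_{u_{\bar v},T}(y_0)+v(y(T;y_0,u_{\bar v}))-v(y_0)\le \int_0^T g(y(t;y_0,u_{\bar v}))\,dt+\beta\int_0^T \left|u_v(y(t;y_0,u_{\bar v}))-u_{\bar v}(y(t;y_0,u_{\bar v}))\right|^2 dt.
\]

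Next I would bound each integral by replacing the integrand with its supremum over $\Omega$. Because $y(t;y_0,u_{\bar v})\in\Omega$ for every $t\in[0,T]$, we have $g(y(t;y_0,u_{\bar v}))\le g(y(t;y_0,u_{\bar v}))^{+}\le \sup_{x\in\Omega}g(x)^{+}$, so the first integral is at most $T\sup_{x\in\Omega}g(x)^{+}$. Likewise the pointwise bound $|u_v(y(t;y_0,u_{\bar v}))-u_{\bar v}(y(t;y_0,u_{\bar v}))|^2\le \norm{u_{\bar v}-u_v}^2_{L^{\infty}(\Omega)}$ shows that the second integral is at most $\beta T\norm{u_{\bar v}-u_v}^2_{L^{\infty}(\Omega)}$. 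Adding these two bounds produces a right-hand side that is independent of $y_0$, and taking the supremum over $y_0\in\omega$ then gives the claimed inequality.

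Since every step is a monotone integral estimate resting on Lemma \ref{lemma:main:estimate}, there is no genuine obstacle here; the only point requiring care is the verification that the trajectory remains in $\Omega$, so that the $\Omega$-suprema of $g^{+}$ and of $|u_{\bar v}-u_v|^2$ legitimately dominate the integrands, and this is exactly what Hypothesis \ref{hypo:scapetime} supplies. I would also note in passing that writing $g^{+}$ rather than $g$ is what permits stating the bound with a one-sided supremum, which is harmless because the estimate in the lemma is itself an upper bound.
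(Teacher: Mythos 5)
Your argument is correct and is exactly the route the paper takes: Theorem \ref{ConvTheo1} is stated there as a direct consequence of Lemma \ref{lemma:main:estimate}, obtained by bounding the two integrals on its right-hand side by $T$ times the suprema of $g^{+}$ and $\beta|u_{\bar v}-u_v|^2$ over $\Omega$, which is legitimate precisely because Hypothesis \ref{hypo:scapetime} keeps the trajectory in $\Omega$. No gaps.
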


Our next estimation is of vital importance for the case when the value function is not differentiable. It will be used with $ \Omega_1 = \omega_\delta.$

\begin{theo}
\label{ConvTheo2}
Let $ {\Omega_1} \subset \Omega$  be an open and bounded set with $C^1$ boundary, $\omega\Subset\Omega_1$ open, $v\in Lip(\overline{\Omega_1})$ be a super-solution of \eqref{HJBeqApprox} in $\Omega_1$ with $g\in C(\overline{ \Omega_1})$, and  let $\bar{v}\in C^{2}(\overline{\Omega_1})$ be such that for some constant $C>0$
\beq -tr(B(y)Du_{\bar{v}}(y))\leq C\mbox{ for all }y\in\Omega_1.
\label{ConvTheo2:eq0}\eeq
Let $\omega\Subset\Omega_1$ and \Cref{hypo:scapetime} holds true  with $u=u_{\bar{v}}$, then  for all $p\in [1,\infty)$ the following inequality holds
$$\norm{\left(\mathcal{V}_{u_{\bar{v}},T}+v( y(T;\cdot,u_{\bar v})) -v\right)^{+}}_{L^{p}(\omega)}\leq T|\omega|^{\frac{1}{p}}\norm{g}_{L^{\infty}(\Omega_1)}+T^{\frac{p-1}{p}}\beta\left(\frac{e^{KT}-1}{K}\right)^{\frac{1}{p}} \norm{u_v-u_{\bar{v}}}^{2}_{L^{2p}(\Omega_1;\R^{m})},$$
where
\beq K=\left(C+d\norm{f}_{Lip(\overline{\Omega_1};\R^{d})}+dm\norm{B}_{Lip(\overline{\Omega_1};\R^{d\times m})}\norm{u}_{L^{\infty}(\Omega_1;\R^{m})}\right).\label{ConvTheo2:K}\eeq
\end{theo}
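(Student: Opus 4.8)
The plan is to reduce to the pointwise computation behind Lemma \ref{lemma:main:estimate} and then integrate along the closed-loop trajectories, the one genuinely new difficulty being that $v$ is only Lipschitz rather than $C^1$. First I would record the pointwise inequality. Since $v\in Lip(\overline{\Omega_1})$, Rademacher's theorem gives differentiability a.e., and at every point of differentiability the viscosity super-solution property forces $\mathcal H(y,\nabla v(y))+g(y)\ge 0$. Repeating verbatim the algebra in the proof of Lemma \ref{lemma:main:estimate}---optimality of $u_{\bar v}$ through Remark \ref{rem1}, the super-solution inequality, and the identity $(\nabla\bar v-\nabla v)(f+Bu_v)=(\nabla\bar v-\nabla v)(f+Bu_{\bar v})+\beta|u_v-u_{\bar v}|^2$---yields, for a.e. $y\in\Omega_1$,
\begin{equation*}
\nabla v(y)\cdot\bigl(f(y)+B(y)u_{\bar v}(y)\bigr)+\ell(y)+\tfrac{\beta}{2}|u_{\bar v}(y)|^2\le \beta|u_v(y)-u_{\bar v}(y)|^2+g(y).
\end{equation*}

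The core step is to integrate this along the trajectory $\bar y(\cdot;y_0)=y(\cdot;y_0,u_{\bar v})$ for a.e. initial datum $y_0\in\omega$. Writing $\Phi_t(y_0)=\bar y(t;y_0)$, the field $F=f+Bu_{\bar v}$ is Lipschitz on $\overline{\Omega_1}$ (here $u_{\bar v}=-\tfrac1\beta B^\top\nabla\bar v$ is Lipschitz because $\bar v\in C^2$), so each $\Phi_t$ is a bi-Lipschitz homeomorphism onto its image. Let $N\subset\Omega_1$ be the null set where $v$ is not differentiable or the super-solution inequality fails. Since $\Phi_t^{-1}$ is Lipschitz it maps the null set $N$ to a null set, so Fubini gives
\begin{equation*}
\int_\omega \bigl|\{t\in[0,T]:\Phi_t(y_0)\in N\}\bigr|\,dy_0=\int_0^T\bigl|\omega\cap\Phi_t^{-1}(N)\bigr|\,dt=0,
\end{equation*}
whence for a.e. $y_0$ the trajectory meets $N$ only on a null set of times. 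For such $y_0$ the curve $\bar y(\cdot;y_0)$ is $C^1$ and $t\mapsto v(\bar y(t;y_0))$ is absolutely continuous with $\frac{d}{dt}v(\bar y)=\nabla v(\bar y)\cdot F(\bar y)$ for a.e. $t$; integrating the pointwise inequality over $[0,T]$ and using $\int_0^T(\ell(\bar y)+\tfrac\beta2|u_{\bar v}(\bar y)|^2)\,dt=\mathcal V_{u_{\bar v},T}(y_0)$ reproduces, now for a.e. $y_0\in\omega$, the estimate of Lemma \ref{lemma:main:estimate}:
\begin{equation*}
\mathcal V_{u_{\bar v},T}(y_0)+v(\bar y(T;y_0))-v(y_0)\le \int_0^T g(\bar y(t;y_0))\,dt+\beta\int_0^T|u_v(\bar y)-u_{\bar v}(\bar y)|^2\,dt.
\end{equation*}

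It remains to take positive parts and the $L^p(\omega)$ norm. The right-hand side above is nonnegative, so it dominates the positive part of the left-hand side. Bounding $g\le\norm{g}_{L^\infty(\Omega_1)}$ controls the first term by $T|\omega|^{1/p}\norm{g}_{L^\infty(\Omega_1)}$. For the second, set $\psi=|u_v-u_{\bar v}|^2$ and apply Hölder in time, $\bigl(\int_0^T\psi\,dt\bigr)^p\le T^{p-1}\int_0^T\psi^p\,dt$, then Fubini, then for each $t$ the change of variables $z=\Phi_t(y_0)$. The Jacobian $J_t=\det D\Phi_t$ satisfies the Liouville identity $J_t=\exp\bigl(\int_0^t\operatorname{div}F(\Phi_s)\,ds\bigr)$, and from $\operatorname{div}F=\operatorname{div}f+\sum_{i,j}(\partial_iB_{ij})(u_{\bar v})_j+\operatorname{tr}(B\,Du_{\bar v})$ together with \eqref{ConvTheo2:eq0} one gets $-\operatorname{div}F\le C+d\norm{f}_{Lip}+dm\norm{B}_{Lip}\norm{u}_{L^\infty}=K$ a.e., hence $J_t\ge e^{-Kt}$ and $1/J_t\le e^{Kt}$. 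Since the trajectories stay in $\Omega_1$ by Hypothesis \ref{hypo:scapetime}, this gives $\int_\omega\psi(\Phi_t(y_0))^p\,dy_0\le e^{Kt}\int_{\Omega_1}\psi^p=e^{Kt}\norm{u_v-u_{\bar v}}_{L^{2p}(\Omega_1)}^{2p}$, and integrating $\int_0^Te^{Kt}\,dt=\tfrac{e^{KT}-1}{K}$, taking the $p$-th root, multiplying by $\beta$, and combining the two terms by the triangle inequality in $L^p(\omega)$ yields exactly the asserted bound.

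The step I expect to be the main obstacle is the measure-theoretic heart of the second paragraph: with $v$ only Lipschitz, the naive chain rule $\frac{d}{dt}v(\bar y)=\nabla v(\bar y)\cdot\bar y'$ along a single trajectory can fail if that trajectory spends positive time in the non-differentiability set $N$, and it is precisely the bi-Lipschitz change-of-variables and Fubini argument that rescues it for a.e. $y_0$. A secondary technical point is establishing the Liouville identity and the bound $J_t\ge e^{-Kt}$ for the merely Lipschitz flow $F$, whose divergence and whose $Du_{\bar v}$ exist only almost everywhere.
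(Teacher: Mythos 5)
Your proof is correct, but it takes a genuinely different route from the paper's. The paper keeps all the analysis at the $C^1$ level: it uses the $C^1$ regularity of $\partial\Omega_1$ to approximate $v$ by functions $v_\varepsilon\in C^1(\overline{\Omega_1})$ in $W^{1,2p}(\Omega_1)\cap C(\overline{\Omega_1})$, introduces the defect $h_\varepsilon(y)=-\min\{0,\mathcal H(y,\nabla v_\varepsilon(y))+g(y)\}$ (which tends to $0$ in $L^p(\Omega_1)$ because $v$ is a super-solution), applies Lemma \ref{lemma:main:estimate} to each $v_\varepsilon$, estimates via Minkowski, Jensen and Lemma \ref{LemmaInt}, and passes to the limit $\varepsilon\to 0^+$. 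You instead work directly with the Lipschitz $v$: Rademacher plus the standard fact that a viscosity super-solution satisfies the inequality classically at every point of differentiability gives you the pointwise estimate of Lemma \ref{lemma:main:estimate} a.e.\ in $\Omega_1$, and the genuinely new ingredient is your Fubini argument — the bi-Lipschitz flow maps null sets to null sets, so for a.e.\ $y_0$ the trajectory meets the non-differentiability set only on a null set of times, which validates the chain rule $\frac{d}{dt}v(\bar y)=\nabla v(\bar y)\cdot\bar y'$ for the absolutely continuous map $t\mapsto v(\bar y(t))$. From there your Hölder-in-time and Liouville/Jacobian steps reproduce Lemma \ref{LemmaInt} essentially verbatim, including the same constant $K$. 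What your route buys: you never use the $C^1$ boundary hypothesis on $\Omega_1$ (it is needed in the paper only to invoke the global Sobolev approximation from Evans), you avoid the defect term and the limit passage, and the conclusion holds for a.e.\ $y_0$ directly rather than through an $L^p$ limit. What the paper's route buys: it reuses Lemma \ref{lemma:main:estimate} as a black box and sidesteps the measure-theoretic chain-rule issue entirely, at the cost of the boundary regularity assumption and the $h_\varepsilon$ bookkeeping. Both proofs share the same loose point in the Liouville identity for a flow whose field is only Lipschitz (the paper's Lemma \ref{LemmaInt} formally assumes $C^2$ data but the Jacobian formula is used with merely a.e.-defined derivatives of $f$ and $B$), so you are not worse off there than the original.
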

This result is based on the following technical lemma which concerns the integration along the trajectories of the closed loop problem \eqref{closedloopproblem} for a $C^{1}$ feedback law. The proof of \Cref{ConvTheo2} is given after the proof of this lemma.
\begin{lemma}
\label{LemmaInt}
    Let $\Omega_1\Subset\Omega$ open and  $\omega\Subset\Omega_1$, let $u\in C^{2}(\overline{\Omega_1};\R^{M})$ be such that there exist a positive constant $C>0$ satisfying
    \beq -tr(B(y)Du(y))\leq C \mbox{ in }\Omega_1\label{LemmaInt:eq1},\eeq
    and let \Cref{hypo:scapetime} with $u=u_{\bar{v}}$ hold true. Then for all $\phi\in C(\overline{\Omega_1};\R^{+})$
    $$ \int_{\omega}\int_{0}^{T}\phi(y(t;y_0,u_{\bar{v}})) dt dy_0 \leq  \frac{e^{KT}-1}{K}\int_{\Omega_1}\phi(z)dz$$
   holds,  where
    $$ K=\left(C+d\norm{f}_{Lip(\overline{\Omega_1};\R^{d})}+dm\norm{B}_{Lip(\overline{\Omega_1};\R^{d\times m})}\norm{u_{\bar{v}}}_{L^{\infty}(\Omega_1;\R^{m})}\right),$$
    and $\R^{m}$ is endowed with the $\ell^\infty$-norm.
\end{lemma}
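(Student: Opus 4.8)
The plan is to convert integration along the closed-loop trajectories into a single integral over $\Omega_1$ by means of the change-of-variables formula for the flow map, the price being the inverse Jacobian of the flow, which I bound using a Liouville-type identity. Write $F(y)=f(y)+B(y)u_{\bar{v}}(y)$ for the closed-loop vector field and denote by $\Phi_t(y_0)=y(t;y_0,u_{\bar{v}})$ the associated flow map. By \Cref{hypo:scapetime} the trajectories starting in $\omega$ remain in $\Omega_1$ for every $t\in[0,T]$, and since $F$ is locally Lipschitz, $\Phi_t$ is a bi-Lipschitz homeomorphism of $\omega$ onto $\Phi_t(\omega)$ with inverse $\Phi_{-t}$; hence $\Phi_t$ is differentiable almost everywhere with Jacobian $J_t(y_0)=D_{y_0}\Phi_t(y_0)$.

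First I would record the Liouville identity $\det J_t(y_0)=\exp\!\big(\int_0^t \dive F(\Phi_s(y_0))\,ds\big)$, so that $\det J_t$ is positive and a lower bound on it follows from a lower bound on $\dive F$. Computing the divergence of the field, which is affine in $u$, gives almost everywhere $\dive F=\dive f+\sum_{i,j}(\partial_i B_{ij})u_{\bar{v},j}+tr(B\,Du_{\bar{v}})$, where the last term uses $tr(B\,Du)=\sum_{i,j}B_{ij}\partial_i u_j$. Bounding each term from below — using $|\partial_i f_i|\le\norm{f}_{Lip(\overline{\Omega_1};\R^{d})}$ and $|\partial_i B_{ij}|\le\norm{B}_{Lip(\overline{\Omega_1};\R^{d\times m})}$ a.e., together with $|u_{\bar{v},j}|\le\norm{u_{\bar{v}}}_{L^{\infty}(\Omega_1;\R^{m})}$ (this is exactly where the $\ell^\infty$-norm on $\R^m$ is used, to avoid spurious dimensional factors), and the hypothesis $-tr(B\,Du_{\bar{v}})\le C$ — yields $\dive F\ge -K$ a.e. with $K$ precisely as in the statement. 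Consequently $\det J_t(y_0)\ge e^{-Kt}$ for all $y_0$ and all $t\in[0,T]$.

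With this in hand, for fixed $t$ I change variables $z=\Phi_t(y_0)$, so that $y_0=\Phi_{-t}(z)$ and $dy_0=(\det J_t(\Phi_{-t}(z)))^{-1}\,dz$, to obtain $\int_\omega \phi(\Phi_t(y_0))\,dy_0=\int_{\Phi_t(\omega)}\phi(z)\,(\det J_t(\Phi_{-t}(z)))^{-1}\,dz\le e^{Kt}\int_{\Omega_1}\phi(z)\,dz$, where the positivity of $\phi$, the Jacobian lower bound, and the inclusion $\Phi_t(\omega)\subset\Omega_1$ are all used. Applying Tonelli's theorem to the nonnegative integrand to exchange the order of integration and then integrating this bound in $t$ over $[0,T]$ gives $\int_0^T e^{Kt}\,dt=\frac{e^{KT}-1}{K}$, which is exactly the claimed estimate.

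The main obstacle is the regularity gap: since $f$ and $B$ are only locally Lipschitz, $\dive F$ exists merely almost everywhere and the classical Liouville identity together with the smooth change of variables is not immediately justified. I would resolve this by mollifying $f$ and $B$ to smooth fields $f_n,B_n$ whose Lipschitz seminorms do not exceed those of $f,B$, so that $F_n=f_n+B_nu_{\bar{v}}$ is $C^1$ and every step above holds classically with a constant $K_n$; since $B_n\to B$ uniformly on $\overline{\Omega_1}$ and $Du_{\bar{v}}$ is continuous, one has $-tr(B_n\,Du_{\bar{v}})\le C+o(1)$ and hence $K_n\to K$. Passing to the limit is then routine: the flows converge uniformly on $[0,T]\times\overline{\omega}$ by Gronwall stability, the trajectories remain in $\Omega_1$ for $n$ large, and dominated convergence transfers the inequality while the constant $K_n$ converges to $K$. (Alternatively, one may invoke the theory of regular Lagrangian flows, for which the Jacobian lower bound is available directly for Lipschitz fields.)
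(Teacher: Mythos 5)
Your proposal is correct and follows essentially the same route as the paper's proof: the Liouville identity for the Jacobian of the closed-loop flow, the lower bound $\det D_{y_0}y(t;y_0,u_{\bar v})\geq e^{-Kt}$ obtained from the divergence estimate using \eqref{LemmaInt:eq1}, and the change of variables combined with Fubini/Tonelli. Your additional mollification step to justify the Liouville formula for merely locally Lipschitz $f$ and $B$ is a careful refinement of a point the paper passes over, not a different argument.
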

\begin{proof}[Proof of \Cref{LemmaInt}]
We know that for all $t\in [0,T]$ the mapping $y_0\mapsto y(t;y_0,u)$ is $C^{1}$ in $\omega$, and its differential satisfies
$$ \frac{d}{dt}D_{y_0}y(t;y_0,u)= A(y(t;y_0,u))\cdot D_{y_0}y(t;y_0,u) , \  D_{y_0}y(0;y_0,u)=I_{d\times d}, $$
where $u=u_{\bar v}$, and the components of $A(t,y_0)$ are given by

$$A_{i,r}(y)=\frac{\partial f_i }{\partial y_r}(y)+\sum_{j=1}^{m}\left(\frac{\partial B_{i,j}}{\partial y_r}(y)u_j(y)+ B_{i,j}(y)\frac{\partial u_j}{\partial y_r}(y)\right).$$
By classical ODE theory we know that
$$ det(D_{y_0}y(t;y_0,u))=\exp\left(\int_{0}^{t} tr(A(y(s;y_0,u))) ds \right).$$
Using \eqref{LemmaInt:eq1} in the previous expression we get
        \beq  det(D_{y_0}y(t;y_0,u))\geq exp\left( -t K\right)\mbox{ for all }t\in [0,T].\label{LemmaInt:eq2}\eeq
By the Fubini theorem and using the change of variable $z=y(t;y_0,u)$ we have
$$ \int_{\omega}\int_{0}^{T}\phi(y(t;y_0,u)) dt dy_0= \int_{0}^{T}\int_{y(t;\omega,u)}\frac{\phi(z)}{det(D_{y_0}y(t; y^{-1}(t;z,u),u))} dz dt, $$
where $z\mapsto y^{-1}(t;z,u)$ denotes the inverse function of $y_0\mapsto y(t;y_0,u)$. Now, using \eqref{LemmaInt:eq2} we get
$$ \int_{\omega}\int_{0}^{T}\phi(y(t;y_0,v)) dt dy_0\leq  \int_{0}^{T}\int_{y(t;\omega,u)}\phi(z)e^{Kt} dz dt=\frac{e^{KT}-1}{K} \int_{\Omega_1}\phi (z) dz,$$
which concludes the proof.
\end{proof}

\begin{proof}[Proof of \Cref{ConvTheo2}]
Since by assumption $\partial \Omega_1$ is $C^1$ regular,  there exists a family of functions $v_{\varepsilon}$ in $C^{1}(\overline{\Omega_1})$  such that
\beq
\lim_{\varepsilon\to 0}\norm{v-v_{\varepsilon}}_{W^{1,2p}(\Omega_1)}=0\mbox{ and }\ \lim_{\varepsilon\to 0}\norm{v-v_{\varepsilon}}_{C(\overline{\Omega_1})}=0,
\label{ConvTheo2:eq1}\eeq
see \cite[Theorem 3, Section 5.3.3]{Evans}.  Then, by \eqref{Hamiltonian:prop}, \eqref{ConvTheo2:eq1}, and boundedness of $\Omega_1$   we get
\beq
\begin{array}{l}
   \dis \lim_{\varepsilon\to 0^+} H(y,\nabla v_{\varepsilon}(y),u_{v_{\varepsilon}}(y))
     =\lim_{\varepsilon\to 0^+}\mathcal{H}(y,\nabla v_{\varepsilon}(y))
     =\mathcal H(y,\nabla v(y))\mbox{ in }L^{p}(\Omega_1).
\end{array}
\label{ConvTheo2:eq2}
\eeq
Moreover we have
\begin{equation}\label{eq:aux4}
H(y,\nabla v_{\varepsilon}(y),u_{v_{\varepsilon}}(y))+h_{\varepsilon}(y)+g(y)
=\mathcal{H}(y,\nabla v_{\varepsilon}(y))+h_{\varepsilon}(y)+g(y)\geq 0,
\end{equation}
where $h_{\varepsilon}\in C(\overline{\Omega_1})$ is given by $ h_{\varepsilon}(y)= -\min\{0,\mathcal{H}(y,\nabla v_{\varepsilon}(y))+g(y)\}$ .
Thanks to the assumption  that $v$ is a super-solution of \eqref{HJBeqApprox} and by \eqref{ConvTheo2:eq2}, we have
\beq\lim_{\varepsilon\to 0^{+}}h_{\varepsilon}(y)= 0\mbox{ in }L^{p}(\Omega_1).\label{ConvTheo2:eq3}\eeq
To simplify  notation, we now denote $y(t;y_0,u_{\bar v})$ by $y(t)$. Applying \Cref{lemma:main:estimate}, with $v=v_\epsilon$,  and using \eqref{eq:aux4} we find that
\beq \begin{array}{l}
\dis \mathcal{V}_{u_{\bar{v}},T}(y_0)+v_{\varepsilon}(y(T))-v_{\varepsilon}(y_0)\leq \int_{0}^{T} (h_{\varepsilon}(y(t))+g(y(t))+\beta|u_{v_{\varepsilon}}(y(t))-u_{\bar{v}}(y(t))|^{2})dt.
\end{array}
\label{ConvTheo2:eq5}
\eeq
To finish the proof we need to apply the ${L^{p}(\Omega_1)}$-norm on both sides of the inequality. By the  Minkowsky inequality and the  Jensen inequality we have

$$\begin{array}{l}
     \dis  \left(\int_{\omega}\left|\int_{0}^{T} (h_{\varepsilon}(y(t))+g(y(t))+\beta|u_{v_{\varepsilon}}(y(t))-u_{\bar{v}}(y(t))|^{2})dt\right|^{p}dy_0\right)^{\frac{1}{p}}\\
     \ecart\dis\leq \left(\int_{\omega}\left|\int_{0}^{T}g(y(t))dt\right|^{p}dy_0\right)^{\frac{1}{p}}+\left(\int_{\omega}\left|\int_{0}^{T}h_\varepsilon(y(t))dt\right|^{p}dy_0\right)^{\frac{1}{p}}+
     \left(\int_{\omega} \left|\int_{0}^{T}\beta|u_{v_{\varepsilon}}(y(t))-u_{\bar{v}}(y(t))|^{2}dt\right|^{p}dy_0\right)^{\frac{1}{p}}\\
      \ecart\dis\leq T|\omega|^{\frac{1}{p}}\norm{g}_{L^{\infty}(\Omega_1)}+T^{\frac{p-1}{p}}\left(\int_{\omega}\int_{0}^{T}|h_\varepsilon(y(t))|^{p}dt dy_0\right)^{\frac{1}{p}}+
     \beta T^{\frac{p-1}{p}}\left(\int_{\omega} \int_{0}^{T}|u_{v_{\varepsilon}}(y(t))-u_{\bar{v}}(y(t))|^{2p}dtdy_0\right)^{\frac{1}{p}}.
\end{array}   $$
Then, using \Cref{LemmaInt} we obtain

$$\begin{array}{l}
     \dis  \left(\int_{\omega}\left(\int_{0}^{T} (h_{\varepsilon}(y(t))+g(y(t))+\beta|u_{v_{\varepsilon}}(t)-u_{\bar{v}}(t)|^{2})dt\right)^{p}dy_0\right)^{\frac{1}{p}}\\
      \ecart\dis\leq T|\omega|^{\frac{1}{p}}\norm{g}_{L^{\infty}(\Omega_1)}+
     T^{\frac{p-1}{p}}\left(\frac{e^{KT}-1}{K}\right)^{\frac{1}{p}}\left(\norm{h_\varepsilon}_{L^{p}(\Omega_1)}+\beta\norm{u_{v_{\varepsilon}}-u_{\bar{v}}}^{2}_{L^{2p}(\Omega_1)}\right).
\end{array}   $$
By using this  in \eqref{ConvTheo2:eq5} we get
\beq \begin{array}{l}
\dis \norm{\left(\mathcal{V}_{u_{\bar{v}},T}+v_{\varepsilon}\circ y(T;\cdot,u_{\bar v})-v_{\varepsilon}\right)^{+}}_{L^{p}(\omega)}\\
\ecart\dis\leq T|\omega|^{\frac{1}{p}}\norm{g}_{L^{\infty}(\Omega_1)}+
     T^{\frac{p-1}{p}}\left(\frac{e^{KT}-1}{K}\right)^{\frac{1}{p}}\left(\norm{h_\varepsilon}_{L^{p}(\Omega_1)}+\beta\norm{u_{v_{\varepsilon}}-u_{\bar{v}}}^{2}_{L^{2p}(\Omega_1)}\right)
\end{array}
\label{ConvTheo2:eq8}
\eeq
Letting $\varepsilon\to 0^{+}$ and thanks to \eqref{ConvTheo2:eq8}, \eqref{ConvTheo2:eq1} and \eqref{ConvTheo2:eq3} we get the desired result.
\end{proof}
 \begin{rem}
   It is worth to mentioning that these error bounds could be applied to study the convergence of data-driven approaches. Consider $\Omega_1$, $T$, and $\omega$ as in \Cref{ConvTheo2}, and  $v_\theta\in C^2(\Omega)$ obtained by some data-driven approach as an approximation of the value function $V$ where $\theta$ are the parameters of this method. Setting $u_\theta=u_{v_\theta}$,  \Cref{ConvTheo2} leads to an $L^{p}(\omega)$ error bound between $\mathcal{V}_{u_\theta}$ and $V$ depending on the semi-concavity of $v_\theta$ as long as \Cref{hypo:scapetime} is met with $u=u_\theta$. Hence, in order to use this error bound for data-driven approaches it would be important to study the capability of these approaches of ensuring \Cref{hypo:scapetime} and  controlling the semi-concavity of $v_\theta$.
\end{rem}

\section{Escape time estimates}
\label{Sec:StabilityEst}
In \Cref{Sec:ErrorEs} we have assumed that the state is bounded until some time $T>0$, see \Cref{hypo:scapetime}. Following the notation of \Cref{Lyapunov}, here we give an estimate from below for the escape time of the trajectories of \eqref{closedloopproblem} for a feedback law given by an approximation of $\phi$. We consider three cases depending on the regularity of $\phi$. The first one corresponds to $\phi\in C^{1}(\Omega)$, the second one the semi-concavity of $\phi$ and the last one concerns the general case with $\phi$ continuous. In the context of proving the results of \Cref{MainResults}, we aim to apply the results of this section with $V$ playing the role of $\phi$.
\begin{lemma}
\label{LyapunovStabLemma}
Let $\omega\subset\Omega$, $\phi\in C(\Omega)$ and $\delta>0$ such that they satisfy \Cref{Lyapunov}.
\begin{enumerate}[(a)]
    \item \label{LyapunovStabLemma:a} If $\phi\in C^{1}(\Omega)$, consider  $u\in Lip(\Omega;\R^{m})$, and let $\hat{T}$ be the maximum $T>0$ such that $y([0,T];\omega,u)\subset \omega_\delta$. Then the following holds
    \beq  \hat{T}\left(\norm{B(u-u_{\phi})}_{C(\omega_{\delta};\R^{M})} \norm{\nabla w}_{C(\omega_{\delta};\R^{d})}+\max_{x\in\omega_{\delta}} g(x)\right)\geq \delta.\label{LyapunovStabLemma:eq0}\eeq
    \item \label{LyapunovStabLemma:b} If $\phi\in Lip(\Omega)$, set $u_{\varepsilon}= u_{\phi_\varepsilon}$ with $\phi_{\varepsilon}=\phi\ast \rho_{\varepsilon}$ a mollification of $\phi$, and let $T_{\varepsilon}$ be the maximum $T$ such that $y([0,T];\omega,u_{\varepsilon})\subset \omega_{\delta}$. Then there exists $\varepsilon_0$ such that  all $\varepsilon\in (0,\varepsilon_0)$ we have
\beq  T_{\varepsilon}\left(\sigma^{1}_{\varepsilon}+\frac{\sigma^{2}_{\varepsilon}}{\beta}\norm{B^{\top}\nabla \phi}_{L^{\infty}(\omega_{\delta};\R^{d})}\right)\geq \delta.\label{LyapunovStabLemma2:eq0}\eeq
where $\sigma_\varepsilon^1$ is defined in \eqref{LyapunovStabLemma:sigma1} and $\sigma_\varepsilon^2$ in \eqref{LyapunovStabLemma:sigma2}.
    \item \label{LyapunovStabLemma:c}Let $\phi\in C(\Omega)$ and set $u_{\varepsilon,\lambda}= u_{\phi_{\varepsilon,\lambda}}$, where $\phi_{\varepsilon,\lambda}=\mathcal{M}_{\lambda}\phi\ast \rho_{\varepsilon}$ is a mollification of $\mathcal{M}_{\lambda}\phi$, and let $T_{\varepsilon,\lambda}$ be the maximum $T$ such that $y(t;\omega,u_{\varepsilon,\lambda})\subset \omega_\delta$ for all $t\in [0,T]$. Then there exists $\lambda_0>0$ such that for all $\lambda\in (0,\lambda_0)$ there exits $\varepsilon(\lambda)$  satisfying that for all $\varepsilon\in (0,\varepsilon(\lambda))$
\beq  T_{\varepsilon,\lambda}\cdot\left(\sigma^1_{\varepsilon,\lambda}+
\frac{\sigma^2_{\varepsilon}}{\beta}\norm{B^{\top}\nabla \mathcal{M}_{\lambda}\phi}_{L^{\infty}{(\omega_{\delta})}}\right)\geq \delta,
\label{LyapunovStabLemma3:eq0}\eeq
where $\sigma^{2}_{\varepsilon}$ is given in \eqref{LyapunovStabLemma:sigma2} and  $\sigma^{1}_{\varepsilon,\lambda}$ is given by \eqref{LyapunovStabLemma:sigma1,l}.
\end{enumerate}
\end{lemma}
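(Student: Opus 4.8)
The three parts share a common Lyapunov-descent skeleton, so the plan is to treat (a) directly and reduce (b) and (c) to it. Fix $y_{0}\in\omega$, write $y(t)$ for the corresponding closed-loop trajectory and $u$ for the feedback in force (namely $u$, $u_{\varepsilon}$, or $u_{\varepsilon,\lambda}$). Since $w\in C^{1}$, one has $\frac{d}{dt}w(y(t))=\nabla w(y(t))^{\top}(f(y(t))+B(y(t))u(y(t)))$, and the whole argument consists in bounding this derivative by a constant $R$ uniform over $\omega_{\delta}$: integrating from $0$ to the escape time $T$ gives $w(y(T))-w(y_{0})\le RT$, and since $y(T)\in\partial\omega_{\delta}$ forces $w(y(T))=\sup_{\omega}w+\delta$ while $w(y_{0})\le\sup_{\omega}w$, we obtain $\delta\le RT$ (if the escape time is infinite the bound is trivial), which is precisely the claimed inequality, applied to the trajectory realising the escape time and using uniformity of $R$. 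Thus each part reduces to identifying the admissible $R$. For (a) the super-solution property of $\phi=V\in C^{1}$ holds classically, $\nabla w^{\top}(f+Bu_{\phi})\le g$ pointwise, so splitting $\nabla w^{\top}(f+Bu)=\nabla w^{\top}(f+Bu_{\phi})+\nabla w^{\top}B(u-u_{\phi})$ and estimating the second summand by $\norm{\nabla w}_{C(\omega_{\delta})}\norm{B(u-u_{\phi})}_{C(\omega_{\delta})}$ yields $R=\max_{\omega_{\delta}}g+\norm{\nabla w}_{C(\omega_{\delta})}\norm{B(u-u_{\phi})}_{C(\omega_{\delta})}$, i.e. \eqref{LyapunovStabLemma:eq0}.

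For (b), $\phi$ is merely Lipschitz, so I would use the a.e. super-solution property: by Rademacher's theorem $\phi$ is differentiable a.e., at each such point $\tilde x$ its gradient lies in the viscosity subdifferential, whence $\nabla w(\tilde x)^{\top}(f(\tilde x)+B(\tilde x)u_{\phi}(\tilde x))\le g(\tilde x)$. Writing $\nabla\phi_{\varepsilon}(x)=\int\nabla\phi(\tilde x)\rho_{\varepsilon}(x-\tilde x)\,d\tilde x$, the derivative $\frac{d}{dt}w(y(t))$ becomes an average over $\tilde x\in B(x,\varepsilon)$ of terms to which the a.e. inequality applies once the coefficients $f,B,\nabla w$ at the trajectory point $x$ are replaced by their values at $\tilde x$. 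The replacement errors are of exactly two kinds: the drift/running-cost mismatch, which together with the term $\nabla w(x)^{\top}f(x)$ is controlled by $\sigma^{1}_{\varepsilon}$ of \eqref{LyapunovStabLemma:sigma1}, and the control-coefficient mismatch $|B(x)^{\top}\nabla w(x)-B(\tilde x)^{\top}\nabla w(\tilde x)|$, which multiplies $\tfrac1\beta B^{\top}\nabla\phi$ and is controlled by $\frac{\sigma^{2}_{\varepsilon}}{\beta}\norm{B^{\top}\nabla\phi}_{L^{\infty}(\omega_{\delta})}$. This gives $R=\sigma^{1}_{\varepsilon}+\frac{\sigma^{2}_{\varepsilon}}{\beta}\norm{B^{\top}\nabla\phi}_{L^{\infty}(\omega_{\delta})}$ and hence \eqref{LyapunovStabLemma2:eq0}; the threshold $\varepsilon_{0}$ only serves to keep $B(x,\varepsilon)\subset\Omega$ so that $\phi_{\varepsilon}$ is defined on $\omega_{\delta}$.

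For (c), I would first transfer the super-solution property from $\phi$ to its Moreau envelope, and then invoke (b). Running the argument of Proposition \ref{MoreauEnvProp:HolderConv} with the Lyapunov operator $F(y,p)=-\nabla w(y)^{\top}f(y)+\frac1\beta\nabla w(y)^{\top}B(y)B(y)^{\top}p+g(y)$ in place of $\mathcal H$, and evaluating $F$ at a proximal minimiser $y\in\mathcal A_{\lambda}\phi(x)$, the mismatch between $x$ and $y$ — estimated through $|x-y|\le 2\lambda^{1/2}\norm{\phi}_{L^{\infty}(\Omega)}$ of \eqref{MoreauEnvProp:item00:0}, the modulus $h_{w}$ of $\nabla w$, and the Lipschitz constants of $f$ and $B$ — produces exactly the perturbation $g_{\lambda}$ of \eqref{LyapunovStabLemma:glambda}, so that $\mathcal M_{\lambda}\phi$ is an a.e. super-solution of $-\nabla w^{\top}(f+Bu_{\mathcal M_{\lambda}\phi})+g_{\lambda}=0$ on $\Omega^{\lambda}_{\phi}$. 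Since $\mathcal M_{\lambda}\phi$ is semi-concave, hence locally Lipschitz by Proposition \ref{Simconvexity:c11}, the argument of (b) applies verbatim with $\mathcal M_{\lambda}\phi$ and $g_{\lambda}$ in the roles of $\phi$ and $g$: the drift mismatch now incorporates $g_{\lambda}$ and becomes $\sigma^{1}_{\varepsilon,\lambda}$ of \eqref{LyapunovStabLemma:sigma1,l}, while the control mismatch yields $\frac{\sigma^{2}_{\varepsilon}}{\beta}\norm{B^{\top}\nabla\mathcal M_{\lambda}\phi}_{L^{\infty}(\omega_{\delta})}$, giving \eqref{LyapunovStabLemma3:eq0}. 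Here $\varepsilon(\lambda)$ is chosen so that $\omega_{\delta}+B(0,\varepsilon(\lambda))\subset\Omega^{\lambda}_{\phi}$, guaranteeing that the super-solution inequality for $\mathcal M_{\lambda}\phi$ is available at all mollification points.

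The hard part will be the bookkeeping of these coefficient mismatches: verifying that replacing $f$, $B$ and $\nabla w$ along the trajectory by their values at the mollification points (in (b)) or at the proximal minimisers (in (c)) produces precisely $\sigma^{1}_{\varepsilon}$, $\sigma^{2}_{\varepsilon}$, $\sigma^{1}_{\varepsilon,\lambda}$ and $g_{\lambda}$, with every remaining term either absorbed into these moduli or shown to vanish as $\varepsilon,\lambda\to 0^{+}$. In (c) this is compounded by the growth of $\nabla\mathcal M_{\lambda}\phi\sim\lambda^{-1}(x-y)$ as $\lambda\to 0^{+}$ and by the need to confirm that the super-solution property survives the Moreau regularisation of a Lyapunov operator that is affine, but with $y$-dependent coefficients, in $p$; this is exactly where the structure of $g_{\lambda}$ together with the bound $|x-y|\le 2\lambda^{1/2}\norm{\phi}_{L^{\infty}(\Omega)}$ must be exploited.
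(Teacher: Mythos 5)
Your proposal is correct and follows essentially the same route as the paper's proof: part (a) by the classical pointwise Lyapunov descent estimate with the splitting $\nabla w^{\top}B(u-u_{\phi})$, part (b) by transferring the a.e.\ super-solution inequality through the mollification and bookkeeping the coefficient mismatches into $\sigma^{1}_{\varepsilon}$ and $\sigma^{2}_{\varepsilon}$, and part (c) by applying Proposition \ref{MoreauEnvProp:HolderConv} to the Lyapunov operator $F(y,p)=-\nabla w(y)^{\top}\bigl(f(y)-\tfrac{1}{\beta}B(y)B(y)^{\top}p\bigr)$ to show that $\mathcal M_{\lambda}\phi$ satisfies Hypothesis \ref{Lyapunov} with $g=g_{\lambda}$ and then invoking part (b). The only cosmetic difference is that you conclude the escape-time bound directly from $w(y(T))=\sup_{\omega}w+\delta$ on $\partial\omega_{\delta}$, whereas the paper argues by contradiction with the maximality of the escape time; these are equivalent.
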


\begin{proof}
\begin{enumerate}
    \item \emph{Proof of (a)} Let $y_0\in \omega$ be arbitrary and denote $y(t)=y(t;y_0,u)$ for $t\in [0,\hat{T}]$. By \Cref{Lyapunov} we deduce
    $$ \begin{array}{l}
         \dis \nabla w(y(t))^{\top}(f(y(t))+B(y(t))u(y(t)))\\
         \ecart \dis \leq \nabla w(y(t))^{\top}(f(y(t))+B(y(t))u_\phi(y(t)))+\nabla w(y(t))^{\top}B(y(t))(u(y(t))-u_\phi(y(t)))\\
         \ecart\dis \leq g(y(t))+\nabla w(y(t))^{\top}B(y(t))(u(y(t))-u_\phi(y(t))).
    \end{array}$$
    Integrating from $0$ to $t$ with $t\in [0,\hat{T}]$ we obtain
    $$\begin{array}{l}
         \dis w(y(t))-w(y_0)\leq \int_{0}^{t} \left(g(y(s))+\nabla w(y(s))^{\top}B(y(s))(u(y(s))-u_\phi(y(s)))\right)ds \\
         \ecart\dis \leq t\left(\sup_{x\in \omega_{\delta}}g(x)+\norm{B(u-u_\phi)}_{L^{\infty}(\omega_{\delta};\R^{d})}\norm{\nabla w}_{L^{\infty}(\omega_{\delta};\R^{d})} \right).
    \end{array}  $$
    If $\hat{T}$ does not satisfy \eqref{LyapunovStabLemma:eq0}, then we have $\hat{T}<+\infty$ and by the previous inequality there exists $y_0\in\omega$ such that
    $$  w(y(\hat{T}))-w(y_0)<\delta.$$
    Thus, by continuity there exists $\tilde{T}>\hat{T}$ such that $w(y(\tilde{T}))-w(y_0)\leq\delta$ and  $y([0,\tilde{T}];y_0,u)\subset \omega_\delta$. Since this contradicts the definition of $\hat{T}$, we obtain that \eqref{LyapunovStabLemma:eq0} holds.
\item \emph{Proof of (\ref{LyapunovStabLemma:b})} Since  $\omega_{\delta}\Subset\Omega$, there exists $\varepsilon_0$ such that for all $\varepsilon<\varepsilon_0$ we have
$\omega_{\delta}+B(0,\varepsilon)\subset \Omega$. Consider $\rho_{\varepsilon}$ a family  of mollifiers. Since $\rho_{\varepsilon}$ are positive and \eqref{Lyapunov} we have
$$ \nabla w^{\top} (f+Bu_\phi)\ast \rho_{\varepsilon}(x)\leq g\ast \rho_{\varepsilon}(x) \mbox{ for all }x\in \Omega_{\varepsilon},$$
with $\Omega_{\varepsilon}$ defined as in \Cref{prop:hessbound}. Then for $x\in \omega_{\delta}$ we can write
\beq \begin{array}{l}
\dis \nabla w(x)^{\top}(f(x)+B(x)u_{\varepsilon}(x))\leq  \nabla w(x)^{\top}B(x)u_{\varepsilon}(x)-\left(\nabla w^{\top}Bu_\phi\right)\ast \rho_{\varepsilon}(x) \\
\ecart\dis+g\ast \rho_{\varepsilon}(x)+\nabla w^{\top}(x) f(x)-\nabla w^{\top}f\ast \rho_{\varepsilon}(x) .
\end{array}\label{LyapunovStabLemma2:eq1}\eeq
 We can bound the first two terms in the right-hand side of \eqref{LyapunovStabLemma2:eq1} as follows
\beq \begin{array}{l}
     \dis \nabla w(x)^{\top}B(x)u_{\varepsilon}(x)+\frac{1}{\beta}\int_{B(x,\varepsilon)}\nabla w(y)^{\top}B(y)B^{\top}(y)\nabla \phi(y)\rho_{\varepsilon}(x-y) dy\\
     \ecart\dis =-\frac{1}{\beta}\int_{B(x,\varepsilon)}\left(\nabla w(x)^{\top}B(x)B^{\top}(y)-\nabla w(y)^{\top}B(y)B^{\top}(y)\right)\nabla \phi(y)\rho_{\varepsilon}(x-y) dy\\
     \ecart\dis \leq \frac{1}{\beta}\sup_{y\in B(x,\varepsilon)}\left|B^{\top}(x)\nabla w(x)^{\top}-B(x)^{\top}(y)\nabla w(y)\right|\cdot\norm{B^{\top}\nabla \phi}_{L^{\infty}(\omega_{\delta};\R^{d})}=\frac{\sigma_{\varepsilon}^{2}}{\beta} \norm{B^{\top}\nabla \phi}_{L^{\infty}(\omega_{\delta};\R^{d})}.
\end{array}\label{LyapunovStabLemma2:eq2}\eeq
For the remaining terms of the right-hand side of \eqref{LyapunovStabLemma2:eq1} we have
\beq g\ast \rho_{\varepsilon}(x)+\nabla w^{\top}(x) f(x)-\nabla w^{\top}f\ast \rho_{\varepsilon}(x)\leq \sigma^{1}_\varepsilon .\label{LyapunovStabLemma2:eq3}\eeq
Then by using \eqref{LyapunovStabLemma2:eq2} and \eqref{LyapunovStabLemma2:eq3} in \eqref{LyapunovStabLemma2:eq1}  we to arrive to
$$   \nabla w(x)^{\top}(f(x)+B(x)u_{\varepsilon}(x)) \leq (\sigma^{1}_{\varepsilon}+\frac{\sigma_{\varepsilon}^{2}}{\beta} \norm{B^{\top}\nabla \phi}_{L^{\infty}(\omega_{\delta};\R^{d})}).$$
Integrating from 0 to $t\in (0,T_{\varepsilon})$ we obtain
$$ w(y(t;y_0,u_\varepsilon))\leq w(y_0)+t(\sigma^{1}_{\varepsilon}+\frac{\sigma_{\varepsilon}^{2}}{\beta} \norm{B^{\top}\nabla \phi}_{L^{\infty}(\omega_{\delta};\R^{d})})$$
for all $t\in [0,T_\varepsilon]$ and all $y_0\in \omega$. In particular, if $T_{\varepsilon}$ does not satisfy \eqref{LyapunovStabLemma2:eq0}, then $T_{\varepsilon}<\infty$ and there exists $y_0\in\omega$ such that
$$ w(y(T_{\varepsilon};y_0,u_\varepsilon))< w(y_0)+\delta.$$
From this, \eqref{LyapunovStabLemma2:eq0}  follows as in the end of the proof of (\ref{LyapunovStabLemma:a}).
\item \emph{Proof of (\ref{LyapunovStabLemma:c})} Since $\omega_\delta\subset\Omega$ there exists $\lambda_0$ such that for all $\lambda\in (0,\lambda_0)$  we have that $\omega_\delta\Subset\Omega_\phi^{\lambda}$ which in turn implies that there exists $\varepsilon(\lambda)$ satisfying that for all $\varepsilon\in (0,\varepsilon(\lambda))$ it holds that $\omega_\delta+B(0,\varepsilon)\Subset\Omega_\phi^{\lambda}$.
     Aiming for an application of Proposition \ref{MoreauEnvProp:HolderConv} we define for
     $x\in \overline{\Omega}$ and $p\in \R^{d}$ we define $$F(x,p)=-\nabla w(x)\left(f(x)-\frac{1}{\beta}B(x)B^{\top}(x)p\right).$$
We estimate  for $x_{1},x_{2}\in\Omega$ and $p\in\R^{d}$
    $$ \begin{array}{l}
       \dis |F(x_1,p)-F(x_2,p)|\leq \Big\{ h_{w}(|x_1-x_2|)\left(\norm{f}_{L^{\infty}(\Omega;\R^{d})}+ \frac{1}{\beta}|p|\norm{B}_{L^{\infty}(\Omega;\R^{d\times m})}^2\right)\\
\ecart\dis+\norm{f}_{Lip(\Omega;\R^{d})}\norm{\nabla w}_{L^{\infty}(\Omega;\R^{d})}|x_1-y_2|+\frac{2}{\beta}|x_{1}-x_2|\cdot|p|\norm{B}_{Lip(\Omega);\R^{d\times M}}^2\norm{\nabla w}_{L^{\infty}(\Omega;\R^{d})}\Big\}.
\end{array}$$
For $s\geq 0$ we define $h_1(s)=0$, $$h_2(s)= \frac{1}{\beta} h_w(s)\norm{B}_{L^{\infty}(\Omega;\R^{d\times n})}^2+\frac{2s}{\beta} \norm{B}_{Lip(\Omega;\R^{d\times m})}^2\norm{\nabla w}_{L^{\infty}(\Omega;\R^d)}$$ and $$h_3(s)=h_\omega(s)\norm{f}_{L^{\infty}(\Omega);\R^d}+s\norm{f}_{Lip(\Omega;\R^d)}\norm{\nabla w}_{L^{\infty}(\Omega;\R^{d})}.$$
We notice that $F$ and $h_1$, $h_2$ and $h_3$ satisfy \eqref{MoreauEnvProp:HolderConv:eq1}. Combining this with the fact that $\phi$ and $w$ satisfy \Cref{Lyapunov} and using \Cref{MoreauEnvProp:HolderConv} we obtain that $\mathcal{M}_\lambda\phi$ is a viscosity super-solution of
    $$ -\nabla w(y)^{\top}(f(y)-\frac{1}{\beta}B(y)B^{\top}(y)\nabla \mathcal{M}_{\lambda}\phi(y))+g_{\lambda}(y)=0\mbox{ in }\omega_{\delta}.$$
    Therefore $\mathcal{M_{\lambda}\phi}$ and $w$ satisfy \Cref{Lyapunov} with $g=g_{\lambda}$, which enables us to use (\ref{LyapunovStabLemma:b}) to conclude \eqref{LyapunovStabLemma3:eq0}.
\end{enumerate}
\end{proof}

\begin{rem}\label{rem:lyapunov:conv}
    In \Cref{LyapunovStabLemma}\eqref{LyapunovStabLemma:c}, we notice that in view of  \Cref{MoreauEnvProp}, $g_{\lambda}$ converges to $g$ in $C(\omega_{\delta})$ if $\nabla w$ is $\sigma-$Hölder continuous and $\phi$ is $\alpha$-Hölder continuous with $\sigma\in \left(1-\alpha,1\right]$. Indeed
    $$ \begin{array}{l}
     \dis  |g_{\lambda}(x)-g(x)|\leq
\sup_{y\in \mathcal{A}_{\lambda}\phi (x)}\Big\{ C\left(\norm{f}_{L^{\infty}(\Omega;\R^{d})}|x-y|^{\sigma}+\frac{1}{\beta}\frac{|x-y|^{\sigma+1}}{\lambda}\norm{ B}^2_{L^{\infty}(\Omega;\R^{d\times m})}\right)\\
\ecart\dis+\norm{f}_{Lip(\Omega;\R^{d})}\norm{\nabla w}_{L^{\infty}(\Omega;\R^{d})}|x-y|+\frac{2}{\beta}\frac{|x-y|^{2}}{\lambda}\norm{B}_{Lip(\Omega);\R^{d\times M}}^2\norm{\nabla w}_{L^{\infty}(\Omega;\R^{d})}\Big\}.
\end{array}$$
By \eqref{MoreauEnvProp:item00:1} we have $|x-y|\leq C\lambda^{\frac{1}{2-\alpha}}$, then using this in the previous inequality we have
\begin{equation}\label{rem:lyapunov:conv:eq1}
 \begin{array}{l}
     \dis  |g_{\lambda}(x)-g(x)|\leq
\Big\{ C\left(\norm{f}_{L^{\infty}(\Omega;\R^{d})}\lambda^{\frac{\sigma}{2-\alpha}}+ \frac{1}{\beta} \lambda^{\frac{\sigma+\alpha-1}{2-\alpha}}\norm{ B}^2_{L^{\infty}(\Omega;\R^{d\times m})}\right)\\
\ecart\dis+\norm{f}_{Lip(\Omega;\R^{d})}\norm{\nabla w}_{L^{\infty}(\Omega;\R^{d})}\lambda^{\frac{2}{2-\alpha}}+\frac{2}{\beta}\lambda^{\frac{\alpha}{2-\alpha}}\norm{B}_{Lip(\Omega);\R^{d\times M}}^2\norm{\nabla w}_{L^{\infty}(\Omega;\R^{d})}\Big\},
\end{array}
\end{equation}
Then, if $\sigma\in (1-\alpha,1]$ we obtain that $|g_{\lambda}(x)-g(x)|$ tends to 0 as $\lambda$ goes to $0^{+}$ uniformly in $\omega_{\delta}$. Moreover, if $\phi$ is Lipschitz continuous, then we do not need to impose further regularity on $w$. The reason for this to hold is that by \eqref{MoreauEnvProp:item00:1} we have $\frac{|x-y|}{\lambda}\leq 2\norm{\nabla \phi}_{L^{\infty(\omega_{\delta};\R^{d})}},$ which implies
$$ \begin{array}{l}
     \dis  |g_{\lambda}(x)-g(x)|\leq
\sup_{y\in \mathcal{A}_{\lambda}\phi (x)}h_{w}(|x-y|)\Big\{ C\left(\norm{f}_{L^{\infty}(\Omega;\R^{d})}+\frac{1}{\beta}\norm{\nabla \phi}_{L^{\infty}(\Omega;\R^{d})}\norm{ B}^2_{L^{\infty}(\Omega;\R^{d\times m})}\right)\\
\ecart\dis+\norm{f}_{Lip(\Omega;\R^{d})}\norm{\nabla w}_{L^{\infty}(\Omega;\R^{d})}|x-y|+\frac{2}{\beta}\frac{|x-y|^{2}}{\lambda}\norm{B}_{Lip(\Omega);\R^{d\times M}}^2\norm{\nabla w}_{L^{\infty}(\Omega;\R^{d})}\Big\}.
\end{array}$$
Then by \eqref{MoreauEnvProp:item00:0} and \eqref{MoreauEnvProp:item0:1}  we get that $|g_{\lambda}(x)-g(x)|$ tends to 0 as $\lambda$ tends to $0^{+}$ uniformly in $\omega_{\delta}$.
These estimates will be used in the proofs of Theorems \ref{SemiConvexConvergenceTheo} and \ref{HolderConvergenceTheo} below.

\end{rem}

\section{Proof of the main results}
\label{Sec:Approx}
\label{Convergence result}

\begin{proof}[Proof of \Cref{C1ConvergenceTheo}]
Let $\tilde{T}_{\varepsilon}$ be the largest time such that $y_{\varepsilon}([0,\tilde{T}_{\varepsilon}];\omega,u_\varepsilon)\subset\omega_\delta.$ By \Cref{LyapunovStabLemma} (a) with $g=0$ we know that
    $$\tilde{T}_{\varepsilon}\geq \frac{\delta}{\norm{B(u_\varepsilon-u_{V})}_{L^{\infty}(\omega_\delta;\R^{d})}\norm{\nabla w}_{L^{\infty}(\omega_{\delta};\R^{d})}}. $$
Consequently, by \eqref{C1ConvergenceTheo:eq1} and \eqref{C1ConvergenceTheo:eq2} there exists $\varepsilon_0>0$ such that for all $\varepsilon\leq \varepsilon_0$ we have
$$\tau_{\varepsilon}\leq \frac{\delta}{\norm{B(u_\varepsilon-u_{V})}_{L^{\infty}(\omega_\delta;\R^{d})}\norm{\nabla w}_{L^{\infty}(\omega_{\delta};\R^{d})}}  $$
and thus $\tau_{\varepsilon}\leq \tilde{T}_\varepsilon$ and \Cref{hypo:scapetime} holds. Hence, we can apply \Cref{ConvTheo1} and by using \eqref{C1ConvergenceTheo:eq2} we obtain \eqref{C1ConvergenceTheo:eq3}.
\end{proof}

\begin{proof}[Proof of \Cref{SemiConvexConvergenceTheo}]
Since $\omega_{\delta}\Subset\Omega$ and $V$ is semi-convex, then by \Cref{MoreauEnvProp} there exists $\lambda_0>0$ such that $\omega_{\delta}\subset\Omega^{\lambda}_V$ and $\mathcal{M}_{\lambda}V$ is $C^{1,1}$ in $\Omega^{\lambda}_V$. Using a diagonalization argument and the properties of mollification we have get \eqref{SemiConvexConvergenceTheo:eq1}. Since $V$ is semi-convex in $\Omega$,  it is Lipschitz continuous in $\omega_{\delta}$. By \Cref{rem:lyapunov:conv} we have that $ \lim_{\lambda\to 0^+} g_\lambda=0$, where $g_\lambda$ was defined in \eqref{LyapunovStabLemma:glambda}. Together with the definitions of $
\sigma^{1}_{\varepsilon(\lambda),\lambda}$ and $\sigma^2_{\varepsilon(\lambda)}$ we obtain
\beq  \lim_{\lambda\to 0^{+}}\left(\sigma^{1}_{\varepsilon(\lambda),\lambda}+\frac{\sigma^{2}_{\varepsilon(\lambda)}}{\beta}\norm{\nabla\mathcal{M}_{\lambda}V}_{L^{\infty}(\omega_{\delta})}\right)=0.\label{SemiConvexConvergenceTheo:eq5}\eeq
Combining this with \eqref{SemiConvexConvergenceTheo:eq2}, we get $\lim_{\lambda\to 0^{+}}\tau_{\lambda}=\infty$.  Moreover, by \Cref{LyapunovStabLemma}  \eqref{LyapunovStabLemma:c}  we obtain $y([0,\tau_{\lambda}];\omega,u_\lambda)\subset\omega_{\delta}$ for $\lambda\in (0,\lambda_0)$, where $u_\lambda=u_{V_{\varepsilon(\lambda),\lambda}}$.

For the second part of \eqref{SemiConvexConvergenceTheo:eq4} we observe that $\mathcal{M}_{\lambda}V$ satisfies \eqref{rem:MoreauEnv:HJB:approxeq}. Hence, the hypotheses of \Cref{ConvTheo1} are satisfied by $V_{\varepsilon(\lambda),\lambda}$ and $\mathcal{M}_{\lambda}V$, which implies that
\beq \begin{array}{l}
     \dis\norm{\left(\mathcal{V}_{u_{\lambda},\tau_{\lambda}}(y_0)+\mathcal{M}_{\lambda}V\circ y(\tau_{\lambda};\cdot,u_{{\lambda}}) -\mathcal{M}_{\lambda}V\right)^{+}}_{L^{\infty}(\omega)} \leq \tau_{\lambda}\left(\norm{h_{\lambda}}_{L^{\infty}(\omega_{\delta})}+
     \beta\norm{u_\lambda-u_{\mathcal{M}_{\lambda}V}}^{2}_{L^{\infty}(\omega_{\delta};\R^{m})}\right),
\end{array}\label{SemiConvexConvergenceTheo:eq6}\eeq
where $h_{\lambda}$ is defined in \eqref{rem:MoreauEnv:HJB:g}. As was pointed out in \Cref{rem:MoreauEnv:HJB}, since $V$ is Lipschitz continuous in $\omega_{\delta}$, then $\norm{h_{\lambda}}_{L^{\infty}(\omega_{\delta})}\leq K\lambda $  for some constant $K$ independent of $\lambda$. Applying this estimate and \eqref{SemiConvexConvergenceTheo:eq1} in \eqref{SemiConvexConvergenceTheo:eq6}, we obtain
\beq \begin{array}{l}
     \dis\norm{\left(\mathcal{V}_{u_{\lambda},\tau_{\lambda}}(y_0)+\mathcal{M}_{\lambda}V\circ y(\tau_{\lambda};\cdot,u_{{\lambda}}) -\mathcal{M}_{\lambda}V\right)^{+}}_{L^{\infty}(\omega)}\leq \tau_{\lambda}\left(K+\frac{\norm{B}^2_{L^{\infty}(\omega_{\delta};\R^{d\times m})}}{\beta}\right)\lambda.
\end{array}\label{SemiConvexConvergenceTheo:eq7}\eeq
This together with the fact that $\mathcal{M}_{\lambda}V$ converges to $V$ in $C_{loc}(\Omega)$ and \eqref{SemiConvexConvergenceTheo:eq2} imply \eqref{SemiConvexConvergenceTheo:eq4}.
\end{proof}

\begin{proof}[Proof of \Cref{SemiConcaveConvergenceTheo}]
Let $\varepsilon_{0}>0$ be such that $\omega_{\delta}+B(0,\varepsilon)\subset\Omega$ for all $\varepsilon\in ( 0,\varepsilon_0)$. Then by the properties of mollification  we find \beq\lim_{\varepsilon\to 0^{+}}\norm{u_{\varepsilon}-u_V}_{L^{p}(\omega_{\delta};\R^{M})}=0,\label{SemiConcaveConvergenceTheo:eq1}\eeq
 where $u_\varepsilon = u_{V_\varepsilon}$,  and by Proposition \ref{prop:hessbound} we have
 \beq \begin{array}{l}
     \dis  -tr(B(y)Du_{\varepsilon}(y)) \leq \frac{1}{\beta}tr(B(y)B(y)^{\top}\nabla V_{\varepsilon}^{2})+\frac{md^2}{\beta}\norm{B}_{Lip(\omega_{\delta})}^{2}\norm{\nabla V}_{L^{\infty}(\omega_{\delta})}\\
     \ecart\dis \leq C\frac{1}{\beta}\left(md\norm{B}_{L^{\infty}(\omega_{\delta};\R^{d\times m})}^{2}+md^2\norm{B}_{Lip(\omega_{\delta};\R^{d\times m})}^{2}\norm{\nabla V}_{L^{\infty}(\omega_{\delta};\R^{d})}\right),
\end{array}\label{SemiConcaveConvergenceTheo:eq2}\eeq for all $y\in\omega_{\delta}$ and $\varepsilon\in (0,\varepsilon_0)$.
Therefore \eqref{ConvTheo2:eq0} is satisfied with $\Omega_1=\omega_\delta$ and $\bar v= V_\varepsilon$.

By \Cref{LyapunovStabLemma} (\ref{LyapunovStabLemma:b}) and \eqref{SemiConcaveConvergenceTheo:eq4} we have that $y([0,\tau_{\varepsilon}];\omega,u_{\varepsilon})\subset\omega_\delta$. Further, by \eqref{SemiConcaveConvergenceTheo:eq3}, \eqref{SemiConcaveConvergenceTheo:eq4} and the definition of $\sigma^{1}_{\varepsilon}$ and $\sigma^{2}_{\varepsilon}$ we obtain
$\lim_{\varepsilon\to 0^{+}}\tau_{\varepsilon}=\infty.$ Moreover, all the hypotheses of \Cref{ConvTheo2} are met with $  \Omega_1 = \omega_\delta$, $\bar v = u_\varepsilon= u_{V_\varepsilon}$, and $v=V$,  and thus we have
\beq \norm{\left(\mathcal{V}_{u_{\varepsilon},\tau_{\varepsilon}}(y_0)+V\circ y(\tau_{\varepsilon};\cdot,u_{\varepsilon}) -V\right)^{+}}_{L^{p}(\omega)}\leq
\tau_{\varepsilon}^{\frac{p-1}{p}}\beta\left(\frac{e^{K\tau_{\varepsilon}}-1}{K}\right)^{\frac{1}{p}} \norm{u_\varepsilon-u_{V}}^{2}_{L^{2p}(\omega_{\delta};\R^{m})}.\label{SemiConcaveConvergenceTheo:eq6}\eeq
By virtue of \eqref{SemiConcaveConvergenceTheo:eq3} and \eqref{SemiConcaveConvergenceTheo:eq4}, then \eqref{SemiConcaveConvergenceTheo:eq5} follows from \eqref{SemiConcaveConvergenceTheo:eq6}.
\end{proof}

\begin{proof}[Proof of \Cref{HolderConvergenceTheo}]
Since $\omega_\delta\Subset\Omega$ there exists $\lambda_0\in (0,1)$ such that for all $\lambda\in (0,\lambda_0)$ we have $\omega_\delta\subset\Omega_{V}^{\lambda}$ which in turns implies that there exists $\varepsilon(\lambda)$ satisfying that for all $\varepsilon\in (0,\varepsilon_0(\lambda))$ it holds that $\omega_\delta+B(0,\varepsilon)\Subset\Omega_\phi^{\lambda}$. Let us denote by $V_{\varepsilon}$ as a mollification of $V$, where  $\varepsilon>0$. We choose $\varepsilon(\lambda)$ such that  $\varepsilon(\lambda)\leq \min(\lambda^{\frac{1}{2-\alpha}}, \varepsilon_0(\lambda))$. Since $\mathcal{M}_{\lambda}V$ is semi-concave with constant $\frac{1}{\lambda}$ in $\Omega^{\lambda}_{V}$  we obtain by \Cref{MoreauEnvProp} (\ref{MoreauEnvProp:item1}), \eqref{MoreauEnvProp:item00:1}
\label{HolderConvergenceTheo:proof:semiconcaveV}, and \Cref{prop:hessbound} applied to the mollification of $\mathcal{M}_{\lambda}$
\begin{equation}
\norm{\nabla V_{\varepsilon(\lambda),\lambda}}_{L^{\infty}(\omega_{\delta})}\leq C\lambda ^{\frac{\alpha-1}{2-\alpha}} \,\text{ and }\, y^{\top} \nabla^2  V_{\varepsilon(\lambda),\lambda}(x) y \le \frac{1}{\lambda} |y|^2, \text{ for all } y \in\R^d, \,x\in \omega_\delta,
\end{equation}
where $C>0$ is a constant depending only on $V$ and $\Omega$. Applying these bounds to
 $$ -tr(B(y)Du_{\lambda}(y))=\frac{1}{\beta}\sum_{i=1}^{d}\sum_{j=1}^{m} B_{i,j}(y)\left(\sum_{k=1}^{d}\frac{\partial B_{k,j}}{\partial x_i}(y)\frac{\partial V_{\varepsilon(\lambda),\lambda}}{\partial x_k}(y)+B_{k,j}(y)\frac{\partial^2 V_{\varepsilon(\lambda),\lambda}}{\partial x_k\partial x_i}(y)\right),$$
we obtain that for $\lambda\in (0,\lambda_0)$
 \beq -tr(B(y)Du_{\lambda}(y))\leq \frac{md(d+1)}{\lambda\beta }C\norm{B}_{Lip(\omega_{\delta};\R^{d\times m})}^{2}\mbox{ for all } y\in \omega_{\delta}\label{HolderConvergenceTheo:proof:hessineq}\eeq
for a constant $C>0$ which only depends on $\Omega$ and $V$. Here $u_\lambda =u_{V_{\varepsilon(\lambda),\lambda}}$ and we  used that $\lambda^{\frac{\alpha-1}{2-\alpha}}<\frac{1}{\lambda}$ for $\lambda\in (0,1)$.

 We now prove the first part of \eqref{HolderConvergenceTheo:eq5}. For this, we need to verify that
\beq  \lim_{\lambda\to 0^{+}}\left(\sigma^{1}_{\varepsilon(\lambda),\lambda}+\frac{\sigma^{2}_{\varepsilon(\lambda)}}{\beta}\norm{\nabla V_{\varepsilon(\lambda),\lambda}}_{L^{\infty}(\omega_{\delta})}\right)=0.\label{HolderConvergenceTheo:proof:sigma1sigma2conv}\eeq
 Since $B$ is Lipschitz continuous and $\nabla w$ are Hölder continuous, and $\varepsilon(\lambda)\leq \lambda^{\frac{1}{2-\alpha}}$ we have the following estimate for $\sigma^2_{\varepsilon(\lambda)}$  $$ \sigma^2_{\varepsilon(\lambda)}\leq \norm{B}_{Lip(\omega_\delta;\R^{d\times m})}\norm{\nabla w}_{L^{\infty}(\omega_\delta;\R^d)}\lambda^{\frac{1}{2-\alpha}}+C\norm{B}_{L^{\infty}(\omega_\delta;\R^{d\times m})} \lambda^{\frac{\sigma}{2-\alpha}},$$
which implies that
\beq\sigma^2_{\varepsilon(\lambda)}\norm{\nabla V_{\varepsilon(\lambda),\lambda}}_{L^{\infty}(\omega_{\delta})}\leq C\left(\lambda ^{\frac{\sigma+\alpha-1}{2-\alpha}}+\lambda^{\frac{\alpha}{2-\alpha}}\right),\label{HolderConvergenceTheo:proof:sigma2conv}\eeq
for a constant $C>0$ which only depends on $B$, $f$, $\omega_\delta$, $w$ and $V$. Let us recall the definition of $g_\lambda$ from \eqref{LyapunovStabLemma:glambda}. For $\lambda\in (0,\lambda_0)$ we know that $\omega_\delta\subset\Omega_V^\lambda$ which combined with \eqref{rem:lyapunov:conv:eq1}
permits us to find  a constant $C>0$ depending $f$, $B$, $w$ and $\Omega$ such that
\beq g^1_{\lambda}(y)\leq C\left(\lambda^{\frac{\sigma}{2-\alpha}}+\lambda^{\frac{\sigma+\alpha-1}{2-\alpha}}+\lambda^{\frac{1}{2-\alpha}}+\lambda^{\frac{\alpha}{2-\alpha}}\right).\eeq
Using the above inequality, the Hölder continuity of $w$, the Lipschitz continuity of $B$ in $\Omega$ and the fact that $\varepsilon(\lambda)\leq \lambda^{\frac{1}{2-\alpha}}$ we obtain
\beq
 \sigma^1_{\varepsilon(\lambda)\lambda}\leq C\left(\lambda^{\frac{\sigma}{2-\alpha}}+\lambda^{\frac{\sigma+\alpha-1}{2-\alpha}}+
 \lambda^{\frac{1}{2-\alpha}}+\lambda^{\frac{\alpha}{2-\alpha}}\right),
 \label{HolderConvergenceTheo:proof:sigma1conv}
\eeq
    where $C>0$ is a constant which only depends on $f$, $B$, $w$ and $\Omega$. Combining \eqref{HolderConvergenceTheo:proof:sigma2conv} and \eqref{HolderConvergenceTheo:proof:sigma1conv} we obtain that \eqref{HolderConvergenceTheo:proof:sigma1sigma2conv} holds.
    This together with \eqref{HolderConvergenceTheo:eq2} implies that $\lim_{\lambda\to 0^{+}}\tau_{\lambda}=\infty$, see \eqref{HolderConvergenceTheo:eq4}.

 Moreover, by \eqref{HolderConvergenceTheo:eq4} and \Cref{LyapunovStabLemma}\eqref{LyapunovStabLemma:c}, we have $y([0,\tau_{\lambda}];\omega,u_{\lambda})\subset\omega_{\delta}$.

For proving the second claim in \eqref{HolderConvergenceTheo:eq5} we notice that $\mathcal{M}_{\lambda}V$ satisfies \eqref{rem:MoreauEnv:HJB:approxeq} with $v=\mathcal{M}_{\lambda}V$ in $\Omega_V^\lambda$.
Hence, the hypotheses of \Cref{ConvTheo2} are satisfied with $v=\mathcal{M}_\lambda V$ and $\overline{v}=V_{\varepsilon(\lambda),\lambda}$, which implies that
\beq \begin{array}{l}
     \dis\norm{\left(\mathcal{V}_{u_{\lambda},\tau_{\lambda}}(y_0)+\mathcal{M}_{\lambda}V\circ y(\tau_{\lambda};\cdot,u_{{\lambda}}) -\mathcal{M}_{\lambda}V\right)^{+}}_{L^{p}(\omega)} \\
     \ecart\dis \leq \tau_{\lambda}|\omega|^{\frac{1}{p}}\norm{h_{\lambda}}_{L^{\infty}(\omega_{\delta})}
     +\tau_{\lambda}^{\frac{p-1}{p}}\beta\left(\frac{e^{K(\lambda)\tau_{\lambda}}-1}{K(\lambda)}\right)^{\frac{1}{p}} \norm{u_\lambda-u_{\mathcal{M}_{\lambda}V}}^{2}_{L^{2p}(\omega_{\delta};\R^{m})},
\end{array}\label{HolderConvergenceTheo:eq7}\eeq
where $h_{\lambda}$ is defined  in  \eqref{rem:MoreauEnv:HJB:g}. As was pointed out in \Cref{rem:MoreauEnv:HJB}, for $\alpha\in \left(\frac{1}{2},1\right]$ and $\lambda$ small enough we have that $h_{\lambda}\leq C\lambda^{\frac{2\alpha-1}{2-\alpha}}$ for a constant $C>0$ depending only on $\Omega$, $\ell$, $f$, $B$ and $V$. Using this and \eqref{HolderConvergenceTheo:eq2} in  \eqref{HolderConvergenceTheo:eq7} we get
$$ \lim_{\lambda\to 0^+}\norm{\left(\mathcal{V}_{u_{\lambda},\tau_{\lambda}}(y_0)+\mathcal{M}_{\lambda}V\circ y(\tau_{\lambda};\cdot,u_{{\lambda}}) -\mathcal{M}_{\lambda}V\right)^{+}}_{L^{p}(\omega)} =0.$$
This estimate and the fact that $\mathcal{M}_{\lambda}V$ converges uniformly to $V$ on compact subsets by \Cref{MoreauEnvProp} \eqref{MoreauEnvProp:item0} imply that \eqref{HolderConvergenceTheo:eq5} holds.
\end{proof}

     \begin{proof}[Proof of \Cref{ConvTheoTraj}]
         Let us prove \eqref{ConvTheoTraj:1} and choose $T>0$. Due to the fact that $\lim_{\varepsilon\to 0^+}T_\varepsilon=\infty$ there exists $\varepsilon_0(T)$ such that $T<T_{\varepsilon}$ for all $\varepsilon\in (0,\varepsilon_0(T))$. Then for  $\varepsilon\in (0,\varepsilon_0(T))$ we have that $y([0,T];y_0,u_\varepsilon)\subset\Omega$ which by the boundedness of $\Omega$ implies that $\{\norm{y_{\varepsilon}}_{L^{\infty}((0,T);\R^{d})}\}_{\varepsilon\in(0,\varepsilon_0)}$ is bounded. Further, ${\{u_{\varepsilon}\circ y_{\varepsilon}\}}_{\varepsilon\in(0,\varepsilon_0)}$ is bounded in $L^{2}((0,T);\R^{m})$ since $\norm{u_{\varepsilon}\circ y_{\varepsilon}}^{2}_{L^{2}((0,T);\R^{m})}\leq \frac{1}{\beta}\mathcal{V}_{u_\varepsilon,T_\varepsilon}(y_0)$ and $\{\mathcal{V}_{T_{\varepsilon},u_{\varepsilon}}(y_0)\}_{\varepsilon\in(0,\varepsilon_0)}$ is bounded. since
         This, together with  the fact that $\{y_{\varepsilon}\}_{\varepsilon\in(0,\varepsilon_0)}$ is bounded in $L^{\infty}((0,T);\R^{d})$, and that $f$ is a locally Lipschitz function, implies that $\{y_\varepsilon\}_{\varepsilon\in(0,\varepsilon_0)}$ is bounded in $W^{1,2}((0,T);\R^{d})$.

         Therefore, there exist $y^{*}\in W^{1,2}((0,T);\R^{d})$ and $u^{*}\in L^{2}((0,T);\R^{m})$, and a sub-sequence of $(u_{\varepsilon}\circ y_{\varepsilon},y_{\varepsilon})$ which converges to $(u^{*},y^{*})$ weakly in $L^{2}((0,T);\R^{m})\times W^{1,2}((0,T);\R^{d})$. Given that the previous holds for each $T$, by a diagonal argument we have that $u^{*}\in L_{loc}^{2}((0,\infty);\R^{m})$ and  $y^{*}\in W_{loc}^{1,2}((0,\infty);\R^{d})$, and a sub-sequence of  $(u_{\varepsilon}\circ y_{\varepsilon},y_{\varepsilon})$ converges to $(u^{*},y^{*})$ weakly in $L_{loc}^{2}((0,\infty);\R^{m})\times W_{loc}^{1,2}((0,T);\R^{d}).$ This proves the existence of at least one weakly convergent sub-sequence.

     We now prove \eqref{ConvTheoTraj:2}, i.e., the optimality for every accumulation point of $(u_{\varepsilon}\circ y_{\varepsilon},y_{\varepsilon})$. Consider an arbitrary pair $(u,y)\in L_{loc}^{2}((0,\infty);\R^{m})\times W_{loc}^{1,2}((0,T);\R^{d})$ and a sub-sequence of $(u_{\varepsilon}\circ y_{\varepsilon},y_{\varepsilon})$, still denoted by $(u_{\varepsilon}\circ y_{\varepsilon},y_{\varepsilon})$, such that $(u_{\varepsilon}\circ y_{\varepsilon},y_{\varepsilon})$ converges to $(u,y)$ weakly in $L_{loc}^{2}((0,\infty);\R^{m})\times W_{loc}^{1,2}((0,\infty);\R^{d}).$ Then, by the compact injection of $W^{1,2}_{loc}((0,\infty);\R^{d})$ in $C_{loc}([0,\infty);\R^{d})$ passing to a sub-sequence we have that $y_{\varepsilon}$ converges to $y$ in $C_{loc}([0,\infty);\R^{d})$. In particular, this implies that $y$ is a solution of \eqref{OpenLoop}. Moreover, due to the continuity of $\ell$, the $L^{2}((0,T);\R^{m})$ norm lower-semi continuity for all $T>0$, and the non-negativity of $V$  we have the inequality in the following statements, the equality follows from assumption \eqref{eq:aux5}:
         $$ \int_{0}^{T}\left(\ell(y)+\frac{\beta}{2}|u|^{2}\right) dt\leq \lim_{\varepsilon \to 0^{+}}\mathcal{V}_{u_\varepsilon, T_\varepsilon}(y_{0})+V(y_{\varepsilon}(T_{\varepsilon}))= V(y_0).$$
        Since this holds for every $T>0$, we obtain that $u\in L^{2}((0,\infty);\R^{m})$ and $J(u)\leq V(y_0)$. Hence, $u^*$ is an optimal solution of \eqref{ControlProblem}.
         Since this is true for any accumulation point of $(u_\varepsilon,y_\varepsilon)$, then \eqref{ConvTheoTraj:cons:eq:1} holds.

         To prove \eqref{ConvTheoTraj:3} we note that if the the solution of \eqref{ControlProblem} is unique, then by \eqref{ConvTheoTraj:2} every sub-sequence of $u_{\varepsilon}\circ y_{\varepsilon}$ has a convergent sub-sequence which converges to the optimal solution, since this solution is unique, the whole sequence $u_{\varepsilon}\circ y_{\varepsilon}$ converges to the optimal solution.\end{proof}

        \begin{proof}[Proof of \Cref{Corconv}]  By \eqref{ConvTheoTraj:eq:1} there exists a sub-sequence ${\mathcal{V}}_{u_\varepsilon, T_\varepsilon}+V\circ y({T_\varepsilon};\cdot,u_{\varepsilon})$ which converges almost everywhere in $\omega$, \ and thus  \eqref{eq:aux5} holds almost everywhere in $\omega$, and (a)-(c) of Theorem \ref{ConvTheoTraj} hold.  Furthermore, if $p=\infty$, using the continuity of $V$ and $\mathcal{V}_{u_\varepsilon, T_\varepsilon}$  the whole sequence converges everywhere in $\omega$ and therefore  \eqref{ConvTheoTraj:1}, \eqref{ConvTheoTraj:2} and \eqref{ConvTheoTraj:3} hold for every $y_0\in\omega$.
     \end{proof}

\section{Example}
\label{Example}
In this section we provide an example of a control problem of the form \eqref{ControlProblem} with a locally Lipschitz continuous value function which satisfies   \Cref{Lyapunov} for a smooth function $w$ with $g=0$. Additionally, we identify a set of initial conditions where the optimal control of \eqref{ControlProblem} is not unique, which will imply that the value function is not differentiable in that set.

Let us start by defining the running cost for the state variable by
\begin{equation*}
\ell_\alpha(y) = \frac{1}{2}|y|^2\left(1+\alpha\psi\left(\frac{|y-z|}{\sigma}\right)\right),
\end{equation*}
where
 $$\psi(s)=\left\{\begin{array}{ll}
    \exp\left(-\frac{1}{1-s^2}\right) & \mbox{ if }|s|<1 \\
     0& \mbox{ if }|s|\geq 1
\end{array}\right. ,$$
and  $z\in \R^{2}$ satisfies $z_{1}< -\sigma$, $z_2=0$,  for some $\sigma >0$.
For $\alpha\in [0,\infty)$, we consider the  following control problem
\beq \min_{\begin{array}{c}
     u\in L^{2}((0,\infty);\R^{2}),  \\
      y'=u,\ y(0)=y_0
\end{array}} \int_{0}^{\infty} \ell_\alpha(y(t))dt+\frac{\beta}{2}\int_{0}^{\infty}|u(t)|^2 dt.
\label{Example:controlproblem}\eeq
 The value function of this problem is denoted  by $V_\alpha$. There are two cases of interest. They are $\alpha=0$ and $\alpha$ tending to infinity. In the first case the value function is a quadratic function given by $V_{0}(y)=\frac{\sqrt{\beta}}{2}|y|^2$ and it therefore is of class $C^{\infty}(\R^{2})$.
 In the second case we are going to prove that as $\alpha$ tends to infinity the value function tends to the value function $V_\infty$ of the following state constrained problem
\beq \min_{\begin{array}{c}
     u\in L^{2}((0,\infty);\R^{2}),  \\
      y'=u,\ y(0)=y_0, \\
      y(t)\in \R^{2}\setminus B(z,\sigma) \mbox{ for all }t\geq 0.
\end{array}} \int_{0}^{\infty} \frac{1}{2}|y(t)|^2dt+\frac{\beta}{2}\int_{0}^{\infty}|u(t)|^2 dt.
\label{Example:controlproblem:infinity}\eeq
on compact subsets of $\R^{2}\setminus B(z,\sigma)$. Here we set $B(z,\sigma)=\{u \in \R^2: |u-z|< \sigma \}$.  Accordingly we are going to prove  that $V_{\alpha}$ is not smooth for $\alpha$ large enough. This will be due to the lack of uniqueness of solutions for $y_0$ in $\{(y_1,y_2)\in\R^{2}:\ y_2=0,\ y_1<z_1-\sigma\}$ in problem \eqref{Example:controlproblem:infinity}.

\begin{prop}
\label{Example:propOptimalityCond}
    Let $\alpha\in (0,\infty)$. For each $y_0\in\R^{2}$ the control problem \eqref{Example:controlproblem} has an optimal solution and the associated value function is bounded as follows
    \beq   \frac{\sqrt{\beta}}{2}|y_0|^2\leq V_{\alpha}(y_0)\leq \frac{\sqrt{\beta}}{2}\left(1+\frac{\alpha}{2}\right)|y_0|^2.\label{Example:VB}\eeq
    Further, for every $y_0\in \R^{2}$ and every  optimal control $u^{*}\in L^{2}((0,\infty);\R^2)$ of \eqref{Example:controlproblem}  with corresponding trajectory $y^{*}$  it holds that $u^{*},y^{*}\in C^{\infty}([0,\infty);\R^{2})$, and
    \beq \frac{du^*}{dt}(t)=\frac{d^2y^*}{dt^2}(t)=\frac{1}{\beta}\nabla \ell_{\alpha}(y^*(t))\mbox{ for }t>0,\label{Example:OptimalityCond}\eeq
    \beq \ell_{\alpha}(y^{*}(t))=\frac{\beta}{2}|u^{*}(t)|^2\mbox{ for all }t>0\label{Example:trajHJB}\eeq
    and in particular
    \beq |u^*(0)|\leq \left(\frac{1+\alpha}{\beta}\right)^{\frac{1}{2}}|y_0|.\label{Example:LipscthizCond:u0bound}\eeq
\end{prop}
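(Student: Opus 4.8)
The plan is to treat \eqref{Example:controlproblem} as an autonomous problem in the calculus of variations: since the dynamics is simply $y'=u$, minimising $J_\alpha$ amounts to minimising $\int_0^\infty\bigl(\ell_\alpha(y)+\tfrac\beta2|y'|^2\bigr)\,dt$ over absolutely continuous curves with $y(0)=y_0$. For \emph{existence} I would use the direct method. Taking a minimising sequence $(u_n,y_n)$ with $J_\alpha(y_0,u_n)\le V_\alpha(y_0)+1=:M$, the bound $\ell_\alpha\ge\tfrac12|y|^2\ge0$ gives $\|u_n\|_{L^2(0,\infty)}^2\le 2M/\beta$ and $\|y_n\|_{L^2(0,\infty)}^2\le 2M$, so $y_n$ is bounded in $H^1(0,\infty;\R^2)$. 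Passing to a subsequence, $u_n\weak u^*$ in $L^2$ and $y_n\weak y^*$ in $H^1$, while on each $[0,T]$ the compact embedding $H^1(0,T)\hookrightarrow C([0,T])$ yields uniform convergence $y_n\to y^*$; hence $y^*(0)=y_0$ and $(y^*)'=u^*$. Weak lower semicontinuity of the $L^2$ norm for the control cost, continuity of $\ell_\alpha$ together with uniform convergence for the state cost on $[0,T]$, and then letting $T\to\infty$, give $J_\alpha(y_0,u^*)\le\liminf J_\alpha(y_0,u_n)=V_\alpha(y_0)$, so $u^*$ is optimal.

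The \emph{bounds} \eqref{Example:VB} are comparison estimates. The lower bound is immediate from $\ell_\alpha\ge\ell_0$ (as $\psi\ge0$), whence $J_\alpha(y_0,u)\ge J_0(y_0,u)$ for every $u$ and thus $V_\alpha(y_0)\ge V_0(y_0)=\tfrac{\sqrt\beta}{2}|y_0|^2$. For the upper bound I would insert the $\alpha=0$ optimal feedback $u_0=-\tfrac1{\sqrt\beta}\,y$, whose trajectory is $y(t)=y_0e^{-t/\sqrt\beta}$, into $J_\alpha$. The control cost and the $\tfrac12|y|^2$ part of the state cost each equal $\tfrac{\sqrt\beta}{4}|y_0|^2$, while the extra term is bounded by $\psi\le1$ and contributes at most $\alpha\cdot\tfrac{\sqrt\beta}{4}|y_0|^2$; summing gives exactly $\tfrac{\sqrt\beta}{2}(1+\tfrac\alpha2)|y_0|^2$.

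For the \emph{optimality system} and \emph{regularity}, I would compute the first variation against perturbations $y^*+sh$ with $h\in C_c^\infty((0,\infty);\R^2)$, which are admissible because \eqref{Example:controlproblem} carries no state constraint. Since $\ell_\alpha\in C^\infty(\R^2)$, this yields the weak Euler--Lagrange identity $\tfrac{d}{dt}(\beta (y^*)')=\nabla\ell_\alpha(y^*)$. As $y^*\in H^1\subset C$, the right-hand side is continuous, so $(y^*)'\in C^1$; bootstrapping through the smoothness of $\ell_\alpha$ gives $y^*\in C^\infty([0,\infty);\R^2)$ and hence $u^*=(y^*)'\in C^\infty$, establishing \eqref{Example:OptimalityCond}.

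Finally, \eqref{Example:trajHJB} is an energy identity for the autonomous Lagrangian: the quantity $E(t)=\tfrac\beta2|u^*(t)|^2-\ell_\alpha(y^*(t))$ satisfies $E'=\beta u^*\cdot(u^*)'-\nabla\ell_\alpha(y^*)\cdot u^*=0$ along \eqref{Example:OptimalityCond}, so $E$ is constant. To identify the constant as $0$ I would invoke decay at infinity: finiteness of the cost gives $y^*,u^*\in L^2(0,\infty)$, and since $\psi\equiv0$ near the origin one has $(u^*)'=\tfrac1\beta\nabla\ell_\alpha(y^*)\in L^2$ as well, so $y^*,u^*\in H^1(0,\infty)$ and therefore $y^*(t),u^*(t)\to0$ as $t\to\infty$; letting $t\to\infty$ in $E(t)=E$ forces $E=0$, which is \eqref{Example:trajHJB}. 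Evaluating this identity at $t=0$ by continuity and bounding $\psi\le1$ yields $\tfrac\beta2|u^*(0)|^2=\ell_\alpha(y_0)\le\tfrac12(1+\alpha)|y_0|^2$, i.e.\ \eqref{Example:LipscthizCond:u0bound}. I expect the main obstacle to be pinning the conserved energy to zero: this is where one must combine finiteness of the infinite-horizon cost with the dynamics to secure the vanishing of $y^*$ and $u^*$ at infinity, the remaining steps being standard.
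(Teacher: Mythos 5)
Your proposal is correct and follows essentially the same route as the paper: comparison with the explicit $\alpha=0$ feedback for the two-sided bound \eqref{Example:VB}, the Euler--Lagrange/adjoint system $\beta (y^*)''=\nabla\ell_\alpha(y^*)$ followed by bootstrapping for \eqref{Example:OptimalityCond} and the $C^\infty$ regularity, and conservation of the energy $\frac{\beta}{2}|u^*|^2-\ell_\alpha(y^*)$ pinned to zero via the $H^1(0,\infty)$ decay of $y^*$ and $u^*$ for \eqref{Example:trajHJB} and \eqref{Example:LipscthizCond:u0bound}. The only substantive difference is that you carry out the direct-method existence argument in detail where the paper merely asserts existence from controllability, which is a welcome addition rather than a deviation.
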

\begin{proof}
Since the control system appearing in \eqref{Example:controlproblem} is controllable  the existence of an optimal control-state pair $(u^*,y^*$ can easily be established. By the Lagrange multiplier's theorem there exists $p^*\in L^2((0,\infty);\R^2)$ satisfying
\begin{equation}\label{eq:optsyst}
 p^*(t)'=\nabla \ell_\alpha (y^*(t))\mbox{ and }u^*=\frac{1}{\beta}p^*(t).
\end{equation}
Utilizing this first order optimality system it is straightforward to see that the optimal control of \eqref{Example:controlproblem} for $\alpha=0$ and $y_0\in\R^{2}$ is given by $u^{0}=-\frac{1}{\sqrt{\beta}}y_0e^{-\frac{t}{\sqrt{\beta}}}$. The trajectory associated to this control is given by $y^0=y_0e^{-\frac{t}{\sqrt{\beta}}}$. Turning to the case $\alpha>0$ we observe that $\psi \leq (1+\alpha)$, and thus  $u^{0}$ is feasible for \eqref{Example:controlproblem} with $\alpha>0$. We obtain that $V_{\alpha}$ is bounded from above in the following manner
    \beq V_\alpha(y_0)\leq  \frac{\sqrt{\beta}}{2}\left(1+\frac{\alpha}{2} \right)|y_0|^2\mbox{ for all }y_0\in \R^{d}.\label{Example:VUB}\eeq
Furthermore, since every feasible control for \eqref{Example:controlproblem} with $\alpha>0$ is feasible for $\alpha=0$ and since $\psi$ is bounded from below by zero, we arrive at
\beq \frac{\sqrt{\beta}}{2}|y_0|^2=V_{0}(y_0)\leq V_\alpha(y_0) \mbox{ for all }y_0\in\R^{d}.\label{Example:VLB}\eeq
    This proves \eqref{Example:VB}. In order to continue the analysis of this problem, we need the first order optimality condition of \eqref{Example:controlproblem}.

From \eqref{eq:optsyst} we now deduce that

\begin{equation}\label{eq:aux1} \frac{d^{2}y^{*}}{dt}(t)=\frac{du^*}{dt}(t)=\frac{1}{\beta}\nabla \ell_\alpha (y^{*}(t)),
\end{equation}
 which in turns proves \eqref{Example:OptimalityCond}. We further obtain that $y\in H^2((0,\infty);\R^2),\, u^*\in H^1((0,\infty);\R^2),\, p^*\in H^1((0,\infty);\R^2)$, from which $\lim_{t\to \infty} y^*(t)=\lim_{t\to \infty} u^*(t)=\lim_{t\to \infty} p^*(t)=0$ follows.
By iteration and the regularity of $\psi$ we  obtain that $u^{*}$ and $y^{*}$ are elements of $C^{\infty}([0,\infty);\R^{2})$. Multiplying the right hand side of \eqref{eq:aux1} by $u^{*}(t)$, using that $u^{*}(t)=\frac{d}{dt}y^{*}(t)$, and integrating from $t$ to infinity we obtain \eqref{Example:trajHJB}. Using \eqref{Example:trajHJB}, the continuity of $u^*$ and $y^*$ at 0, and the definition of $\ell_{\alpha}$ we get \eqref{Example:LipscthizCond:u0bound}.
\end{proof}

\begin{lemma}
\label{Example:lemma:Stability}
If for $y_0\in \R^{2}\setminus \overline{B(z,\sigma)}$ and $u_{0}\in \R^{2}$  the solution $y\in C^{\infty}([0,\infty),\R^{2})$ of
$$ y''(t)=\frac{1}{\beta}\nabla \ell_{\alpha}(y)\mbox{ for all }t>0 \mbox{ and }y(0)=y_0,\ y'(0)=u_0$$
satisfies $\lim_{t\to \infty}y(t)=0$, then $u_0\cdot y_0<0$.
\end{lemma}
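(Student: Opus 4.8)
The plan is to combine conservation of energy with the convexity of the squared distance to the origin. Define the mechanical energy $E(t)=\frac{\beta}{2}|y'(t)|^2-\ell_\alpha(y(t))$. Since $y''=\frac1\beta\nabla\ell_\alpha(y)$, one has $E'(t)=y'\cdot(\beta y''-\nabla\ell_\alpha(y))=0$, so $E$ is constant along the orbit. The origin lies at distance $|z|-\sigma>0$ from $\overline{B(z,\sigma)}$, so there is $T_*$ with $|y(t)|<|z|-\sigma$ for all $t\ge T_*$; on that set $\psi\equiv0$, hence $\nabla\ell_\alpha(y)=y$ and the orbit solves the linear system $y''=\frac1\beta y$. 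The only solutions of this system tending to $0$ are $y(t)=y(T_*)e^{-(t-T_*)/\sqrt\beta}$, which forces $y'(t)=-\frac{1}{\sqrt\beta}y(t)$ for $t\ge T_*$; in particular $y'(t)\to0$ and $E=\frac{\beta}{2}\frac1\beta|y|^2-\frac12|y|^2=0$. By conservation $E\equiv0$, i.e. $\frac{\beta}{2}|y'(t)|^2=\ell_\alpha(y(t))$ for all $t\ge0$. (We may assume $y_0\ne0$, since $y_0=0$ together with $E=0$ forces $u_0=0$ and hence $y\equiv0$, a degenerate case.)

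Next I would set $r(t)=\frac12|y(t)|^2$, so that $r'(0)=y_0\cdot u_0$ is exactly the quantity whose sign we seek, and $r'=y\cdot y'$, $r''=|y'|^2+\frac1\beta\,y\cdot\nabla\ell_\alpha(y)$. Wherever $y\notin\overline{B(z,\sigma)}$ we have $\nabla\ell_\alpha(y)=y$, whence $r''=|y'|^2+\frac1\beta|y|^2>0$ for $y\ne0$: $r$ is strictly convex off the bump support. Moreover the first step gives $r'(t)=-\frac1{\sqrt\beta}|y(t)|^2<0$ on $[T_*,\infty)$, so $r$ is strictly decreasing on the final approach to the origin.

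The core step is to upgrade this to $r'(0)<0$. Suppose for contradiction that $r'(0)=y_0\cdot u_0\ge0$. Since $r'(T_*)<0$, the number $t_0=\sup\{t\ge0:r'(t)\ge0\}$ satisfies $0\le t_0<T_*$, with $r'(t_0)=0$, $r'(t)<0$ for $t>t_0$, and therefore $r''(t_0)\le0$. If $y(t_0)\notin\overline{B(z,\sigma)}$, strict convexity gives $r''(t_0)=|y'(t_0)|^2+\frac1\beta|y(t_0)|^2>0$, a contradiction; hence any such $t_0$ must satisfy $y(t_0)\in\overline{B(z,\sigma)}$. In particular, if the orbit never meets $\overline{B(z,\sigma)}$, then $r$ is convex on all of $[0,\infty)$, so $r'(0)\ge0$ would make $r$ nondecreasing and bounded below by $r(0)>0$, contradicting $r(t)\to0$; this already yields $u_0\cdot y_0<0$.

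The main obstacle is the case in which the orbit enters $\overline{B(z,\sigma)}$. There the extra term in $r''$ contains the factor $\frac{\alpha}{2\sigma}\psi'(s)\frac{y\cdot(y-z)}{|y-z|}$ (with $s=|y-z|/\sigma$ and $\psi'\le0$), which is negative exactly where $y\cdot(y-z)>0$; for large $\alpha$ this term can dominate and render $r$ locally concave, so convexity alone is insufficient. To close the gap I would establish a barrier/invariant-region statement: a zero-energy orbit converging to the origin cannot develop an interior local maximum of $r$, equivalently it cannot linger in the concavity region of the ball. The tools I expect to use are the purely radial structure outside the ball ($\nabla\ell_\alpha=y$, giving convex $r$ and the exact stable direction $y'=-\frac1{\sqrt\beta}y$ near $0$) together with the off-axis placement $z_1<-\sigma$, $z_2=0$, which confines the concavity region to the side of $\overline{B(z,\sigma)}$ facing away from the origin; propagating the inward monotonicity of $r$ established on $[T_*,\infty)$ backward through any excursion should then force $r'(0)<0$. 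This backward propagation across the bump is the delicate point, and it is where I expect the bulk of the technical work to lie.
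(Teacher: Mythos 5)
Your energy argument is sound: $E=\frac{\beta}{2}|y'|^2-\ell_\alpha(y)$ is conserved, the forced decay $y(t)\to 0$ traps the orbit in the region $\{\psi\equiv 0\}$ for large $t$ where the dynamics are the linear system $y''=\frac{1}{\beta}y$, and the decaying branch gives $y'=-\frac{1}{\sqrt\beta}y$ there, whence $E=0$ and $r'=y\cdot y'<0$ on the final approach. Your treatment of the case in which the orbit never meets $\overline{B(z,\sigma)}$ is also correct, and it is in substance the paper's entire proof: the paper works on the maximal interval $[0,\hat T)$ on which the orbit stays outside $\overline{B(z,\sigma)}$, writes the explicit $\cosh/\sinh$ solution of $y''=\frac{1}{\beta}y$ there, computes $\frac{d}{dt}\frac12|y|^2$, observes that $u_0\cdot y_0\ge 0$ forces $|y(t)|\ge|y_0|$ on that interval, and concludes a contradiction with $y(t)\to 0$ after arguing that $\hat T=\infty$. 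Your differential formulation via the convexity of $r=\frac12|y|^2$ off the bump ($r''=|y'|^2+\frac{1}{\beta}|y|^2>0$) is an equivalent and slightly cleaner way of getting the same monotonicity.

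The gap is the one you name yourself: you do not close the case in which the orbit enters $\overline{B(z,\sigma)}$. Your localization of the difficulty is accurate — at the last time $t_0$ with $r'(t_0)=0$ one must have $y(t_0)\in\overline{B(z,\sigma)}$, and inside the ball the term $\frac{\alpha}{2\sigma\beta}|y|^2\psi'(s)\frac{y\cdot(y-z)}{|y-z|}$ can make $r$ concave for large $\alpha$, so convexity alone cannot propagate $r'<0$ backward through an excursion. But a "barrier/invariant-region statement" is announced, not proved, so what you have is a correct proof only under the additional hypothesis that the orbit avoids $\overline{B(z,\sigma)}$. For comparison, the paper does not follow the orbit into the ball at all: it asserts that if $\hat T<\infty$ then by continuity the orbit remains outside $\overline{B(z,\sigma)}$ slightly beyond $\hat T$, contradicting maximality, so that $\hat T=\infty$ and the exterior monotonicity alone yields the contradiction. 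To complete your write-up you must either justify a claim of this type (i.e., rule out that the orbit reaches $\partial B(z,\sigma)$ in finite time under $u_0\cdot y_0\ge 0$ — note that $|y(t)|\ge|y_0|$, or even $|y(t)|\ge|y_0|\cosh(t/\sqrt\beta)$, does not by itself preclude hitting the ball when $|y_0|\le|z|+\sigma$) or actually carry out the barrier argument across the bump. As it stands the proposal is an honest partial proof, not a proof.
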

\begin{proof}
For $y_0,u_0$ as in the statement of the lemma, we shall prove that  $u_0\cdot y_0<0$. Using the continuity of $y$ and that  $y_0\notin \overline{B(z,\sigma)}$ there exists $T>0$ such that $y(t)\notin\overline{B(z,\sigma)}$ for $t\in [0,T]$. Let $\hat{T}$ be the largest $T$ such that $y(t)\notin\overline{B(z,\sigma)}$ for $t\in [0,\hat{T}]$. Then  $y$ satisfies
$$ y''(t)=\frac{1}{\beta}y(t) \text{ for }t\in (0,\hat{T}).$$
Therefore we obtain the following expression for $y(t)$ with $t\in (0,\hat{T})$
\beq  y(t)=y_0\cosh\left(\frac{t}{\sqrt{\beta}}\right)+u_0\sqrt{\beta}\sinh\left(\frac{t}{\sqrt{\beta}}\right).
\label{Example:Stability:linsol}\eeq
Differentiating this expression we have
\beq  y'(t)=\frac{1}{\sqrt{\beta}}y_0\sinh\left(\frac{t}{\sqrt{\beta}}\right)+u_0\cosh\left(\frac{t}{\sqrt{\beta}}\right).\label{Example:Stability:linsol_der}\eeq
Multiplying $y(t)$ and $y'(t)$ for $t\in (0,\hat{T})$ we arrive at
$$ \frac{d}{dt} \frac{1}{2} |y(t)|^2= y(t)\cdot y'(t)=\sinh\left(\frac{t}{\beta}\right)\cosh\left(\frac{t}{\beta}\right)\left(\frac{1}{\sqrt{\beta}}|y_0|^2+\sqrt{\beta}|u_0|^2\right)+y_0\cdot u_0\cosh\left(2\frac{t}{\beta}\right).$$
If $y_0\cdot u_0\geq 0$ then the previous inequality
implies that $y'(t)\cdot y(t)\geq 0$ which in turns allow us to deduce that $|y(t)|^2\geq |y_0|^{2}$ for all $t\in [0,\hat{T}]$. If $\hat{T}<\infty$ then by the continuity of $y$ and since $y_0\notin \overline{B(z,\sigma)}$ we know that there exists $\delta>0$ such that $y(t)\notin \overline{B(z,\sigma)}$ for all $t\in (\hat{T},\hat{T}+\delta)$ which contradicts the definition of $\hat{T}$. Therefore $\hat{T}=\infty$ which contradicts the fact that $\lim_{t\to\infty}y(t)=0$. This permits us to conclude that $u_0\cdot y_0<0$.
\end{proof}

In the following we present the necessary optimality conditions arising from the dynamic programming principle which are important for proving  the non-differentiability of the value function. We first prove that in the ball $B(0,|z|-\sigma)$ the value function coincides with $V_0$ for every $\alpha>0$. This together with the stability of the optimal trajectories will allow us to get a necessary optimality condition which involves $D^+V_{\alpha}(y_0)$. This optimality condition also implies that $D^+V_{\alpha}(y_0)$ is bounded for every $y_0$. Combining this with some technical results we shall prove that $V_{\alpha}$ is Lipschitz continuous.
\begin{lemma}
    \label{Example:DifZero}
    Let $y_0\in B(0,|z|-\sigma)$ and $\alpha>0$. Then $V_{\alpha}(y_0)=V_0(y_0)=\frac{\sqrt\beta}{2}|y_0|^2$.
\end{lemma}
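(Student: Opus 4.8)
The plan is to establish the two inequalities $V_0(y_0)\le V_\alpha(y_0)$ and $V_\alpha(y_0)\le V_0(y_0)$ separately. The lower bound is immediate and has already been recorded as \eqref{Example:VLB}: since $\psi\ge 0$, we have $\ell_\alpha\ge \ell_0$ pointwise, so the objective of \eqref{Example:controlproblem} dominates that of the $\alpha=0$ problem for every admissible control, whence $V_0(y_0)\le V_\alpha(y_0)$. The substance of the lemma is therefore the matching upper bound.

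For the upper bound, the key idea is that the optimal control of the $\alpha=0$ problem is admissible for the $\alpha>0$ problem and produces a trajectory that never feels the perturbation $\alpha\psi$. Recall from the proof of \Cref{Example:propOptimalityCond} that the optimal control for $\alpha=0$ with initial condition $y_0$ is $u^{0}(t)=-\frac{1}{\sqrt{\beta}}y_0 e^{-t/\sqrt{\beta}}$, with associated straight-line trajectory $y^{0}(t)=y_0 e^{-t/\sqrt{\beta}}$. First I would observe that $|y^{0}(t)|=|y_0|e^{-t/\sqrt{\beta}}\le |y_0| < |z|-\sigma$ for all $t\ge 0$, using that $y_0\in B(0,|z|-\sigma)$ and that $t\mapsto |y^{0}(t)|$ is nonincreasing.

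Next I would translate this radial bound into a statement about the support of the bump. Since $|y^{0}(t)|<|z|-\sigma$, the reverse triangle inequality gives $|y^{0}(t)-z|\ge |z|-|y^{0}(t)| > \sigma$, so $y^{0}(t)\notin \overline{B(z,\sigma)}$ and hence $\psi\!\left(|y^{0}(t)-z|/\sigma\right)=0$ for every $t\ge 0$. Consequently $\ell_\alpha(y^{0}(t))=\tfrac12 |y^{0}(t)|^2=\ell_0(y^{0}(t))$ along the whole trajectory, so the value of the objective of \eqref{Example:controlproblem} evaluated at $u^{0}$ coincides with its value in the $\alpha=0$ problem, namely $V_0(y_0)$. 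Minimality then yields $V_\alpha(y_0)\le V_0(y_0)$, and combining with the lower bound gives $V_\alpha(y_0)=V_0(y_0)=\frac{\sqrt{\beta}}{2}|y_0|^2$.

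There is no serious obstacle here; the only point requiring care is the geometric fact that the radial trajectory emanating from $y_0\in B(0,|z|-\sigma)$ remains in that ball and that this ball is disjoint from $\overline{B(z,\sigma)}$, which is exactly the region where the perturbation $\alpha\psi$ is active. This disjointness is guaranteed by the hypothesis $z_1<-\sigma$, $z_2=0$, which places the center $z$ at distance $|z|>\sigma$ from the origin so that $B(0,|z|-\sigma)$ and $B(z,\sigma)$ are at most tangent.
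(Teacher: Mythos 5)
Your proof is correct, but it follows a genuinely different route from the paper's. The paper works with an optimal control $u^*$ of the $\alpha$-problem itself: combining \eqref{Example:OptimalityCond} with Lemma~\ref{Example:lemma:Stability} it shows that $t\mapsto|y^*(t)|$ is strictly decreasing, so the optimal trajectory never leaves $B(0,|z|-\sigma)$ and hence never meets $\overline{B(z,\sigma)}$, which gives $\int_0^\infty \ell_\alpha(y^*(t))\,dt=\int_0^\infty\tfrac12|y^*(t)|^2\,dt$ and lets the authors conclude that $u^*$ is simultaneously optimal for $\alpha=0$. You instead take the explicit $\alpha=0$ optimizer $u^0(t)=-\tfrac{1}{\sqrt\beta}\,y_0e^{-t/\sqrt\beta}$ as a competitor in \eqref{Example:controlproblem}, check that its radial trajectory stays in $B(0,|y_0|)\subset B(0,|z|-\sigma)$ and therefore avoids the support of $\psi$, and deduce $V_\alpha(y_0)\le V_0(y_0)$; together with \eqref{Example:VLB} this yields equality. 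Your version is more elementary and self-contained: it needs neither Lemma~\ref{Example:lemma:Stability} nor the (implicit) continuity bootstrap required to apply that lemma at every point along the optimal trajectory, and it supplies exactly the competitor on which the paper's final assertion --- that $u^*$ is optimal for the $\alpha=0$ problem --- actually rests. What the paper's route buys in exchange is the additional structural information that every $\alpha$-optimal trajectory emanating from $B(0,|z|-\sigma)$ remains in that ball and coincides with a $0$-optimal one, rather than merely the equality of the two value functions.
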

\begin{proof}
    By \eqref{Example:OptimalityCond} and \Cref{Example:lemma:Stability} we have that $\frac{d}{dt} \frac{1}{2}|y^*(t)|^2= y^{*}(t)\cdot u^{*}(t)< 0$ for all $t\geq 0$. In particular we have that $y^{*}(t)\in B(0,|z|-\sigma)$ for all $t>0$. This implies that
    $$ \int_{0}^{\infty}\ell_{\alpha}(y^{*}(t))dt=\int_{0}^{\infty}\frac{1}{2}|y^{*}(t)|^2 dt $$
    which permits us to conclude that $u^{*}$ is an optimal solution of \eqref{Example:controlproblem} with $\alpha=0$ and therefore $V_{\alpha}(y_0)=V(y_0)$.
\end{proof}

\begin{prop}
\label{Example:prop:Lipschitz}
For every $\alpha>0$ the value function of problem \eqref{Example:controlproblem} is Lipschitz continuous on compact subsets of $\R^2$.
\end{prop}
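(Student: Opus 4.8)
The plan is to bound $V_\alpha(y_1)-V_\alpha(y_0)$ for $y_0,y_1$ in a fixed compact set $K\subset\R^2$ by building, from an optimal control for $y_0$, an admissible control for $y_1$ whose cost exceeds $V_\alpha(y_0)$ by at most a constant times $|y_0-y_1|$. Since the dynamics are the drift-free $y'=u$, steering is essentially free, which makes this comparison clean. Let $(u^*,y^*)$ be an optimal pair for $y_0$, which exists by \Cref{Example:propOptimalityCond}, and set $r=|y_0-y_1|$ and $\hat K=\operatorname{co}(K)$. For a parameter $\tau>0$ I would use the control that moves $y_1$ to $y_0$ at constant velocity on $[0,\tau]$ and then copies the optimal control: $u_1(t)=(y_0-y_1)/\tau$ on $[0,\tau]$ and $u_1(t)=u^*(t-\tau)$ afterwards. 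Its trajectory reaches $y_0$ at time $\tau$ and coincides with $y^*(\cdot-\tau)$ thereafter, so the cost incurred on $[\tau,\infty)$ equals exactly $V_\alpha(y_0)$.

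The remaining step is to estimate the cost accrued on $[0,\tau]$. The connecting segment lies in $\hat K$, where $\ell_\alpha$ is bounded by $M:=\max_{\hat K}\ell_\alpha<\infty$ (this is the point at which the quadratic growth of $\ell_\alpha$ is rendered harmless, since we integrate only over a short time in a fixed compact set), so the running-cost contribution is at most $M\tau$, while the control cost is $\frac{\beta}{2}\tau\,(r/\tau)^2=\beta r^2/(2\tau)$. Hence $V_\alpha(y_1)\le V_\alpha(y_0)+M\tau+\beta r^2/(2\tau)$. Minimizing the right-hand side over $\tau$ (optimal $\tau=r\sqrt{\beta/(2M)}$) gives $V_\alpha(y_1)-V_\alpha(y_0)\le \sqrt{2\beta M}\,r$. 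Exchanging the roles of $y_0$ and $y_1$ produces the two-sided bound $|V_\alpha(y_0)-V_\alpha(y_1)|\le \sqrt{2\beta M}\,|y_0-y_1|$, that is, Lipschitz continuity on $K$ with constant $\sqrt{2\beta\,\max_{\operatorname{co}(K)}\ell_\alpha}$.

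This can also be phrased in the superdifferential language anticipated in the discussion preceding the proposition: letting $r\to0$ in the comparison above yields a necessary optimality condition relating $\beta u^*(0)$ to $D^+V_\alpha(y_0)$, and the bound $|u^*(0)|\le((1+\alpha)/\beta)^{1/2}|y_0|$ from \Cref{Example:propOptimalityCond} then controls these superdifferential elements locally. One may close the argument by invoking the standard fact that a continuous function whose superdifferential is nonempty and uniformly bounded by $L$ on a convex set is Lipschitz with constant $L$, where on $K$ one takes $L\sim((1+\alpha)\beta)^{1/2}\max_K|y_0|$.

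The only genuine subtlety I foresee lies in the superdifferential route: the bound on $|u^*(0)|$ shows that $D^+V_\alpha(y_0)$ \emph{contains} a point of controlled norm, whereas Lipschitz continuity requires \emph{all} of $D^+V_\alpha(y_0)$ to be bounded, which in turn needs the full characterization of the superdifferential in terms of the initial costates of optimal trajectories. The direct comparison argument of the first two paragraphs sidesteps this difficulty entirely, and I would therefore make it the backbone of the proof; its only mild technical point is the localization to $\operatorname{co}(K)$, which keeps the unbounded growth of $\ell_\alpha$ from ever entering the estimates.
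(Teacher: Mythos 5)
Your proof is correct and is essentially the paper's own argument: the paper likewise connects $y_1$ to $y_0$ by a constant-velocity straight line over a time $T$, bounds the transfer cost via the dynamic programming principle (equivalent to your concatenation with an optimal control), and takes $T$ proportional to $|y_0-y_1|$. The only cosmetic differences are that the paper bounds $\ell_\alpha$ on the segment by the explicit quadratic $\frac{1+\alpha}{2}|y|^2$ rather than by $\max_{\operatorname{co}(K)}\ell_\alpha$, and it simply sets $T=|y_0-y_1|$ instead of optimizing over $\tau$; your closing remarks on the superdifferential route correctly identify why the paper defers that characterization to \Cref{Example:NecessaryDynamiProgCond}.
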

\begin{proof}
    Let $y_0\in \R^2$, $y_1\in \R^2$, and $T>0$ arbitrary. Consider $u(t)=\frac{1}{T}(y_0-y_1)$ and $y(t)=\frac{t}{T}\cdot y_0+(1-\frac{t}{T})\cdot y_1$ for $t\in [0,T]$. We notice that $y'(t)=u$, $y(0)=y_1$ and $y(T)=y_0$. Then by the dynamic programming principle we have
    $$ V_\alpha(y_1)-V_\alpha(y_0)\leq \int_{0}^{T}\left(\ell_{\alpha}(y(t))+\frac{\beta}{2}|u(t)|^2\right)dt\leq \int_{0}^{T}\left(\frac{1+\alpha}{2}\left|\frac{t}{T}(y_0-y_1)+y_1\right|^2+\frac{\beta}{2T^2}|y_0-y_1|^2\right)dt.$$
    By direct calculations on the right-hand side of the previous expression we get
    $$ V_\alpha(y_1)-V_\alpha(y_0)\leq \frac{1+\alpha}{2}\left(2T|y_1|^2+T|y_0-y_1|^2 \right)+\frac{\beta}{2T}|y_0-y_1|^2.$$
    Since this is for an arbitrary $T$ we can choose $T=|y_0-y_1|$ in the previous expression to conclude the Lipschitz continuity of $V$ on compact subsets of $\R^2$.
\end{proof}
\begin{lemma}\label{Example:ExpStab}
    Let $\alpha\in (0,\infty)$ and $y_0\in\R^2$. Consider $u^{*}$ a solution of \eqref{Example:controlproblem} and $y^*$ its associated  trajectory. Then  $y^{*}$ is exponentially asymptotically stable:
    \beq
    |y^{*}(t)|\leq (1+\alpha)^{1/2}\exp\left(\frac{-1}{(1+\alpha)\sqrt{\beta}}t\right)|y_0|\mbox{ for all }t>0.
    \label{Example:ExpStab:expin}
    \eeq
\end{lemma}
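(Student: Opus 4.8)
The plan is to use the value function $V_\alpha$ itself as a Lyapunov function along the optimal trajectory. Let $(u^*,y^*)$ be an optimal pair for $y_0$ and set $g(t)=V_\alpha(y^*(t))$. By the dynamic programming principle \eqref{DynamicProgrammic} applied to \eqref{Example:controlproblem}, for every $t\ge 0$ one has $V_\alpha(y_0)\le \int_0^t\big(\ell_\alpha(y^*)+\frac{\beta}{2}|u^*|^2\big)\,ds+V_\alpha(y^*(t))$, and since $u^*$ is globally optimal this holds with equality (otherwise concatenating $u^*|_{[0,t]}$ with an optimal control for $y^*(t)$ would strictly improve on $u^*$). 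Hence $g(t)=\int_t^\infty\big(\ell_\alpha(y^*(s))+\frac{\beta}{2}|u^*(s)|^2\big)\,ds$, and because $y^*,u^*\in C^\infty([0,\infty);\R^2)$ by \Cref{Example:propOptimalityCond}, the integrand is continuous and $g$ is differentiable with $g'(t)=-\big(\ell_\alpha(y^*(t))+\frac{\beta}{2}|u^*(t)|^2\big)$.

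First I would invoke the pointwise identity \eqref{Example:trajHJB}, $\ell_\alpha(y^*(t))=\frac{\beta}{2}|u^*(t)|^2$, to rewrite this as $g'(t)=-2\ell_\alpha(y^*(t))$. Since $\psi\ge 0$ gives $\ell_\alpha(y)\ge \frac12|y|^2$, and the upper estimate in \eqref{Example:VB} (using $\frac{\alpha}{2}\le\alpha$) gives $V_\alpha(y)\le \frac{\sqrt{\beta}}{2}(1+\alpha)|y|^2$, I would bound
$$ g'(t)=-2\ell_\alpha(y^*(t))\le -|y^*(t)|^2\le -\frac{2}{\sqrt{\beta}(1+\alpha)}\,g(t). $$
Writing $c=\frac{2}{\sqrt{\beta}(1+\alpha)}$, Gr\"onwall's lemma then yields $g(t)\le e^{-ct}g(0)=e^{-ct}V_\alpha(y_0)$.

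Finally I would translate this back to $|y^*|$ using the two-sided bound \eqref{Example:VB} once more: the lower bound gives $\frac{\sqrt{\beta}}{2}|y^*(t)|^2\le g(t)$, while the upper bound gives $V_\alpha(y_0)\le \frac{\sqrt{\beta}}{2}(1+\alpha)|y_0|^2$, so that $\frac{\sqrt{\beta}}{2}|y^*(t)|^2\le e^{-ct}\frac{\sqrt{\beta}}{2}(1+\alpha)|y_0|^2$, i.e. $|y^*(t)|\le (1+\alpha)^{1/2}e^{-ct/2}|y_0|$. Since $c/2=\frac{1}{(1+\alpha)\sqrt{\beta}}$, this is exactly \eqref{Example:ExpStab:expin}. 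The one point requiring care, and the main obstacle, is the justification that $g(t)$ equals the tail cost and is therefore differentiable: as $V_\alpha$ is only locally Lipschitz (\Cref{Example:prop:Lipschitz}), I cannot differentiate $V_\alpha(y^*(t))$ through a chain rule, and must instead rely on the Bellman/optimality representation above, whose differentiability is secured by the continuity of the integrand, i.e. by the smoothness of $(u^*,y^*)$ from \Cref{Example:propOptimalityCond}.
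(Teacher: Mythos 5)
Your proposal is correct and follows essentially the same route as the paper's proof: differentiate $V_\alpha$ along the optimal trajectory via the dynamic programming principle, use \eqref{Example:trajHJB} to reduce to $\frac{d}{dt}V_\alpha\circ y^* = -2\ell_\alpha(y^*)$, and then combine $\ell_\alpha(y)\ge\frac12|y|^2$ with the two-sided bound \eqref{Example:VB} and Gr\"onwall. Your tail-cost representation of $V_\alpha(y^*(t))$ is simply a more explicit justification of the differentiability that the paper asserts from local Lipschitz continuity.
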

\begin{proof}
Due to the local Lipschitz continuity of $V_\alpha$ and the dynamic programming principle we have that $V_\alpha\circ y^*$ is differentiable for almost all $t>0$ and satisfies
\beq \frac{d}{d t}V_\alpha\circ y^*(t)+\ell_\alpha (y(t))+\frac{\beta}{2}|u^*(t)|^2=0 \mbox{ for almost all }t>0.\label{Example:ExpStab:proof:hjb}\eeq
Using \eqref{Example:trajHJB} in \eqref{Example:ExpStab:proof:hjb} we obtain
$$ \frac{d}{d t}V_\alpha\circ y^*(t)+2\ell_\alpha (y(t))=0.$$
Since $\ell_\alpha(y)\geq \frac{1}{2}|y|^{2}$ for all $y\in\R^2$ and using \eqref{Example:VB} in the previous equality, we get \eqref{Example:ExpStab:expin} which proves the asymptotic exponential stability of $y^{*}$.
\end{proof}

Below $\overline{co}\{\omega\}$ denotes the convex closure of a set $\omega$  in $\R^{2}$. Further for $v\in C(\omega)$ with $\omega\subset\R^{2}$ open, the set valued function $D^{+}v$ stands for  the super-differential of $v$ (see \cite[Chaper 2, Section 1]{Bardi1997}).

\begin{prop}
\label{Example:NecessaryDynamiProgCond}
    Let $y_0\in\R^{2}$, $\alpha>0$ and set $\mathcal{I}(y_0)=\overline{co}\{-\beta u^{*}(0):\ u^{*}\mbox{ is an optimal solution of }\eqref{Example:controlproblem}\}$.
     Then we have $D^{+}V_{\alpha}(y_0)=\mathcal{I}(y_0)$.
\end{prop}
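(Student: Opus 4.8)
The plan is to establish the two inclusions $\mathcal{I}(y_0)\subseteq D^{+}V_{\alpha}(y_0)$ and $D^{+}V_{\alpha}(y_0)\subseteq\mathcal{I}(y_0)$ separately. Since $V_{\alpha}$ is continuous, the superdifferential $D^{+}V_{\alpha}(y_0)$ is closed and convex, so for the first inclusion it suffices to show that every generator $-\beta u^{*}(0)$ lies in $D^{+}V_{\alpha}(y_0)$. As a guiding remark I would first record that, at any point of differentiability, the HJB equation for \eqref{Example:controlproblem} reads $\frac{1}{2\beta}|\nabla V_{\alpha}|^{2}-\ell_{\alpha}=0$ with optimal feedback $u=-\frac{1}{\beta}\nabla V_{\alpha}$, so that $-\beta u^{*}(0)=\nabla V_{\alpha}(y_0)$ along any optimal pair; this is what makes $\mathcal{I}(y_0)$ the natural candidate for $D^{+}V_{\alpha}(y_0)$.

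Both inclusions rest on the same \emph{splicing competitor}. Given a reference optimal pair $(u^{*},y^{*})$ with $y^{*}(0)=r$, a target initial point $x$, and a small time $\tau>0$, I would build an admissible control from $x$ by steering linearly from $x$ to $y^{*}(\tau)$ on $[0,\tau]$ with the constant control $\hat{u}=\frac{1}{\tau}(y^{*}(\tau)-x)$ (admissible since the dynamics is merely $y'=u$), and then following $u^{*}$ on $[\tau,\infty)$. Because the spliced trajectory eventually coincides with $y^{*}$, it still converges to $0$ and has finite cost. Using $y^{*}(\tau)=r+u^{*}(0)\tau+o(\tau)$ and the local Lipschitz continuity of $\ell_{\alpha}$, the cost of this competitor expands as $V_{\alpha}(r)-\beta u^{*}(0)\cdot(x-r)+\frac{\beta}{2\tau}|x-r|^{2}+O(\tau|x-r|)$. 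Choosing $\tau=|x-r|^{1/2}$ makes the last two terms $o(|x-r|)$. Applied with $r=y_0$ and $x=y_0+\xi$, this yields $V_{\alpha}(y_0+\xi)\le V_{\alpha}(y_0)-\beta u^{*}(0)\cdot\xi+o(|\xi|)$, i.e. $-\beta u^{*}(0)\in D^{+}V_{\alpha}(y_0)$, and the first inclusion follows by convexity and closedness.

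For the reverse inclusion I would argue by contradiction using a separating hyperplane: if some $p\in D^{+}V_{\alpha}(y_0)$ is not in the closed convex set $\mathcal{I}(y_0)$, there is a direction $\eta$ with $p\cdot\eta>\max_{q\in\mathcal{I}(y_0)}q\cdot\eta=:M$. The superdifferential property in the direction $-\eta$ gives $\limsup_{s\to0^{+}}\frac{V_{\alpha}(y_0-s\eta)-V_{\alpha}(y_0)}{s}\le-p\cdot\eta$. On the other hand, applying the splicing competitor with reference an optimal pair $(u_s,y_s)$ for the perturbed point $y_0-s\eta$, target $x=y_0$, and $\tau=\sqrt{s}$ produces the lower bound $\frac{V_{\alpha}(y_0-s\eta)-V_{\alpha}(y_0)}{s}\ge\beta u_s(0)\cdot\eta-o(1)$. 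The crux is then a stability statement: as $s\to0^{+}$ the initial values $u_s(0)$ stay bounded by \eqref{Example:LipscthizCond:u0bound} and, along subsequences, converge to $u^{*}(0)$ for some optimal control $u^{*}$ at $y_0$, so that every limit point of $-\beta u_s(0)$ lies in $\mathcal{I}(y_0)$ and hence $\limsup_{s}(-\beta u_s(0))\cdot\eta\le M$. Passing to the limit sandwiches $-M\le\liminf_{s}\frac{V_{\alpha}(y_0-s\eta)-V_{\alpha}(y_0)}{s}\le-p\cdot\eta$, whence $p\cdot\eta\le M$, contradicting the separation.

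The main obstacle is precisely this stability lemma, i.e. the upper-semicontinuous dependence of the set of optimal initial controls on the initial datum. I would prove it by the same compactness mechanism as in \Cref{ConvTheoTraj}: the controls $u_s$ are bounded in $L^{2}$ (since $\mathcal{V}$ is bounded) and $y_s(0)=y_0-s\eta\to y_0$, so a subsequence of $(u_s,y_s)$ converges, the limit is admissible from $y_0$, and lower semicontinuity of the cost together with $V_{\alpha}(y_0-s\eta)\to V_{\alpha}(y_0)$ (\Cref{Example:prop:Lipschitz}) forces the limit to be optimal; the pointwise convergence $u_s(0)\to u^{*}(0)$ then follows from continuous dependence for the smooth optimality system of \Cref{Example:propOptimalityCond}, the bounded initial costates $\beta u_s(0)$ yielding $C^{1}_{loc}$ convergence of the trajectories. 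The only remaining care is uniform control of the $o$-terms in the competitor expansion, which is routine given the smoothness of $\ell_{\alpha}$ and of the optimal trajectories.
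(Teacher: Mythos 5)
Your argument is correct in outline but follows a genuinely different route from the paper. The paper first uses the exponential stability (\Cref{Example:ExpStab}), the a priori bound on optimal controls, and \Cref{Example:DifZero} to rewrite $V_\alpha$ near $y_0$ as the marginal function of a \emph{finite-horizon} problem over the weakly compact ball $\overline B_{C_R}\subset H^1((0,T_R);\R^2)$ with terminal cost $V_0(y(T_R))$; after checking continuity of $J_R$ and of $\nabla_{y_0}J_R$ on $\R^2\times\overline B_{C_R}$, it invokes the general marginal-function theorem (Propositions 2.1 and 4.4 in \cite[Chapter 2]{Bardi1997}) to get $D^+V_\alpha(y_0)=\overline{co}\{\nabla_{y_0}J_R(y_0,u^*)\}$ in one stroke, and then identifies $\nabla_{y_0}J_R(y_0,u^*)=-\beta u^*(0)$ via \eqref{Example:OptimalityCond} and the transversality relation $\sqrt\beta u^*(T_R)=-y^*(T_R)$. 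You instead prove the two inclusions of that theorem from scratch in this special case: the splicing competitor gives the inclusion $\mathcal I(y_0)\subseteq D^+V_\alpha(y_0)$, and separation plus upper semicontinuity of the set of optimal initial controls gives the reverse. What the paper's route buys is that the delicate compactness/stability analysis is absorbed into the cited theorem (the weak $H^1$ topology on $\overline B_{C_R}$ doing the work your stability lemma does); what your route buys is self-containedness and no need to first truncate to a finite horizon. Your stability lemma is only sketched, but the sketch is along the same lines as the compactness arguments already used in \Cref{ConvTheoTraj} and \Cref{Example:lemma:nondif}, and the uniform bound \eqref{Example:LipscthizCond:u0bound} together with continuous dependence for the second-order system \eqref{Example:OptimalityCond} does yield $C^1_{loc}$ convergence of subsequences to an optimal pair at $y_0$, so it goes through.

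One small technical correction: in the expansion of the spliced cost the error term is $\frac{\beta}{2\tau}|x-r|^2+O(\tau|x-r|)+O(\tau^2)$, the $O(\tau^2)$ coming from $\frac{\beta}{2}\bigl(|u^*(0)|^2\tau-\int_0^\tau|u^*(t)|^2\,dt\bigr)$ and from $\int_0^\tau\bigl(\ell_\alpha(\hat y)-\ell_\alpha(y^*)\bigr)dt$. With your choice $\tau=|x-r|^{1/2}$ this contributes $O(|x-r|)$, which is not $o(|x-r|)$; take instead any $\tau$ with $|x-r|\ll\tau\ll|x-r|^{1/2}$, e.g.\ $\tau=|x-r|^{2/3}$, and both the superdifferential estimate and the lower bound $\frac{V_\alpha(y_0-s\eta)-V_\alpha(y_0)}{s}\geq\beta u_s(0)\cdot\eta-o(1)$ come out as claimed (the $o(1)$ being uniform in $s$ because $u_s(0)$ and $u_s'$ are uniformly bounded on compact time intervals).
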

\begin{rem}
     For our purpose, the relevance of  $D^{+}V_{\alpha}(y)$ relies on the fact that if $V_{\alpha}$ is differentiable at $y$, then $D^{+}V_{\alpha}(y_0)=\{\nabla V_{\alpha}(y_0)\}$ (see Lemma 1.8 in \cite[Chaper 2, Section 1]{Bardi1997}).
\end{rem}
\begin{proof}
Step 1.
Let $R>0$ be fixed. By \Cref{Example:propOptimalityCond} we know that there exists a constant $C_R>0$ such that for all $y_0\in \overline{B}(0,R)$ and all solutions $u^{*}\in L^{2}((0,\infty);\R^2)$ of \eqref{Example:controlproblem} we have
$$ \norm{u^*}_{H^1((0,\infty);\R^2)}\leq C_R.$$
Moreover, since $\overline{B}(0,R)$ is compact, \Cref{Example:ExpStab} implies  the existence of $T_R>0$ such that $y^{*}(t)\in B(0,|z|-\sigma)$ for all $t\geq T_{R}$,
where  $y^{*}$ is an optimal trajectory of \eqref{Example:controlproblem} with $y_0 \in \overline{B}(0,R)$.
Combining these statements, the dynamic programming principle, and \Cref{Example:DifZero}, we have that for all $y_0\in \overline{B}(0,R)$ the value function $V_{\alpha}$ is equal to
\begin{equation} \label{eq:aux2}
\min_{\begin{array}{c}
     u\in H^{1}((0,T_{R});\R^2),\  \norm{u}_{H^1((0,T_R);\R^2)}\leq C_R,  \\
     y'=u\mbox{ in }(0,T),\ y(0)=y_0,
\end{array}}\int_{0}^{T_R}\ell_{\alpha}(y(t))dt+\frac{\beta}{2}\int_{0}^{T_{R}}|u(t)|^2 dt+V_0(y(T_R)).
\end{equation}
Moreover there is a one-to-one relationship between solutions to \eqref{Example:controlproblem} and \eqref{eq:aux2}  by proper restriction of optimal controls of \eqref{Example:controlproblem} to $[0,T_R]$ respectively by extension.
Utilizing the fact that the optimal controls are different from zero inside $B(0,|z|-\sigma)$ it can be argued that for the optimal controls of \eqref{eq:aux2} the constraint is not active.  By the first order optimality conditions for \eqref{eq:aux2} we obtain that
$\sqrt\beta u^*(T_R)=-y^{*}(T_R)$ for each optimal control-state pair.

Step 2. To call upon a general result on the sensitivity of marginal  functions below, we now endow the set of feasible controls $\overline B_{C_R} = \{u\in H^{1}((0,T_{R});\R^2):  \norm{u}_{H^1((0,T_R);\R^2)}\leq C_R, \}$  of \eqref{eq:aux2} by the weak topology induced by $H^{1}((0,T_{R});\R^2)$.
 For $u\in H^{1}((0,T_{R});\R^2)$ and $y_0\in \overline{B}(0,R)$ we define the reduced cost-functional associated to \eqref{eq:aux2} by $J_{R}(y_0,u)=\int_{0}^{T_R}\ell_{\alpha}(y(t))dt+\frac{\beta}{2}\int_{0}^{T_{R}}|u(t)|^2 dt+V_0(y(T_R))$ where $y$ is the unique solution of $y'=u$ and $y(0)=y_0$.
 In the following we argue the continuity of $J_R:\R^2\times\overline B_{C_R} \to \R$, as well as the differentiability  with respect to $y_0$ and the continuity of the gradient considered on $\R^2\times\overline B_{C_R}$. For the continuity of $J_{R}$ let us consider $y_0^{n}$ converging to $y_0$ and $u_n$ converging to $u$ in the weak topology of $H^{1}((0,T_R);\R^2)$ with $\norm{u_n}_{H^1((0,T_R);\R^2)}\leq C_R$. By the compact embedding of $H^1((0,T_R);\R^2)$ in $C([0,T_R];\R^2)$ we have that passing to a sub-sequence $u_n$ converges to $u$ in $C([0,T_R];\R^2)$. This and the convergence of $y_0^n$ to $y_0$ implies that for  the same sub-sequence the states $y(u_n)$ converge to $y$ in  $C([0,T_R];\R^2)$, and that $y'=u$ and $y(0)=y_0$. Due to the fact that $\ell_{\alpha}$ is continuous, we get that $J_{R}(y_0^n,u_n)$ converges to $J_R(y_0,u)$ through this sub-sequence. Since this holds for each convergent sub-sequence we get that the whole sequence converges and hence $J_{R}$ is continuous. The existence of the gradient of $J_{R}$ with respect to $y_0$ is a direct consequence of the classic ODE theory and it is given by
\begin{equation}\label{eq:aux3} \nabla_{y_0} J_{R}(y_0,u)=\int_{0}^{T_{R}}\nabla \ell_\alpha(y(t))dt+\sqrt{\beta}y(T_R).
\end{equation}
The continuity of $\nabla_{y_0} J_{R}(y_0,u)$ with respect to $(y_0,u)$ can be argued similarly as the
continuity of $J_R$ by using the compact embedding of $H^1((0,T_R);\R^2)$ in $C([0,T_R];\R^2)$.

Step 3.  We are now in a position to  apply Proposition 4.4, and hence Proposition 2.1  in \cite[Chapter 2]{Bardi1997}, to the value function associated to \eqref{eq:aux2}, and hence to $V_\alpha$,   for each $y_0\in \overline B(0,R)$. This asserts that
$$
D^+V_\alpha(y_0) = \overline{co}(\{\nabla_{y_0} J_{R}(y_0,u^*):\ u^{*}\mbox{ is an optimal solution of }\eqref{Example:controlproblem}\}).
$$
Combining this fact, together with \eqref{eq:aux3}, \eqref{Example:OptimalityCond}, and $\sqrt\beta u^*(T_R)=-y^{*}(T_R)$ from Step 1, we obtain
$D^{+}V_{\alpha}(y_0)=\mathcal{I}(y_0)$.  Since $R>0$ was chosen arbitrarily, this equality holds for all $y_0 \in \R^2$.
\end{proof}

\begin{rem}
As a consequence of \eqref{eq:optsyst} the assertion of Proposition \ref{Example:NecessaryDynamiProgCond} can equivalently be expressed as $D^{+}V_{\alpha}(y_0)=\overline{co}\{-p^{*}(0):\ p^{*}\mbox{ is the adjoint state associated to  an optimal solution $u^*$ of }\eqref{Example:controlproblem}\}$.
\end{rem}

\begin{lemma}
\label{Example:lemma:Lyapunov}
For every $\alpha\in (0,\infty)$  we have $y\cdot \nabla V_{\alpha}(y)> 0$ for almost all $y\in \R^{2}\setminus \overline{B(z,\sigma)}$. Further, define $w:\R^{2}\to\R$  by
\beq w(y)=\left\{\begin{array}{ll}
    0 & \mbox{ if }|y|\leq R \\
    (|y|^2-R^2)^2 & \mbox{ if }|y|> R
\end{array}\right.\label{Example:lemma:Lyapunov:func}\eeq
with $R=|z|+\sigma$. Then $w$ satisfy \Cref{Lyapunov} for $\phi=V_{\alpha}$, with g=0,  and $\Omega$ any open bounded subset of $\R^{2}$ containing the ball $B(0,R)$ and $\omega\Subset\Omega$ an open set such that $B(0,R)\subset\omega$.
\end{lemma}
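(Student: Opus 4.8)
The plan is to first reduce Hypothesis~\ref{Lyapunov} to a pointwise sign condition on the gradient of $V_\alpha$, and then to establish that condition from the super-differential characterization of Proposition~\ref{Example:NecessaryDynamiProgCond}. For problem~\eqref{Example:controlproblem} one has $f\equiv 0$ and $B\equiv I$, so the feedback law is $u_\phi=-\frac1\beta\nabla\phi$, and the super-solution inequality of Hypothesis~\ref{Lyapunov} with $g=0$ becomes, for every $h\in C^1_{loc}(\Omega)$ such that $V_\alpha-h$ attains a local minimum at $\bar y$,
\[
-\tfrac1\beta\,\nabla w(\bar y)^\top\nabla h(\bar y)\le 0,\qquad\text{equivalently}\qquad \nabla w(\bar y)^\top\nabla h(\bar y)\ge 0.
\]
The requirement that $V_\alpha-h$ have a local minimum at $\bar y$ means precisely that $\nabla h(\bar y)$ belongs to the sub-differential $D^-V_\alpha(\bar y)$. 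A direct differentiation gives $\nabla w(y)=0$ for $|y|\le R$ and $\nabla w(y)=4(|y|^2-R^2)\,y$ for $|y|>R$; since $(|y|^2-R^2)^2$ has a double zero on $\{|y|=R\}$ the gradient matches from both sides, so $w\in C^1(\R^2)$, and $\nabla w(y)$ is a strictly positive multiple of $y$ whenever $|y|>R$.

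The core estimate is the sign of this gradient. By Proposition~\ref{Example:prop:Lipschitz} the function $V_\alpha$ is locally Lipschitz, hence differentiable almost everywhere by Rademacher's theorem, and $D^+V_\alpha(y)$ is nonempty for \emph{every} $y$ since an optimal control exists (Proposition~\ref{Example:propOptimalityCond}). For $y_0\notin\overline{B(z,\sigma)}$, every optimal trajectory tends to $0$ by Lemma~\ref{Example:ExpStab} and solves the second order system~\eqref{Example:OptimalityCond}, so Lemma~\ref{Example:lemma:Stability} yields $u^*(0)\cdot y_0<0$ for every optimal control $u^*$. Consequently each generator $-\beta u^*(0)$ of $\mathcal I(y_0)=D^+V_\alpha(y_0)$ (Proposition~\ref{Example:NecessaryDynamiProgCond}) satisfies $(-\beta u^*(0))\cdot y_0>0$, and this strict sign is preserved under convex combinations, so $q\cdot y_0>0$ for all $q\in D^+V_\alpha(y_0)$. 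At any point of differentiability $y\notin\overline{B(z,\sigma)}$ one has $D^+V_\alpha(y)=\{\nabla V_\alpha(y)\}$, whence $y\cdot\nabla V_\alpha(y)>0$; as differentiability holds almost everywhere, this proves the first assertion of the lemma.

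To verify the super-solution property I would argue pointwise on $\omega_\delta$. If $|\bar y|\le R$ then $\nabla w(\bar y)=0$ and the inequality is trivial. If $|\bar y|>R=|z|+\sigma$, then $\bar y\notin\overline{B(z,\sigma)}$. Should $D^-V_\alpha(\bar y)$ be empty there is no admissible test function and the condition holds vacuously; otherwise $D^-V_\alpha(\bar y)\neq\emptyset$ together with $D^+V_\alpha(\bar y)\neq\emptyset$ forces $V_\alpha$ to be differentiable at $\bar y$ with $D^-V_\alpha(\bar y)=\{\nabla V_\alpha(\bar y)\}$ (the standard fact that a continuous function is differentiable wherever both one-sided differentials are nonempty, cf.\ the remark following Proposition~\ref{Example:NecessaryDynamiProgCond}). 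Hence every test gradient equals $\nabla V_\alpha(\bar y)$ and
\[
\nabla w(\bar y)^\top\nabla h(\bar y)=4\big(|\bar y|^2-R^2\big)\,\bar y\cdot\nabla V_\alpha(\bar y)>0
\]
by the previous paragraph. This shows that $V_\alpha$ is a viscosity super-solution of the equation in Hypothesis~\ref{Lyapunov} with $g=0$.

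It remains to fix the geometry, which is routine thanks to the radial monotonicity of $w$. Since $w$ is radial and strictly increasing in $|y|$ for $|y|>R$, its sublevel sets $\{w<c\}$ are open balls centered at the origin. Writing $M=\sup_{\omega}w<\infty$, for small $\delta>0$ the set $\omega_\delta$ coincides with the ball $B\big(0,\sqrt{R^2+\sqrt{M+\delta}}\big)$, which contains $\overline\omega$ (so $\omega\Subset\omega_\delta$) and whose boundary is the circle $\{|y|=\sqrt{R^2+\sqrt{M+\delta}}\}$; this boundary is $C^1$ because $\nabla w\neq 0$ on it. Choosing $\delta$ small enough that this ball lies in $\Omega$ completes the verification. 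The main obstacle is conceptual rather than computational: the super-solution test is phrased through the sub-differential $D^-V_\alpha$, whereas the available structural information concerns the super-differential $D^+V_\alpha=\mathcal I$. The resolution is exactly the differentiability dichotomy used above, which collapses $D^-V_\alpha(\bar y)$ to the single gradient $\nabla V_\alpha(\bar y)$ wherever it is nonempty, thereby transferring the sign information from $D^+V_\alpha$ to the test functions.
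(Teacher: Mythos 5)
Your proof is correct and follows essentially the same route as the paper: almost-everywhere differentiability of $V_\alpha$ (Rademacher via Proposition~\ref{Example:prop:Lipschitz}), the identification $D^{+}V_{\alpha}(y_0)=\mathcal{I}(y_0)$ from Proposition~\ref{Example:NecessaryDynamiProgCond}, and the sign $u^{*}(0)\cdot y_0<0$ from Lemma~\ref{Example:lemma:Stability} to get $y\cdot\nabla V_\alpha(y)>0$ a.e.\ outside $\overline{B(z,\sigma)}$, followed by the observation that $\nabla w$ vanishes on $\overline{B(0,R)}$ and is a positive multiple of $y$ outside. The one place where you go beyond the paper is the verification of the viscosity super-solution inequality itself: the paper stops at the a.e.\ statement $\nabla V_\alpha(y)\cdot\nabla w(y)>0$, which strictly speaking only yields the pointwise-everywhere test-function inequality because the Hamiltonian $p\mapsto -\nabla w(y)^{\top}\bigl(f(y)-\tfrac{1}{\beta}B(y)B(y)^{\top}p\bigr)$ is affine (hence concave) in $p$, so a locally Lipschitz a.e.\ super-solution is a viscosity super-solution; you instead argue directly that wherever $D^{-}V_\alpha(\bar y)\neq\emptyset$ the nonemptiness of $D^{+}V_\alpha(\bar y)$ forces differentiability and collapses every admissible test gradient to $\nabla V_\alpha(\bar y)$. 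Both closures of this step are valid; yours is more self-contained and makes explicit the transfer of sign information from $D^{+}$ to the test functions, and your discussion of the sublevel sets of $w$ and the $C^1$ boundary of $\omega_\delta$ is likewise more careful than the paper's (which leaves the geometry implicit), at the mild price of adding the requirement that the relevant ball be contained in $\Omega$.
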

\begin{proof}
Let $y_0\in \R^{2}\setminus \overline{B(z,\sigma)}$ be such that $V_{\alpha}$ is differentiable at $y_0$. Then $D^{+}V_{\alpha}(y_0)=\{\nabla V_{\alpha}(y_0)\}$. This together with \Cref{Example:NecessaryDynamiProgCond} implies  that $u^{*}(0)=-\frac{1}{\beta}\nabla V_{\alpha}(y_0)$.  By Lemma \ref{Example:lemma:Stability} and \eqref{Example:OptimalityCond} we obtain that $u^{*}(0)\cdot y_0<0$. Therefore we have $\nabla V_{\alpha}(y_0)\cdot y_0>0$. This along with the fact that $V_{\alpha}$ is differentiable almost everywhere allow us to conclude that $\nabla V_{\alpha}(y)\cdot y>0$ for almost every $y\in \R^{2}\setminus \overline{B(z,\sigma)} $.

For the rest of the statement it is enough to notice that $\nabla w(y)=0$ in $\overline{B(0,R)}$ and $\nabla V_{\alpha}(y)\nabla w(y)>0$ for all $y\in \R^{2}\setminus \overline{B(0,R)}$ since $B(z,\sigma)\subset \overline{B(0,R)}$.
\end{proof}

Next we turn to proving the non-differentiability of $V_\alpha$. For this purpose we shall establish that there exists $\alpha^{*}$ such that for all $\alpha>\alpha^*$ there exists  $\hat y_{1,\alpha}\in (-\infty,z_1-\sigma)$ such that for each initial condition of the form $(y_1, 0)$ with $\hat y_1\leq y_{1, \alpha}$ there exists at least two optimal solutions of \eqref{Example:controlproblem}.  In the following we denote $e_{1}=(1,0)$ and $e_2=(0,1)$.
\begin{lemma}\label{Example:solucionEje}
    Let $y_{0,1}\in (-\infty,z_1-\sigma)$ and $\alpha\in (0,\infty)$. Assume that  $y^{*}\in H^{1}((0,\infty);\R^2)$  is an optimal trajectory of \eqref{Example:controlproblem} with $y_0=(y_{0,1},0)$. If $\frac{d}{dt}y^{*}(0)\cdot e_2=0$, then $y^{*}(t)=(y_1^{*}(t),0)$ for all $t>0$, with $y_1^{*}$ the unique solution of
    \beq  y_1'= -y_1\frac{1}{\sqrt{\beta}}\sqrt{\left(1+\alpha\psi\left( \frac{|y_1-z_1|}{\sigma}\right)\right)}, \, y_1(0)=y_{0,1}, \mbox{ for all }t>0,\label{Example:solucionEje:ode}\eeq
     and $y_1^{*}(t)<0$ for all $t\in (0,\infty)$.
\end{lemma}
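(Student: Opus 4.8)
The plan is to exploit the reflection symmetry of $\ell_\alpha$ about the $y_1$-axis to reduce the second order optimality system \eqref{Example:OptimalityCond} to a scalar problem, and then to read off the claimed first order equation from the relation \eqref{Example:trajHJB}.

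First I would record that $\ell_\alpha$ is even in the second coordinate: since $z_2=0$, both $|y|^2$ and $|y-z|$ depend on $y_2$ only through $y_2^2$, so $\ell_\alpha(y_1,y_2)=\ell_\alpha(y_1,-y_2)$ and hence $\partial_{y_2}\ell_\alpha(y_1,0)=0$ for every $y_1$. Writing \eqref{Example:OptimalityCond} as the first order system for $(y,y')$, the set $\{y_2=0,\ y_2'=0\}$ is therefore invariant, because on it $y_2'=0$ and the second component of $y''$ equals $\frac1\beta\partial_{y_2}\ell_\alpha(y_1,0)=0$. Here I would note that $\ell_\alpha\in C^\infty(\R^2)$ — as $\psi$ is even and smooth, $\psi(|\cdot-z|/\sigma)$ is a smooth function of $|\cdot-z|^2$, smooth even at $z$ — so $\nabla\ell_\alpha$ is locally Lipschitz and the associated initial value problem has a unique solution. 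As $y^*(0)=(y_{0,1},0)$ and, by hypothesis, $\frac{d}{dt}y^*(0)\cdot e_2=0$, the optimal trajectory starts in the invariant set, and uniqueness forces $y^*(t)=(y_1^*(t),0)$ for all $t>0$, with $y_1^*$ solving the scalar equation $(y_1^*)''=\frac1\beta\,\partial_{y_1}\ell_\alpha(y_1^*,0)$.

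Next I would turn \eqref{Example:trajHJB} into a first order relation. On the axis $|y^*|^2=(y_1^*)^2$, $|y^*-z|=|y_1^*-z_1|$ and $u^*=(y_1^*)'\,e_1$, so \eqref{Example:trajHJB} reads $\frac12 (y_1^*)^2\big(1+\alpha\psi(|y_1^*-z_1|/\sigma)\big)=\frac\beta2|(y_1^*)'|^2$, i.e. $|(y_1^*)'|=\frac{1}{\sqrt\beta}|y_1^*|\sqrt{1+\alpha\psi(|y_1^*-z_1|/\sigma)}$. I would then show $y_1^*$ never vanishes on $[0,\infty)$: if $y_1^*(t_0)=0$, this identity gives $(y_1^*)'(t_0)=0$, and since $\nabla\ell_\alpha(0)=0$ (the factor $|y|^2$ annihilates both terms at the origin) the scalar second order problem with data $(0,0)$ at $t_0$ has the unique solution $y_1^*\equiv 0$, contradicting $y_1^*(0)=y_{0,1}<0$. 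By continuity $y_1^*(t)<0$ for all $t\in(0,\infty)$.

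Finally I would fix the sign of the derivative and conclude. Because $y_1^*\neq0$ and $1+\alpha\psi\geq 1$, the right-hand side of the identity is strictly positive, so $(y_1^*)'$ is continuous and nowhere zero, hence of constant sign on $[0,\infty)$. \Cref{Example:ExpStab} gives $\lim_{t\to\infty}y_1^*(t)=0$; since $y_1^*(0)<0$ and $y_1^*$ stays negative, a constant negative sign of $(y_1^*)'$ is incompatible with this limit, so $(y_1^*)'>0$. Inserting $|y_1^*|=-y_1^*$ and $|(y_1^*)'|=(y_1^*)'$ into the identity yields exactly \eqref{Example:solucionEje:ode}, whose solution is unique because $y_1\mapsto -\frac{1}{\sqrt\beta}y_1\sqrt{1+\alpha\psi(|y_1-z_1|/\sigma)}$ is locally Lipschitz. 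The main obstacle is precisely this last sign analysis: the scalar reduction is routine once the evenness is observed, but converting the squared identity from \eqref{Example:trajHJB} into the signed equation \eqref{Example:solucionEje:ode} requires combining the non-vanishing of $y_1^*$ with the asymptotic decay, and one must check that the square-root nonlinearity remains Lipschitz so that \eqref{Example:solucionEje:ode} determines a unique trajectory.
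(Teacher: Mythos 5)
Your proof is correct, and while it rests on the same two pillars as the paper's --- the second order optimality system \eqref{Example:OptimalityCond} with its uniqueness, and the pointwise identity \eqref{Example:trajHJB} --- it organizes them differently. The paper works entirely at $t=0$: it invokes \Cref{Example:lemma:Stability} to get $\frac{d}{dt}y^*(0)\cdot e_1>0$, combines this with \eqref{Example:trajHJB} to pin down $\frac{d}{dt}y^*(0)$ exactly, and then observes that the axis curve $\hat y=(\hat y_1,0)$ built from the first order ODE \eqref{Example:solucionEje:ode} solves the second order system with the same initial data, so that a single application of uniqueness yields $\hat y=y^*$. You instead first reduce to the axis by the evenness of $\ell_\alpha$ in $y_2$ (which is exactly the invariance the paper exploits implicitly by constructing $\hat y$ on the axis), and then convert \eqref{Example:trajHJB} into the signed equation by a global argument: backward uniqueness at a hypothetical zero of $y_1^*$ rules out vanishing, and the decay $y^*(t)\to 0$ from \Cref{Example:ExpStab} forces $(y_1^*)'>0$ everywhere. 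What your route buys is that it dispenses with \Cref{Example:lemma:Stability} entirely (replacing it with the asymptotic decay) and shows directly that the first order ODE holds pointwise along the whole trajectory; what the paper's route buys is brevity, since matching initial data at $t=0$ makes the non-vanishing and global sign analysis unnecessary. Your auxiliary checks --- smoothness of $\psi(|\cdot-z|/\sigma)$ as a function of $|\cdot-z|^2$, $\nabla\ell_\alpha(0)=0$, and the local Lipschitz continuity of the right-hand side of \eqref{Example:solucionEje:ode} because $1+\alpha\psi\geq 1$ --- are all accurate, so there is no gap.
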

\begin{proof}
    Since $y^*$ satisfies \eqref{Example:OptimalityCond}, we know that $y^{*}\in C^{1}([0,\infty);\R^{2})$. Moreover, since $y^{*}\in H^{1}((0,\infty);\R^2)$ we can apply Lemma \ref{Example:lemma:Stability} to deduce that  $\frac{dy^{*}}{dt}(0)\cdot e_1> 0$. Combining this, \eqref{Example:trajHJB} and the continuity of $\frac{d}{dt}y^*$ at $0$ we deduce that $\frac{dy^{*}}{dt}(0)\cdot e_1=-y_{0,1}\frac{1}{\sqrt{\beta}}\sqrt{\left(1+\alpha\psi\left( \frac{|y_{0,1}-z_1|}{\sigma}\right)\right)}$. Consider $\hat{y}(t)=(\hat{y}_1(t),0)$ for $t>0$ with $\hat{y}_1(t)$
     the unique solution of \eqref{Example:solucionEje:ode}. Differentiating $\hat{y}$ twice and using \eqref{Example:solucionEje:ode} we get
     $$ \hat{y}''(t)=\frac{1}{\beta}\nabla \ell_{\alpha}(\hat{y}),\mbox{ for all }t>0, \ \hat{y}(0)=y_0, \ \hat{y}'(0)=\left(-y_{0,1}\frac{1}{\sqrt{\beta}}\sqrt{\left(1+\alpha\psi\left( \frac{|y_{0,1}-z_1|}{\sigma}\right)\right)},0\right)=\frac{dy^{*}}{dt}(0).$$
     Thus, by uniqueness $\hat{y}=y^{*}$ which concludes the proof.
\end{proof}
\begin{lemma}
\label{Example:lemma:nondif}
    There exists $\bar{\alpha}>0$ such that for each $\alpha>\bar{\alpha}$ there exists $\hat y_{\alpha,1} \in (-\infty,z_1-\sigma)$ such that for all $y_1<\hat y_{\alpha,1}$, each optimal trajectory $y^{*}$ associated to an optimal solution $u^*$ of \eqref{Example:controlproblem} with $y^*(0):=y_0=(y_1,0)$, there exists $t^{*}\in (0,\infty)$ fulfilling $y^{*}(t^*)\cdot e_2\neq 0$.
\end{lemma}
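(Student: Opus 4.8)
The plan is to recast the statement as the assertion that, for $\alpha$ large and $y_1$ negative enough, the \emph{on-axis} trajectory is strictly suboptimal. Indeed, suppose some optimal trajectory $y^{*}$ (which exists by \Cref{Example:propOptimalityCond}) satisfied $y^{*}(t)\cdot e_2=0$ for every $t\in(0,\infty)$. Then $y^{*}_2\equiv 0$, so in particular $\frac{d}{dt}y^{*}(0)\cdot e_2=0$, and Lemma~\ref{Example:solucionEje} forces $y^{*}(t)=(y^{*}_1(t),0)$ with $y^{*}_1$ the unique solution of \eqref{Example:solucionEje:ode}. Hence, once I show that this on-axis solution cannot be optimal, every optimal trajectory must satisfy $y^{*}(t^{*})\cdot e_2\neq 0$ for some $t^{*}$, which is the claim.

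First I would evaluate the cost of the on-axis trajectory in closed form. Since it solves the second order system and decays to $0$, identity \eqref{Example:trajHJB} holds along it, so its total cost is $\int_0^\infty 2\ell_\alpha(y^{*}(t))\,dt$. Using \eqref{Example:solucionEje:ode} to pass from the time variable $t$ to $s=y^{*}_1(t)\in(y_1,0)$ (note $-s>0$ there, and $ds=-\tfrac{s}{\sqrt\beta}\sqrt{1+\alpha\psi}\,dt$) gives
\begin{equation*}
J^{\mathrm{axis}}_\alpha(y_0)=\int_0^\infty 2\ell_\alpha(y^{*}(t))\,dt=\sqrt{\beta}\int_{y_1}^{0}(-s)\sqrt{1+\alpha\psi\!\left(\tfrac{|s-z_1|}{\sigma}\right)}\,ds=\frac{\sqrt{\beta}}{2}\,y_1^{2}+E_\alpha,
\end{equation*}
\begin{equation*}
E_\alpha:=\sqrt{\beta}\int_{z_1-\sigma}^{z_1+\sigma}(-s)\left(\sqrt{1+\alpha\psi\!\left(\tfrac{|s-z_1|}{\sigma}\right)}-1\right)ds .
\end{equation*}
Since $\psi$ is supported in $(z_1-\sigma,z_1+\sigma)$, the extra term $E_\alpha$ depends only on the bump and not on $y_1$; moreover $-s\ge -(z_1+\sigma)=|z_1|-\sigma>0$ on that interval and $\sqrt{1+\alpha\psi}-1$ grows like $\sqrt{\alpha\psi}$, so $E_\alpha\to\infty$ (of order $\sqrt{\alpha}$) as $\alpha\to\infty$. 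Recalling $V_0(y_0)=\frac{\sqrt\beta}{2}y_1^2$, we have $J^{\mathrm{axis}}_\alpha(y_0)=V_0(y_0)+E_\alpha$.

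Next I would bound $V_\alpha(y_0)$ from above by a competitor that avoids the bump, exploiting that along any curve disjoint from $\overline{B(z,\sigma)}$ one has $\ell_\alpha=\tfrac12|y|^2$, so its cost is independent of $\alpha$. Here the threshold enters: taking $\hat y_{\alpha,1}=z_1-2\sigma\in(-\infty,z_1-\sigma)$ and $y_1<\hat y_{\alpha,1}$ guarantees that the radial contraction $\gamma_0(t)=y_0e^{-t/\sqrt\beta}$ realizing $V_0$ passes through the fixed point $(z_1-2\sigma,0)$. I would follow $\gamma_0$ up to that point (cost $\le V_0(y_0)$, as a portion of the radial flow), then replace the bump-crossing piece by a fixed semicircular detour on $\partial B(z,2\sigma)$ to $(z_1+2\sigma,0)$, and finally contract radially to the origin (cost $\tfrac{\sqrt\beta}{2}(z_1+2\sigma)^2$). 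Because the problem \eqref{Example:controlproblem} is unconstrained, the control need only be $L^2$, so the detour arc — a bounded curve in a fixed neighborhood of $z$, attached at a point where $\gamma_0$ has a speed independent of $y_1$ — may be traversed in fixed time at finite cost $C_d$. This yields $V_\alpha(y_0)\le V_0(y_0)+C$ with $C=C_d+\tfrac{\sqrt\beta}{2}(z_1+2\sigma)^2$ depending only on $z,\sigma,\beta$.

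Combining the two estimates, I would fix $\bar\alpha$ so that $E_\alpha>C$ for all $\alpha>\bar\alpha$; then for every $y_1<\hat y_{\alpha,1}$,
\begin{equation*}
J^{\mathrm{axis}}_\alpha(y_0)=V_0(y_0)+E_\alpha>V_0(y_0)+C\ge V_\alpha(y_0),
\end{equation*}
so the on-axis trajectory is strictly suboptimal and the lemma follows by the reduction above. I expect the main obstacle to be the rigorous construction of the avoiding competitor together with the verification that its excess cost over $V_0$ is bounded \emph{uniformly in} $y_1$ and independently of $\alpha$; it is precisely the choice $\hat y_{\alpha,1}=z_1-2\sigma$ that fixes the attachment point of the detour on the radial flow and secures this uniformity. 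By contrast, the change of variables producing the formula for $E_\alpha$ and the growth bound $E_\alpha\gtrsim\sqrt{\alpha}$ are routine once \eqref{Example:trajHJB} is invoked.
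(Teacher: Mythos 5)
Your argument is correct, but it is genuinely different from the one in the paper. The paper proceeds by contradiction and compactness: it supposes the claim fails along sequences $\alpha_n\to\infty$, $y_{n,1}\to-\infty$, uses the dynamic programming principle to translate the on-axis optimal trajectories to a common initial point $(y_{0,1},0)$, obtains uniform $H^1\times L^2$ bounds by comparison with the state-constrained problem \eqref{Example:controlproblem:infinity}, extracts a weak limit which is optimal for that constrained problem and therefore must avoid $B(z,\sigma)$ — hence must reach distance $\sigma$ from the axis — contradicting the uniform convergence of the on-axis trajectories. You instead argue directly: the reduction via \Cref{Example:solucionEje} (an optimal trajectory confined to the axis must be the unique solution of \eqref{Example:solucionEje:ode}) is exactly right, the identity \eqref{Example:trajHJB} holds for that curve (either because it is assumed optimal within the contradiction, or immediately from $\tfrac{\beta}{2}|y_1'|^2=\ell_\alpha(y_1,0)$ pointwise along \eqref{Example:solucionEje:ode}), and the change of variables yields the exact cost $V_0(y_0)+E_\alpha$ with $E_\alpha\sim\sqrt\alpha$, while the semicircular detour on $\partial B(z,2\sigma)$ gives $V_\alpha(y_0)\le V_0(y_0)+C$ with $C$ independent of $y_1$ and $\alpha$ — all the individual verifications (monotonicity of $\hat y_1$, support of $\psi$, that each piece of the competitor avoids $\overline{B(z,\sigma)}$ so that $\ell_\alpha=\tfrac12|y|^2$ along it, and square-integrability of the concatenated control) check out. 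What your route buys is a fully quantitative statement: an explicit threshold $\hat y_{\alpha,1}=z_1-2\sigma$ that is uniform in $\alpha$, and an explicit $\bar\alpha$ determined by $E_\alpha>C$. What the paper's route buys is the by-product recorded in the remark following the lemma, namely the convergence of optimal solutions of \eqref{Example:controlproblem} to those of \eqref{Example:controlproblem:infinity} as $\alpha\to\infty$, which your comparison argument does not provide.
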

\begin{proof}
    By contradiction, let us assume  that the lemma does not hold. Then there exist sequences $\alpha_n>0$ and $y_{n,1}<z_1-\sigma$ satisfying $y_{n+1,1}\le y_{n,1}$ and
    $$ \lim _{n\to \infty}\alpha_n=\infty, \ \lim _{n\to \infty}y_{n,1}=-\infty$$
    and there exist optimal trajectories $y_n^{*}$ of \eqref{Example:controlproblem} with $y^*_n(0)=(y_{n,1},0)$ and $\alpha=\alpha_n$ such that $y_n^{*}(t)\cdot e_2=0$ for all $t>0$.  Since $y_{n,1}$ is monotone and diverges to minus infinity, we obtain that every $y_n^{*}$ coincides with $y_{0,1}$  at a time $t_{n}\in (0,\infty)$. Defining $\tilde{y}_{n}(t)=y_n^{*}(t+t_n), \tilde{u}_{n}(t)=u_n^{*}(t+t_n)$ we have by the dynamic programming principle that $(\tilde{y}_{n}, \tilde{u}_{n})$ is an optimal trajectory-control pair   of \eqref{Example:controlproblem} with  $\alpha=\alpha_n$ and $y_0=(y_{0,1},0)$. Let $u_\infty$  be an optimal solution of \eqref{Example:controlproblem:infinity}  with $y_\infty$ its associated trajectory. By the optimality of $\tilde{u}_{n}$ and the fact that $y_{\infty}(t)\notin B(z,\sigma)$ we have
    $$ \int_{0}^{\infty}\ell_{\alpha_n}(\tilde{y}_{n}(t))dt+\frac{\beta}{2}\int_{0}^{\infty}|\tilde{u}_{n}(t)|^2 dt \leq \int_{0}^{\infty}\frac{1}{2}|y_{\infty}(t)|^{2}dt+\frac{\beta}{2}\int_{0}^{\infty}|u_\infty(t)|^2 dt$$
    Since $\ell_{\alpha_n}(y)\geq \frac{1}{2}|y|^2$ for all $y\in\R^{2}$ and $\tilde{y}_n'=\tilde{u}_n$, the former inequality implies that $\{(\tilde{y}_n,  \tilde{u}_n)\}_{n=1}^\infty$ is  a bounded family in  $H^{1}((0,\infty);\R^{2})\times L^{2}((0,\infty);\R^{2})$. Passing to a sub-sequence if necessary, there exists $\tilde{u}\in L^{2}((0,\infty);\R^{2})$ and $\tilde{y}\in H^{1}((0,\infty);\R^{2})$ such that
    $$ \tilde{u}_n\weak \tilde{u} \mbox{ in }L^{2}((0,\infty);\R^{2}),\ \tilde{y}_n\weak \tilde{y} \mbox{ in }H^{1}((0,\infty);\R^{2}),$$
    and for all $T\in (0,\infty)$
    $$\tilde{y}_n\to \tilde{y} \mbox{ in }C([0,T];\R^{2}).$$
    By the lower semi-continuity of the $L^{2}((0,\infty);\R^{2})$ norm with respect to the weak topology of  $L^{2}((0,\infty);\R^{2})$ we get $$ \int_{0}^{\infty}\frac{1}{2}|\tilde{y}(t)|^{2}dt+\frac{\beta}{2}\int_{0}^{\infty}|\tilde{u
    }(t)|^2 dt \leq \int_{0}^{\infty}\frac{1}{2}|y_{\infty}(t)|^{2}dt+\frac{\beta}{2}\int_{0}^{\infty}|u_\infty(t)|^2 dt.$$
    Moreover by the definition of $\ell_\alpha$ we get for all $T\in (0,\infty)$
    \begin{equation*}\frac{1}{2}\int_{0}^{T}|\tilde{y}_n(t)|^2\psi\left(\frac{|\tilde y_n(t)-z|}{\sigma}\right) dt\leq\frac{1}{\alpha_n}\left(\int_{0}^{\infty}\frac{1}{2}|y_{\infty}(t)|^{2}dt+\frac{\beta}{2}\int_{0}^{\infty}|u_\infty(t)|^2 dt\right).\end{equation*}
      Utilizing the uniform convergence of $\tilde{y}_n$ one obtains that  for all $T\in (0,\infty)$ one can take the limit as $n$ goes to infinity to obtain
     $$ \int_{0}^{T}|\tilde{y}(t)|^2\psi\left(\frac{|\tilde{y}(t)-z|}{\sigma}\right) dt\leq 0,$$
    from where we deduce that $\psi\left(\frac{\tilde{y}(t)-z}{\sigma}\right) =0$ for almost all $t\in (0,T)$. Since the former holds for all $T>0$ and $B(z,\sigma)= \{y\in\R^{2}:\psi(y)\neq 0\}$ we get that $\tilde{y}(t)\notin B(z,\sigma)$ for all $t>0$. In particular this proves that $\tilde{u}$ is an optimal solution of \eqref{Example:controlproblem:infinity}. Since $\tilde{y}(t)\notin B(z,\sigma)$ for all $t>0$, there exists $\tilde{t}>0$ such that $dist(\tilde{y}(\tilde{t}),\R\times\{0\})\geq\sigma$. On the other hand, we have that $\tilde{y}_n(t)e_2=0$ for all $t>0$, which gives the desired  contradiction.
\end{proof}

\begin{rem}
With the technique of the proof to the previous lemma it can also be argued that every  sequence of solutions $u^*_{\alpha_n}$ to  \eqref{Example:controlproblem} with initial condition $y_0 \in \R^2\setminus B(z,\sigma)$ and
 $\lim_{n\to \infty} \alpha_n=\infty$,  contains a  convergent subsequence  and every such subsequence  converges to a solution of \eqref{Example:controlproblem:infinity}.
\end{rem}.

\begin{theo}
    Let  $\bar{\alpha}>0$ and $\hat y_{\alpha,1}$ be as in \Cref{Example:lemma:nondif}. Then every optimal solution $u^{*}\in L^{2}((0,\infty);\R^{2})$ of \eqref{Example:controlproblem} with $y_0=(y_{0,1},0)$ satisfying $y_{0,1}<\hat y_{\alpha,1}$ is such $u^{*}(0)\cdot e_2\neq 0$  and \eqref{Example:controlproblem} admits at least two solutions. Moreover  for  $\alpha>\bar{\alpha}$ the value function $V_\alpha$ is not differentiable in $(y_1, 0)$ for each $ y_1 \in (-\infty,\hat y_{\alpha,1})$.
\end{theo}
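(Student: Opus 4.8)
The plan is to combine the two preparatory lemmas on trajectories that stay on, respectively leave, the axis $\{y_2=0\}$ with the reflection symmetry of the problem and the super-differential characterization from \Cref{Example:NecessaryDynamiProgCond}.

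First I would prove that $u^{*}(0)\cdot e_2\neq 0$ for every optimal control $u^{*}$ with $y_0=(y_{0,1},0)$ and $y_{0,1}<\hat y_{\alpha,1}$. Since $y'=u$ and $(u^{*},y^{*})\in C^{\infty}$ by \Cref{Example:propOptimalityCond}, we have $u^{*}(0)=\frac{d}{dt}y^{*}(0)$. If $u^{*}(0)\cdot e_2=0$, then \Cref{Example:solucionEje} forces $y^{*}(t)=(y_1^{*}(t),0)$ for all $t>0$, so that $y^{*}(t)\cdot e_2\equiv 0$; but \Cref{Example:lemma:nondif} guarantees some $t^{*}\in(0,\infty)$ with $y^{*}(t^{*})\cdot e_2\neq 0$, a contradiction. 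Hence $u^{*}(0)\cdot e_2\neq 0$.

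Next I would exploit the reflection $R(y_1,y_2)=(y_1,-y_2)$. Because $z_2=0$ one checks $|Ry|=|y|$ and $|Ry-z|=|y-z|$, so $\ell_\alpha\circ R=\ell_\alpha$, while the control penalty and the dynamics $y'=u$ are equivariant under $R$. Consequently, whenever $(u^{*},y^{*})$ is optimal for $y_0=(y_{0,1},0)$, the pair $(Ru^{*},Ry^{*})$ is admissible for the same initial datum $Ry_0=y_0$ and has identical cost, hence is also optimal. Since $(Ru^{*})(0)=(u_1^{*}(0),-u_2^{*}(0))$ and $u_2^{*}(0)=u^{*}(0)\cdot e_2\neq 0$, the control $Ru^{*}$ differs from $u^{*}$, which yields at least two distinct optimal solutions.

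Finally I would read off non-differentiability from \Cref{Example:NecessaryDynamiProgCond}: the set $D^{+}V_{\alpha}(y_0)=\overline{co}\{-\beta u^{*}(0):u^{*}\text{ optimal}\}$ contains the two points $-\beta u^{*}(0)=(-\beta u_1^{*}(0),-\beta u_2^{*}(0))$ and $-\beta(Ru^{*})(0)=(-\beta u_1^{*}(0),\beta u_2^{*}(0))$, which are distinct precisely because $u_2^{*}(0)\neq 0$. Thus $D^{+}V_{\alpha}(y_0)$ is not a singleton, and by the remark following \Cref{Example:NecessaryDynamiProgCond} (differentiability at $y_0$ would force $D^{+}V_{\alpha}(y_0)=\{\nabla V_{\alpha}(y_0)\}$) the value function cannot be differentiable at $(y_1,0)$ for any $y_1\in(-\infty,\hat y_{\alpha,1})$. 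The genuinely hard work has already been carried out in \Cref{Example:lemma:nondif}; the only remaining obstacle is to verify cleanly the symmetry invariance of $\ell_\alpha$ and to use $u^{*}(0)\cdot e_2\neq 0$ to keep the two candidate super-gradients from collapsing.
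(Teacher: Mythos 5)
Your proposal is correct and follows essentially the same route as the paper: the contradiction via \Cref{Example:solucionEje} and \Cref{Example:lemma:nondif} to get $u^{*}(0)\cdot e_2\neq 0$, the reflection $\mathrm{diag}(1,-1)$ producing a second distinct optimal control, and the conclusion that $D^{+}V_{\alpha}(y_0)$ contains two distinct points via \Cref{Example:NecessaryDynamiProgCond}, so $V_\alpha$ cannot be differentiable there. The only difference is cosmetic: you make the symmetry verification $\ell_\alpha\circ R=\ell_\alpha$ (using $z_2=0$) slightly more explicit than the paper does.
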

\begin{proof}
    Let $\alpha>\bar{\alpha}$, $y_{0,1}<y_{\alpha,1}$ and $u^{*}\in L^{2}((0,\infty);\R^{2})$ be an optimal solution of \eqref{Example:controlproblem} with associated state $y^{*}\in H^{1}((0,\infty);\R^{2})$. By \eqref{Example:propOptimalityCond} we know that $u^{*},y^{*}\in C^{\infty}([0,\infty);\R^{2})$. By contradiction, let assume that $u^{*}(0)\cdot e_2= 0$. Then by \Cref{Example:solucionEje} we have  $y^{*}(t)\cdot  e_2=0$, for all $t>0$. Nevertheless, by \Cref{Example:lemma:nondif} there exists $\bar{t}>0$ such that $y^{*}(\bar{t})\cdot e_2\neq 0$ which is a contradiction. Hence  $u^{*}(0)\cdot e_2\neq 0$. Moreover, defining $\bar{u}\in L^{2}((0,\infty);\R^{2})$
    $$\bar{u}(t)=\left(\begin{array}{cc}
        1 & 0 \\
        0 & -1
    \end{array}\right)u^{*}(t),$$
    and $\bar{y}\in H^{1}((0,\infty);\R^{2})$ as the unique solution of $y'=\bar{u}$ with $y(0)=y_0$, we notice that $\ell_{\alpha}(\bar{y}(t))=\ell_{\alpha}(y^{*}(t))$ and $|\bar{u}(t)|^{2}=|u^{*}(t)|^{2}$. Consequently  $\bar{u}$ is an optimal solution of \eqref{Example:controlproblem} as well, different from $u^*$.

    If the value function $V_{\alpha}$ were differentiable at $y_0$ we would have that $D^{+} V_{\alpha}(y_0)$ would be a singleton. However, by \Cref{Example:NecessaryDynamiProgCond} we have that $-\beta\bar{u}(0)$ and $-\beta u^{*}(0)$ are contained in $ D^{+} V_{\alpha}(y_0)$ which is a contradiction to the fact that $\bar{u}(0)\neq u^{*}(0)$ and therefore the value function cannot be differentiable at $y_0$.
\end{proof}

\section{Concluding remarks}

In this work error bounds the convergence of feedback laws for infinite horizon problems with non-differentiable value functions were presented. These error bounds together with mollifications and the Moreau envelope permitted us to construct smooth sequences of feedback laws with the property that the associated value functions converge to the one of the original problem.
This result is important for both proving existence of solutions and convergence for the techniques developed in \cite{KuVaWal,KW1,KW2,KuVa,BokaWarinProst}, which train a feedback law by minimizing learning type cost functionals over a set of initial conditions. Further, \Cref{Corconv} says that the accumulation points of the controls obtained by this techniques are optimal controls for almost all initial condition.

As was seen, \Cref{Lyapunov} is important in order to ensure that the trajectories resulting from the feedback law approximation do not escape from the domain where the value function is approximated. Indeed, this hypothesis allowed us to estimate the escape time of the feedback from the approximation region depending on the regularity of the value function. In this respect, a stronger assumption on the stabilizability of the dynamics of \eqref{ControlProblem} could lead to strengthen the estimates in \Cref{Sec:StabilityEst}. For instance the, in the case of exponential stabilizability, the escape time of the trajectories resulting from the feedback law approximation is expected to be  infinity in a similar way as was done in \cite[Section 4, Proposition 1]{KuVaWal} for the smooth case.

The results of \Cref{Sec:ErrorEs} could also be applied to bound the error of feedback laws constructed from data driven approaches for the case of non-differentiable value function. However, in order to do this the constructed feedback laws must satisfy \eqref{ConvTheo2:eq0}, which of course will depend on the particularities of the method (e.g. neural network, polynomials, etc) and the control problem under study. Moreover, a stability assumptions like  \Cref{Lyapunov} needs to be satisfied by the closed loop problem obtained from the constructed feedback law. In this regards, it could be of interest to study under which conditions the results in  \Cref{Sec:ErrorEs} could be applied to prove the convergence of data driven approaches.

The example presented in section \Cref{Example} proves that even if all the data of the control problem are smooth,  the value function can be non-differentiable. On the other hand, for this example the value function was proved to be Lipschitz continuous and \Cref{Lyapunov} was also proved to hold. This underline the importance of the results presented in this work concerning the syntheses of smooth feedback laws. It is also worth to mention that  the non-uniqueness result is part due to the symmetry of both the objective function and the dynamics, which enable us to find a transformation of the optimal control which is still  optimal. Further, the particular characteristics of this problem made it  possible to demonstrate that the controls which are invariable under this transformation are not optimal solutions for $\alpha$ large enough. We expect that many interesting generalizations and modifications of this example are possible.

To conclude this work we mention some interesting extensions. We order them according to their apparent complexity and connection with this work. The error bound derived in \Cref{Sec:ErrorEs} and the escape time estimation in \Cref{Sec:StabilityEst} could be easily extend to the finite horizon case with some modification on the hypotheses. Additionally, the case of restrictions on the state could be treated by first approximating the problem by a penalty method and followed by the construction of a sequence of approximating feedback laws. Further, a key condition in this work was the availability of an expression of the feedback law as a function of the gradient of the value function. This is also the case for problems with convex restrictions on the control, where this expression is given by the projection onto the restrictions set. Nevertheless, since the projection on a closed convex set is just Lipschitz continuous, following the methods of the present work will lead to a Lipschitz sequence of feedback laws. Finally, since we proved error estimates in  \Cref{Sec:ErrorEs} with respect to $L_p$ norms with $p\in (1,\infty)$, we believe that it could be possible to apply similar techniques to analyze the case of control problems with discontinuous value functions, for instance problems with final cost and escape time.

\bibliographystyle{elsarticle-harv}
\bibliography{biblio}





\end{document}